\documentclass[12pt,a4paper,reqno]{amsart}
\setlength{\textwidth}{6.4truein}
\setlength{\oddsidemargin}{0truein}
\setlength{\evensidemargin}{0truein}

\usepackage{a4wide}
\usepackage[english,russian]{babel}
\usepackage[T2A]{fontenc}
\usepackage[cp1251]{inputenc} 
\usepackage{amsfonts}
\usepackage{amssymb, amsthm, amscd}
\usepackage{amsmath}
\usepackage{mathtools}
\usepackage{needspace}
\usepackage{etoolbox}
\usepackage{lipsum}
\usepackage{comment}
\usepackage{cmap}
\usepackage[pdftex]{graphicx}
\usepackage[unicode]{hyperref}
\usepackage[matrix,arrow,curve]{xy}
\usepackage[usenames,dvipsnames]{xcolor}
\usepackage{colortbl}
\usepackage{textcomp}
\usepackage{cite}
\usepackage{euscript}

\pagestyle{plain}

\DeclareMathOperator{\Lie}{Lie}

\sloppy

\newcommand{\Z}{\mathbb{Z}}

\newcommand{\N}{\mathbb{N}}
\newcommand{\R}{\mathbb{R}}

\newcommand{\F}{\mathbb{F}}
\newcommand{\M}{\mathfrak{M}}

\newcommand{\ovl}{\overline}

\newcommand{\eps}{\varepsilon}
\newcommand{\ph}{\varphi}

\newcommand{\sub}{\subseteq}
\newcommand{\Hom}{\mathop{\mathrm{Hom}}\nolimits}

\newcommand{\lev}{\mathop{\mathrm{lev}}\nolimits}

\newcommand{\Max}{\mathop{\mathrm{Max}}\nolimits}
\newcommand{\rk}{\mathop{\mathrm{rk}}\nolimits}

\renewcommand{\ge}{\geqslant}
\renewcommand{\le}{\leqslant}
\newcommand{\sm}{\setminus}
\newcommand{\map}[3]{#1\colon #2\to #3}
\newcommand{\phan}{\phantom{,}}

\newcommand{\ox}{\otimes}

\newcommand{\<}{\langle}
\renewcommand{\>}{\rangle}

\newcommand{\emp}{\varnothing}

\renewcommand{\P}{\EuScript{P}}
\newcommand{\euQ}{\EuScript{Q}}
\newcommand{\euL}{\EuScript{L}}
\newcommand\leftact[2]{\phan^{#1}#2}

\theoremstyle{plain}
\newtheorem{thm}{Theorem}
\newtheorem{lem}{Lemma}
\newtheorem{prop}{Proposition}

\theoremstyle{definition}

\newtheorem{cor}{Corollary}

\theoremstyle{remark}
\newtheorem*{rem}{Remark}
\newtheorem*{warn}{Warning}

\AtBeginEnvironment{thm}{\begin{samepage}}
\AtEndEnvironment{thm}{\end{samepage}}
\AtBeginEnvironment{lem}{\begin{samepage}}
\AtEndEnvironment{lem}{\end{samepage}}
\AtBeginEnvironment{st}{\begin{samepage}}
\AtEndEnvironment{st}{\end{samepage}}
\AtBeginEnvironment{crit}{\begin{samepage}}
\AtEndEnvironment{crit}{\end{samepage}}
\AtBeginEnvironment{ax}{\begin{samepage}}
\AtEndEnvironment{ax}{\end{samepage}}
\AtBeginEnvironment{defn}{\begin{samepage}}
\AtEndEnvironment{defn}{\end{samepage}}
\AtBeginEnvironment{cor}{\begin{samepage}}
\AtEndEnvironment{cor}{\end{samepage}}
\AtBeginEnvironment{note}{\begin{samepage}}
\AtEndEnvironment{note}{\end{samepage}}
\AtBeginEnvironment{prop}{\begin{samepage}}
\AtEndEnvironment{prop}{\end{samepage}}

\newcommand{\Stab}{\mathop{\mathrm{Stab}}\nolimits}
\newcommand{\SL}{\mathop{\mathrm{SL}}\nolimits}

\newcommand{\Tran}{\mathop{\mathrm{Tran}}\nolimits}

\newcommand{\D}{\mathop{\mathrm{D}}\nolimits}

\renewcommand{\tilde}{\widetilde}
\renewcommand{\hat}{\widehat}

\newcommand{\vpi}{\varpi}

\DeclareMathOperator{\SC}{sc}
\DeclareMathOperator{\gen}{gen}
\DeclareMathOperator{\FG}{FG}

\makeatletter
\def\@settitle{\begin{center}%
    \baselineskip14\p@\relax
    \bfseries
    \@title
  \end{center}%
}

\def\@evenhead{\hfil\sc p. gvozdevsky\hfil}
\def\@oddhead{\hfil\sc overgroups of subsystem subgroups \hfil}
\makeatother

\title{Overgroups of subsystem subgroups in exceptional groups: $2A_1$-proof}

\address{Chebyshev Laboratory,\\ St. Petersburg State \\ University, 
	14th Line V.O., 29, \\ Saint Petersburg 199178 Russia.}
\thanks{Research is supported by Russian Science Foundation grant (project \textnumero\,17-11-01261).}
\keywords{Chevalley groups, commutative rings, exceptional groups, subsystem subgroups, subgroup lattice}

\author{Pavel Gvozdevsky}
\date{}

\begin{document}
\selectlanguage{english}

\maketitle

\begin{abstract}
	In the present paper, a weak form of sandwich classification for the overgroups of the subsystem subgroup $E(\Delta,R)$ of the Chevalley group $G(\Phi,R)$ is proved in case where $\Phi$ is a simply laced root system and $\Delta$ is its sufficiently large subsystem. Namely it is shown that for any such an overgroup $H$ there exists a unique net of ideals $\sigma$ of the ring $R$ such that  $E(\Phi,\Delta,R,\sigma)\le H\le \Stab_{G(\Phi,R)}(L(\sigma))$, where $E(\Phi,\Delta,R,\sigma)$  is an elementary subgroup associated with the net and $L(\sigma)$ s a corresponding subalgebra of the Chevalley Lie algebra.  
\end{abstract}

\section{Introduction}

In the paper \cite{Aschbacher84}, devoted to the {\it maximal subgroup project}, Michael Aschbacher introduced eight classes $C_1$--$C_8$ of subgroups of finite simple classical groups. The groups from these classes are "obvious" maximal subgroups of a finite classical groups. To be precise, each subgroup from an Aschbacher class either is maximal itself or is contained in a maximal subgroup that in its turn either also belongs to an Aschbacher class or can be constructed by a certain explicit procedure.

Nikolai Vavilov defined five classes of "large" subgroups of the Chevalley groups (including exceptional ones) over arbitrary rings (see, for example, \cite{StepDiss} for details). Although these subgroups are not maximal, he conjectured that they are sufficiently large for the corresponding overgroup lattice to admit a description. One of these classes is the class of subsystem subgroups (the definition will be given in Subsection 2.1).

In the present paper, we study the overgroups of subsystem subgroups in exceptional groups. The fundamental difference of this paper from all the previous ones is that here we use subsystems of type $2A_1$ to extract root elements. In all the papers published before, for this purpose they used ir\-reduci\-ble subsystems of rank at least 2.

To put the results of the present paper in a context, we now recall main results that are known at the moment.

\begin{itemize}	
	\item In \cite{BV84,KoibaevBlockdiag} and also in some other papers, the overgroups of (elementary) subsystem subgroups in general linear group were studied. In this case, subsystem subgroups are the groups of block-diagonal matrices.
	
	\item Further in the thesis of Nikolai Vavilov (see, in particular, \cite{VavMIAN} and \cite{VavSplitOrt}), this results were generalised for the cases of orthogonal and symplectic groups assuming that $2\in R^*$. The full proofs were published later. After that in the thesis of Alexander Shegolev \cite{SchegDiss} this assumption was lifted and also the similar problem for unitary groups was solved (see also \cite{SchegMainResults} and \cite{SchegSymplectic}), which closes the case of classical groups almost entirely.
	
	\item The cases of general linear and unitary groups can be generalised to certain noncommutative rings, but these ring should satisfy some other condition (for example, to be quasifinite or PI). The papers \cite{GolubchikSubgroups,StepOLD} and \cite{VavStepSubgroupsGLStability} were devoted to such generalisations.
	
	\item It turns out that to describe overgroups of subsystem subgroups in exceptional groups  over a commutative ring (see, for example, \cite[Pro\-blem~7]{VavStepSurvey}) is a much harder problem. The first step in this direction is a paper \cite{VSch}. The table from that paper contains the list of pairs $(\Phi,\Delta)$  for which such a description may be possible in principle, along with the number of ideals determining the level and along with certain links between these ideals.
	
	\item In the paper \cite{GvozLevi}, author solved this problem for the subsystems $A_{l-1}\le D_l$,  $D_5\le E_6$ and $E_6\le E_7$. For the simply laced systems, these are exactly the cases where the subsystem subgroup is a Levi subgroup, and the corresponding unipotant radical is abelian.
	
	\item Finally, note that the result of the paper \cite{LuzF4E6}, which describes overgroups of $F_4$ in $E_6$, is not a special case of our problem, but is pretty close to it.
\end{itemize}

Now we recall how does the answer in the results mentioned above usually look like. 

Let $G$ be an abstract group, and let $\euL$ be a certain lattice of its subgroups.  The lattice $\euL$ admits sandwich classification if it is a disjoint union of "sand\-wi\-ches":
\begin{align*}
	\euL&=\bigsqcup_i L(F_i,N_i),\\
	L(F_i,N_i)&=\{H\colon F_i\le H\le N_i\},
\end{align*}
where $i$ runs through some index set, and $F_i$ i
is a normal subgroup of $N_i$ for all $i$. To study such a lattice, it suffices to study the quotients $N_i/F_i$. In \cite{VSch} it was conjectured that
the lattice of subgroups of a Chevalley group that contain a sufficiently large subsystem subgroup admits sandwich classification for certain $F_i$ and $N_i$. Such theorems are also called {\it the standard description}. 

However, (at least) in the cases where the subsystem has an irreducible component of type $A_1$, the conjectures in \cite{VSch} should be reformulated. The reason why they cannot be true as stated is that the elementary subgroup of $\SL(2,R)$ is not normal in general.

Therefore, the main result of the present paper is similar to sandwich clas\-si\-fi\-ca\-tion, but, unlike all the other papers, the subgroup $F_i$ is not normal in $N_i$ in general.

The paper \cite{VavGav} is devoted to the $A_2$-proof of the structure theorems. That is the proof that employs the elements of the form
$$
x_\alpha(\xi)x_\beta(\zeta), \,\mbox{ where }\, \angle(\alpha,\beta)=\pi/3,
$$
to get into a parabolic subgroup. After the proof of the main lemma in \cite{VavGav} there is a remark, which says that to get into a parabolic subgroup one can also use the elements of the form $x_\alpha(\xi)x_\beta(\zeta)$, where $\angle(\alpha,\beta)=\pi/2$. Our method of proof is partially pased on this remark. Such method should be called the $2A_1$-proof, and it allows us to study the overgroups of subsystem subgroups even if the subsystem has type $nA_1$.

I am grateful to my teacher Nikolai Vavilov for setting the problem and for extremely
helpful suggestions.

\section{Basic notation}

\subsection{Root systems and Chevalley groups}

Let $\Phi$ be an irreducible root system in the sense of \cite{Bourbaki4-6}, $\P$ a lattice that is intermediate between the root lattice $\euQ(\Phi)$ and the weight lattice $\P(\Phi)$, $R$ a commutative associative ring with unity, $G(\Phi,R)=G_\P(\Phi,R)$ a Chevalley group of type $\Phi$ over $R$, $T(\Phi,R)=T_\P(\Phi,R)$ a split maximal torus of $G(\Phi,R)$. For every root $\alpha\in\Phi$ we denote by $X_\alpha=\{x_\alpha(\xi),\colon \xi\in R\}$ the corresponding unipotent root subgroup with respect to $T$. We denote by $E(\Phi,R)=E_\P(\Phi,R)$ the elementary subgroup generated by all $X_\alpha$, $\alpha\in\Phi$. 

In the rest of the papaer we always assume that $\Phi$ is simply laced.

Let $\Delta$ be a subsystem of $\Phi$. We denote by $E(\Delta,R)$ the subgroup of $G(\Phi,R)$, generated by all~$X_\alpha$, where $\alpha\in \Delta$. It is called an (elementary) {\it subsystem subgroup}. It can be shown that it is an elementary subgroup of a Chevalley group $G(\Delta,R)$, embedded into the group $G(\Phi,R)$. Here the lattice between $\euQ(\Delta)$ and $\P(\Delta)$ is an orthogonal projection of $\P$ onto the corresponding subspace.

We denote by $W(\Phi)$ resp. $W(\Delta)$ the Weyl groups of the systems $\Phi$ resp. $\Delta$.

We are going to describe intermediate subgroups between $E(\Delta,R)$ and $G(\Phi,R)$. The pair $(\Phi,\Delta)$ here should satisfy certain combinatorial condition, which we formulate in Subsection \ref{CombCondition}.

\subsection{Affine schemes}
The functor $G(\Phi,-)$ from the category of rings to the category of groups is an affine group scheme (a Chevalley--Demazure scheme). This means that its composition with the forgetful functor to the category of sets is representable, i.e.,
$$
G(\Phi,R)=\Hom (\Z[G],R).
$$
The ring $\Z[G]$ here is called the {\it ring of regular functions} on the scheme $G(\Phi,-)$. 

An element $g_{\gen}\in G(\Phi,\Z[G])$ that corresponds to the identity ring ho\-mo\-mo\-rphism is called the {\it generic element} of the scheme $G(\Phi,-)$. This element has a universal property: for any ring $R$ and for any $g\in G(\Phi,R)$, there exists a unique ring homomorphism
$$
\map{f}{\Z[G]}{R}
$$
such that $f_*(g_{\gen})=g$. For details about application of the method of generic elements
to the problems similar to that of ours, see the paper of Alexei Ste\-pa\-nov~\cite{StepUniloc}.

\subsection{Reduction homomorphism and congruence subgroups}

For a ring $R$ and an ideal $I\unlhd R$, we denote by $\rho_I$ the projection onto the quotient ring $R\to R/I$. The same notation stands for the reduction homomorphism $G(\Phi,R)\to G(\Phi,R/I)$ induced by this projection.

The kernel of this homomorphism is called the {\it principal congruence subgroup} of level $I$, and we denote it by $G(\Phi,R,I)$.

\subsection{Parabolic subgroups}

Let $\alpha_1,\alpha_2\in \Phi$ and $\alpha_1\perp\alpha_2$. We introduce a notation for the following linear functional on the span of $\Phi$:  
$$
\vpi_{\alpha_1,\alpha_2}(\gamma)=(\alpha_1+\alpha_2,\gamma).
$$
Since $\Phi$ is simply laced, it follows that for any $\gamma\in \Phi$ we have $\vpi_{\alpha_1,\alpha_2}(\gamma)\in\{-2,-1,0,1,2\}$. Note that the values $\pm 1$ may fail to occur. The set of all roots such that the value of $\vpi_{\alpha_1,\alpha_2}$ on them is nonnegative is a parabolic set. We denote by $P_{\alpha_1,\alpha_2}$ the corresponding parabolic subgroup. 

Further we introduce the following notation:
\begin{align*}
	&\Sigma_{\alpha_1,\alpha_2}=\{\gamma\in\Phi\colon\vpi_{\alpha_1,\alpha_2}(\gamma)=2\},\\
	&U'_{\alpha_1,\alpha_2}=\<x_{\gamma}(\xi)\colon\gamma\in\Sigma_{\alpha_1,\alpha_2},\, \xi\in R\>\le P_{\alpha_1,\alpha_2}.
\end{align*}
In other words, the group $U'_{\alpha_1,\alpha_2}$ is a unipotent radical of $P_{\alpha_1,\alpha_2}$ if the functional $\vpi_{\alpha_1,\alpha_2}$ gives 3-grading on our root system (that is the values $1$ and $-1$ do not occur on roots). If the functional $\vpi_{\alpha_1,\alpha_2}$ gives a non-degenerate 5-grading, then the group $U'_{\alpha_1,\alpha_2}$ is a derived subgroup of the unipotent radical.

\subsection{Group theoretic notation}
\begin{itemize}
	\item Recall that for abstract groups $A,B\le G$,  the transporter from $A$ to $B$ is the set
	$$
	\Tran_G(A,B)=\{g\in G\colon gAg^{-1}\le B\}.
	$$
	
	\item If the group $G$ acts on the set  $X$, $x\in X$ and $Y\sub X$, we denote by $\Stab_G(x)$ resp. $\Stab_G(Y)$ the stabiliser
	of the element~$x$ resp. the stabiliser of the subset $Y$ (as a subset, not pointwise).
	
	\item Commutators are left normalised:
	$$
	[x,y]=xyx^{-1}y^{-1}.
	$$
	
	\item Upper index stands for the left or right conjugation: 
	$$
	\leftact{g}{h}=ghg^{-1},\quad h^g=g^{-1}hg.
	$$
	
	\item If $X$ is a subset of the group $G$, we denote by $\<X\>$ the subgroup generated by $X$.
	
\end{itemize}

\subsection{Nets of ideals}

A collection of ideals $\sigma=\{\sigma_\alpha\}_{\alpha\in\Phi}$ pf the ring $R$ is called a {\it net of ideals}, if the following conditions are fulfilled.
\begin{enumerate}
	\item If $\alpha, \beta,\alpha+\beta\in\Phi$, then $\sigma_\alpha\sigma_\beta\sub \sigma_{\alpha+\beta}$.
	
	\item if $\alpha\in\Delta$, then $\sigma_\alpha=R$.
\end{enumerate}

When we are considering nets of ideals, we will always assume that $\Delta^\perp=\emp$; in other words, the system $\Phi$ has no roots that are orthogonal to $\Delta$. Assuming that, for every overgroup
$$
E(\Delta,R)\le H\le G(\Phi,R)
$$
we define net of ideals $\lev(H)$, which is called the {\it level} of $H$, as follows:
$$
\lev(H)_\alpha=\{\xi\in R\colon x_\alpha(\xi)\in H\}.
$$
From the paper \cite{VSch}, it follows that this collection of sets is a net of ideals. In addition, in this paper the following observation was made.

\begin{lem}
	\label{Weylorbits} If $\sigma$ is a net of ideals, then the ideal $\sigma_\alpha$ depends not on the root~$\alpha$ itself, but only on its orbit under the action of the Weyl group $W(\Delta)$.
\end{lem}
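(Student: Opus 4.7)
The plan is to reduce the claim to the invariance of $\sigma$ under the simple reflections that generate $W(\Delta)$, and then to verify this invariance by a short case analysis using the two defining properties of a net. Pick a system of simple roots of $\Delta$; since $W(\Delta)$ is generated by the reflections $s_\beta$ with $\beta$ simple (in fact with $\beta$ any root of $\Delta$), it suffices to prove that $\sigma_{s_\beta\alpha}=\sigma_\alpha$ for every $\alpha\in\Phi$ and every $\beta\in\Delta$. Note that $s_\beta$ preserves $\Phi$, so $s_\beta\alpha$ is again a root of $\Phi$ and the statement makes sense.

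Set $n=\<\alpha,\beta^\vee\>$, so that $s_\beta\alpha=\alpha-n\beta$. Since $\Phi$ is simply laced, $n\in\{0,\pm 1,\pm 2\}$, and $n=\pm 2$ happens only when $\alpha=\pm\beta$. I would split:
\begin{itemize}
    \item If $n=0$, then $s_\beta\alpha=\alpha$ and there is nothing to prove.
    \item If $\alpha=\pm\beta$, both $\alpha$ and $s_\beta\alpha=\mp\beta$ lie in $\Delta$, so $\sigma_\alpha=\sigma_{s_\beta\alpha}=R$ by axiom (2).
    \item If $n=1$, the three vectors $\alpha$, $-\beta$, $\alpha-\beta$ are all roots of $\Phi$, so axiom (1) gives $\sigma_\alpha\sigma_{-\beta}\subseteq\sigma_{\alpha-\beta}$. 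But $-\beta\in\Delta$, hence $\sigma_{-\beta}=R$ and we obtain $\sigma_\alpha\subseteq\sigma_{s_\beta\alpha}$. Applying the same argument to the triple $\alpha-\beta$, $\beta$, $\alpha$ yields the reverse inclusion.
    \item The case $n=-1$ is symmetric, with $\beta$ and $-\beta$ interchanged.
\end{itemize}
In every case $\sigma_\alpha=\sigma_{s_\beta\alpha}$, and iterating along a reduced expression in $W(\Delta)$ yields $\sigma_\alpha=\sigma_{w\alpha}$ for all $w\in W(\Delta)$.

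The only step requiring any thought is the case $n=\pm 1$, but it is not really an obstacle: the key observation is that property (2) collapses property (1) into monotonicity under addition of roots of $\Delta$, and applying this both to $\alpha\mapsto \alpha-\beta$ and to its inverse $\alpha-\beta\mapsto\alpha$ gives the two-sided inclusion needed for equality.
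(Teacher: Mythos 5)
Your proof is correct. The paper itself does not prove this lemma but only cites it from \cite{VSch}, and your argument is the standard one that reference would supply: reduce to a single reflection $s_\beta$, $\beta\in\Delta$, and use axiom (2) to turn axiom (1) into the two inclusions $\sigma_\alpha\subseteq\sigma_{\alpha\mp\beta}$ and back, with the degenerate cases $n=0$ and $\alpha=\pm\beta$ handled separately.
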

\subsection{Lie algebras}
We denote by $L(\Phi,\Z) $ the integer span of the Chevalley basis in the complex Lie algebra of type $\Phi$ (see~\cite{Humphreys}). The following notation stands for Chevalley Lie algebra 
$$
L(\Phi,R)=L(\Phi,\Z)\ox_\Z R.
$$
This is a Lie algebra over the ring $R$ equipped with an action of the group $G(\Phi,R)$ called the adjoint representation. For elements $g\in G(\Phi,R)$ and $v\in L(\Phi,R)$, we denote this action by $\leftact{g}{v}$.

Note that the algebra $L(\Phi,R)$, in general, is not isomorphic to the tangent Lie algebra of the algebraic group $G(\Phi,R)$ (see, for example, \cite{RoozemondDiss}). However, if the group is simply connected, then these algebras are canonically isomorphic.

\smallskip

We denote by $e_\alpha$, $\alpha\in\Phi$ and $h_i$, $i=1,\ldots,\rk\Phi$ the Chevalley basis of the Lie algebra $L(\Phi,R)$. By $D$ we denote its toric subalgebra generated by~$h_i$.

For every element $v\in L(\Phi,R)$, we denote by $v^\alpha$ and $v^i$ its coefficient in Chevalley basis. 

Lie bracket is denoted by $[\,\cdot\,,\,\cdot\,]$. The same notation stands for the group commutator, but it should be clear from the context whether the calculation are in a group or in a algebra. 

For every net of ideals $\sigma$, we set
$$
L(\sigma)=D\oplus\bigoplus_{\alpha\in\Phi}\sigma_\alpha e_\alpha\le L(\Phi,R).
$$

The following lemma follows immediately from the definition of a net.

\begin{lem}
	\label{lie} 
	The submodule $L(\sigma)$ is a Lie subalgebra in $L(\Phi,R)$.
\end{lem}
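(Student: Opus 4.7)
The plan is to verify closure under the Lie bracket directly, using the Chevalley commutator relations and the two defining properties of a net. Since $L(\sigma)$ is an $R$-submodule generated by $D$ together with the elements $\xi e_\alpha$ for $\xi \in \sigma_\alpha$, $\alpha \in \Phi$, it suffices to check that each bracket of two such generators lies in $L(\sigma)$.

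I would split the verification into the following cases. First, $[D,D]=0\subseteq L(\sigma)$ since the toric subalgebra is abelian. Second, for $h\in D$ and $\xi e_\alpha$ with $\xi\in\sigma_\alpha$, the relation $[h,e_\alpha]=\alpha(h)e_\alpha$ gives $[h,\xi e_\alpha]=\xi\alpha(h)\,e_\alpha\in\sigma_\alpha e_\alpha\subseteq L(\sigma)$. Third, for two root generators $\xi e_\alpha$ and $\zeta e_\beta$ with $\xi\in\sigma_\alpha$, $\zeta\in\sigma_\beta$, I would subdivide according to the three possibilities for $\alpha+\beta$: (a) if $\beta=-\alpha$, then $[e_\alpha,e_{-\alpha}]=h_\alpha\in D$, so $[\xi e_\alpha,\zeta e_{-\alpha}]=\xi\zeta h_\alpha\in D\subseteq L(\sigma)$; (b) if $\alpha+\beta\notin\Phi\cup\{0\}$, the bracket vanishes and there is nothing to check; (c) if $\alpha+\beta\in\Phi$, then $[e_\alpha,e_\beta]=N_{\alpha,\beta}e_{\alpha+\beta}$ with $N_{\alpha,\beta}\in\{\pm1\}$ (here we use that $\Phi$ is simply laced, as assumed globally), so $[\xi e_\alpha,\zeta e_\beta]=N_{\alpha,\beta}\,\xi\zeta\,e_{\alpha+\beta}$, and the product $\xi\zeta$ lies in $\sigma_\alpha\sigma_\beta\subseteq\sigma_{\alpha+\beta}$ by condition (1) of the definition of a net. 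Hence the bracket is in $\sigma_{\alpha+\beta}e_{\alpha+\beta}\subseteq L(\sigma)$.

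There is no real obstacle here; the only thing to keep in mind is that condition (2) in the definition of a net (namely $\sigma_\alpha=R$ for $\alpha\in\Delta$) is not used at this point — the lemma is purely a consequence of the multiplicative condition (1), which is exactly what controls the nontrivial case (c). The simply laced assumption is used only to avoid the $\pm p$-coefficients that would appear in the structure constants for multiply laced systems; otherwise one would need to track divisibility of $N_{\alpha,\beta}$ by the characteristic. Once the four bracket cases above are checked, bilinearity of $[\,\cdot\,,\,\cdot\,]$ in $R$ extends the verification from generators to arbitrary elements of $L(\sigma)$, completing the proof.
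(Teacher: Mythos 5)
Your proof is correct and is precisely the routine verification the paper has in mind: the paper gives no explicit argument, stating only that the lemma ``follows immediately from the definition of a net,'' and your case analysis on generators --- with the multiplicative condition $\sigma_\alpha\sigma_\beta\subseteq\sigma_{\alpha+\beta}$ handling the only nontrivial case --- is exactly that immediate check spelled out.
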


\medskip

Now, we set
$$
L'(\sigma)=\<\{\xi e_\alpha\colon \alpha\in\Phi,\; \xi\in\sigma_\alpha\}\>\le L(\sigma),
$$
where the angle brackets mean generation as a Lie subalgebra. We mention two observations.

\begin{lem}
	\label{L'} Let $\sigma$ be a net of ideals. Then
	\begin{enumerate}
		\item $[L(\sigma),L(\sigma)]\le L'(\sigma);$
		\item $L(\sigma)=\{v\in L(\Phi,R)\colon [v,L'(\sigma)]\le L(\sigma) \}$.
	\end{enumerate}
\end{lem}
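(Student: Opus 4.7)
The plan is to handle the two parts in order, since (2) will use (1).

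For (1) the reduction is to generators of $L(\sigma)$. As a module it is generated by $\{h_i\}$ together with $\{\xi e_\alpha : \xi\in\sigma_\alpha\}$, so by bilinearity of the bracket it is enough to check three cases. Toric-toric brackets vanish; brackets of two scaled root vectors lie in $L'(\sigma)$ by its very definition; and the mixed bracket $[h_i,\xi e_\alpha]=\alpha(h_i)\,\xi e_\alpha$ lies in $L'(\sigma)$ because $\alpha(h_i)\in\Z$ and $\sigma_\alpha$ is an ideal, so $\alpha(h_i)\xi\in\sigma_\alpha$ and the result is itself one of the generators of $L'(\sigma)$.

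For (2) the inclusion $\supseteq$ is immediate from (1) combined with $L'(\sigma)\le L(\sigma)$: if $v\in L(\sigma)$ then $[v,L'(\sigma)]\le [L(\sigma),L(\sigma)]\le L'(\sigma)\le L(\sigma)$. The substantive direction is $\subseteq$. Writing $v=\sum_i c_i h_i+\sum_\alpha v^\alpha e_\alpha$, the toric part is automatically in $D\le L(\sigma)$, so my task is to show $v^\alpha\in\sigma_\alpha$ for every $\alpha\in\Phi$. When $\alpha\in\Delta$ this is vacuous because $\sigma_\alpha=R$. For $\alpha\notin\Delta$ the plan is a one-bracket reflection trick: the standing assumption $\Delta^\perp=\emp$ supplies $\delta\in\Delta$ with $(\alpha,\delta)\ne 0$; since $\pm\delta\in\Delta$ while $\alpha\notin\Delta$, we have $\alpha\ne\pm\delta$, and the simply-laced hypothesis then forces $(\alpha,\delta)=\pm 1$. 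The element $h_\delta=\pm[e_\delta,e_{-\delta}]$ lies in $L'(\sigma)$, because both $e_{\pm\delta}$ are generators of $L'(\sigma)$ (we have $\sigma_{\pm\delta}=R$), so $[v,h_\delta]\in L(\sigma)$. Reading off the $e_\alpha$-coefficient of $[v,h_\delta]=-\sum_\beta (\beta,\delta)\,v^\beta e_\beta$ gives $\mp v^\alpha\in\sigma_\alpha$, which is exactly what is needed.

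I do not foresee a real obstacle: part (1) is a three-case mechanical check, and the only non-trivial point in (2) is the manufacture of $h_\delta$ inside $L'(\sigma)$, which is precisely what the assumption $\Delta^\perp=\emp$ was put in place to guarantee. The simply-laced hypothesis enters only to ensure that the Cartan eigenvalue $(\alpha,\delta)$ is a unit $\pm 1$ rather than a non-invertible integer, so that $v^\alpha$ can be isolated without any divisibility loss.
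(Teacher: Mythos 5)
Your proof is correct, but for part (2) you take a genuinely different route from the paper. The paper picks, for $\gamma\in\Phi\sm\Delta$, a root $\alpha\in\Delta$ with $(\alpha,\gamma)=-1$, brackets $v$ with the generator $e_\alpha\in L'(\sigma)$, reads off the coefficient at $e_{\alpha+\gamma}$ (which is $\pm v^\gamma$), and then invokes Lemma \ref{Weylorbits} to identify $\sigma_{\alpha+\gamma}$ with $\sigma_\gamma$. You instead manufacture the coroot element $h_\delta=[e_\delta,e_{-\delta}]\in L'(\sigma)$ for some $\delta\in\Delta$ not orthogonal to $\alpha$ and read the $e_\alpha$-coefficient of $[v,h_\delta]$ directly; since the Cartan integer $\alpha(h_\delta)$ is $\pm1$ in the simply laced case, this isolates $v^\alpha$ with no shift of root and hence no appeal to Lemma \ref{Weylorbits}. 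Your version is marginally more self-contained (it only uses $\Delta^\perp=\emp$ and the simply laced hypothesis, both standing assumptions), whereas the paper's one-bracket argument stays entirely inside the root part of the algebra and leans on the $W(\Delta)$-invariance of the net; both are sound. Two cosmetic remarks: your explicit three-case verification of part (1) is a welcome expansion of the paper's ``obvious,'' and you have swapped the labels $\subseteq$ and $\supseteq$ for the two inclusions in (2) relative to the equality as written, though you do prove both directions correctly.
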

\begin{proof}
	The first statement is obvious. In the second statement the inclusion of the left-hand side into the right-hand side is also obvious, we prove the inverse inclusion. Let $v$ be an element from the right-hand side, and $\gamma\in\Phi$. We should prove that $v^\gamma$ is in $\sigma_\gamma$. If $\gamma\in \Delta$, then there is nothing to prove; assume $\gamma\notin\Delta$. Since $\Delta^\perp=\emp$, it follows that there is $\alpha\in\Delta$ such that $(\alpha,\gamma)=-1$, i.e. $\alpha+\gamma\in\Phi$. Then 
	$
	[v,e_\alpha]\in [v,L'(\sigma)]\le L(\sigma).
	$  
	Hence $v^\gamma=\pm[v,e_\alpha]^{\alpha+\gamma}\in\sigma_{\alpha+\gamma}$. However, by Lemma \ref{Weylorbits} we have $\sigma_{\alpha+\gamma}=\sigma_\gamma$.
\end{proof}

\subsection{Net subgroups}

For every net of ideals $\sigma$, we set
\begin{align*}
	E(\Phi,\Delta,R,\sigma)&=\<x_\alpha(\xi)\colon \alpha\in\Phi,\; \xi\in\sigma_\alpha\>\le G(\Phi,R),\\
	S(\Phi,\Delta,R,\sigma)&=\Stab_{G(\Phi,R)}(L(\sigma)).
\end{align*}

When the rest of parameters are clear from the context, we will denote this subgroups simply by $E(\sigma)$ and $S(\sigma)$.

These two subgroups will be the smallest and the biggest subgroup in a sandwich. Therefore, we reformulate the conjectures from \cite{VSch}, replacing nor\-ma\-li\-ser by the subgroup $S(\sigma)$.

Now we make the following simple observation.

\begin{lem}
	\label{Ssigmadesrciption} Let $\sigma$ be a net of ideals. Then
	$$
	S(\sigma)=\big\{g\in G(\Phi,R)\colon \leftact{g}{L'(\sigma)}\le L(\sigma)\text{ and }\leftact{g^{-1}}{L'(\sigma)}\le L(\sigma)\big\}.
	$$
\end{lem}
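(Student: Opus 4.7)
The plan is to prove the lemma by double inclusion, with the interesting direction being an immediate combination of the two parts of Lemma~\ref{L'}.

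The forward inclusion is essentially formal: if $g\in S(\sigma)$, then by definition $\leftact{g}{L(\sigma)}=L(\sigma)$, and since $L'(\sigma)\le L(\sigma)$ (by construction), we get $\leftact{g}{L'(\sigma)}\le L(\sigma)$; applying the same reasoning to $g^{-1}\in S(\sigma)$ gives the second condition. So the set on the right contains $S(\sigma)$.

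For the reverse inclusion, I would let $g$ lie in the right-hand side and show that $\leftact{g}{L(\sigma)}\le L(\sigma)$; applying the same argument to $g^{-1}$ then gives the opposite inclusion and yields $g\in S(\sigma)$. Pick $v\in L(\sigma)$. By Lemma~\ref{L'}(2), it suffices to verify that $[\leftact{g}{v},L'(\sigma)]\le L(\sigma)$. The key manipulation is the identity
$$
[\leftact{g}{v},L'(\sigma)]=\leftact{g}{[v,\leftact{g^{-1}}{L'(\sigma)}]}.
$$
Now the hypothesis on $g^{-1}$ gives $\leftact{g^{-1}}{L'(\sigma)}\le L(\sigma)$, so the inner bracket lies in $[L(\sigma),L(\sigma)]$, which by Lemma~\ref{L'}(1) is contained in $L'(\sigma)$. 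Conjugating back by $g$ and invoking the hypothesis on $g$ itself, we conclude that this lies in $\leftact{g}{L'(\sigma)}\le L(\sigma)$, which is exactly what was needed.

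There is no serious obstacle here: the whole content is that the two conditions in Lemma~\ref{L'} are precisely designed to make $L'(\sigma)$ a ``testing'' subalgebra that detects membership in $L(\sigma)$ after conjugation, so the main step is just the commutator-conjugation identity above combined with the two parts of Lemma~\ref{L'}. The only thing to keep in mind is that the condition must be imposed on both $g$ and $g^{-1}$, because we use the first hypothesis to push the outer conjugation back into $L(\sigma)$ and the second one to bring $\leftact{g^{-1}}{L'(\sigma)}$ inside $L(\sigma)$; symmetry in $g$ and $g^{-1}$ then gives the two-sided stabilisation needed for $g\in\Stab_{G(\Phi,R)}(L(\sigma))$.
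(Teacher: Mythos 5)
Your proposal is correct and follows essentially the same route as the paper: the forward inclusion is immediate, and the reverse inclusion uses the identity $[\leftact{g}{L(\sigma)},L'(\sigma)]=\leftact{g}{[L(\sigma),\leftact{g^{-1}}{L'(\sigma)}]}$ together with both parts of Lemma~\ref{L'}, exactly as in the paper's argument.
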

\begin{proof}
	The inclusion of the left-hand side into the right-hand side is obvious, we prove the inverse inclusion. Let $g$ be an element from the right-hand side. By Lemma \ref{L'}, to check that $\leftact{g}{L(\sigma)}\le L(\sigma)$, it suffices to prove the inclusion
	$$
	[\leftact{g}{L(\sigma)},L'(\sigma)]\le L(\sigma).
	$$ 
	We have
	$$
	[\leftact{g}{L(\sigma)},L'(\sigma)]=\leftact{g}{[L(\sigma),\leftact{g^{-1}}{L'(\sigma)}]}\le \leftact{g}{[L(\sigma),L(\sigma)]}\le \leftact{g}{L'(\sigma)}\le L(\sigma).
	$$
	Similarly we have $\leftact{g^{-1}}{L(\sigma)}\le L(\sigma)$. Thus $g\in S(\sigma)$.
\end{proof}

\section{The statement of the main result} 

\subsection{Pseudo-standard overgroups}

Consider an overgroup 
$$
E(\Delta,R)\le H\le G(\Phi,R).
$$
We say that the overgroup $H$ is {\it pseudo-standard} if
$$
H\le S(\lev(H)).
$$

For given $\Phi$, $\Delta$ and $R$, we say that {\it pseudo-standard description} of overgroups is available if all the overgroups $E(\Delta,R)\le H\le G(\Phi,R)$ are pseudo-standard. 

It will be proved later that $\lev(S(\sigma))=\sigma$ for every net of ideals $\sigma$. Hence every net of ideals can be a level of an overgroup. It follows that pseudo-standard description can be reformulated in the following way: for any overgroup
$$
E(\Delta,R)\le H\le G(\Phi,R)
$$
there exists a unique net of ideals $\sigma$ such that
$$
E(\sigma)\le H \le S(\sigma).
$$

We add the prefix ``pseudo'' because the subgroup $E(\sigma)$ may fail to be normal in $S(\sigma)$, i.e. technically such a result is not a sandwich classification.  

\subsection{Combinatorial condition}
\label{CombCondition}

Fix a pair $(\Phi,\Delta)$. Let $\alpha_1,\alpha_2\in \Delta$ and let $\alpha_1\perp\alpha_2$. We call a pair of roots $\alpha_1,\alpha_2$ {\it admissible} if for every distinct roots $\gamma_1,\gamma_2\in \Sigma_{\alpha_1,\alpha_2}\sm \Delta$ there exists a root $\beta\in \Delta$ such that $\vpi_{\alpha_1,\alpha_2}(\beta)=0$ and $(\beta, \gamma_1)\ne(\beta,\gamma_2)$.

Now we formulate a condition on the pair $(\Phi,\Delta)$:
\begin{equation}\tag{$*$}
	\begin{split}
		&\textit{for any root }\gamma\!\in\!\Phi\sm\Delta\textit{ there exists an admissible pair}
		\\[-1mm]
		&\textit{of orthogonal roots }\alpha_1,\alpha_2\!\in\!\Delta\textit{ such that } (\alpha_1,\gamma)\!=\!(\alpha_2,\gamma)\!=\!-1.
	\end{split}
\end{equation}

In the rest of the paper we always assume that this condition is fulfilled. Note that the condition $\Delta^\perp=\emp$, which we require before, follows from the condition $(*)$.

In Section \ref{examples} we provide many examples of subsystem that satisfy $(*)$.

\subsection{Main theorem}

The main result of the present paper is the following theorem.

\begin{thm}\label{sandwich} Let $R$ be a commutative ring. Let $\Phi$ be an irreducible simply laced root system, and let $\Delta$ be its subsystem that satisfies $(*)$. Then the following statements hold.
	
	\begin{enumerate}
		\item If the ring $R$ has no residue field of two elements, then a pseudo-standard description of overgroups is available.
		
		\item Assume that for fixed $\Phi$ and $\Delta$ a pseudo-standard description of overgroups is available for $R=\F_2$. Then it is available for an arbitrary ring $R$.
	\end{enumerate}
\end{thm}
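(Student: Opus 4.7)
My plan for Theorem \ref{sandwich} follows the standard paradigm for overgroup problems, adapted via the $2A_1$-technique announced in the introduction. Let $H$ with $E(\Delta,R)\le H\le G(\Phi,R)$ be given and set $\sigma=\lev(H)$. By Lemma \ref{Ssigmadesrciption}, proving $H\le S(\sigma)$ reduces to showing that for each $g\in H$, each generator $\xi e_\alpha$ of $L'(\sigma)$, and each root $\gamma\in\Phi$, the $\gamma$-coefficient of $\leftact{g^{\pm 1}}{\xi e_\alpha}$ lies in $\sigma_\gamma$. Since $\sigma_\gamma=\{\eta\in R\colon x_\gamma(\eta)\in H\}$, this amounts to producing the relevant root element inside $H$.

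For Part (1), given $g\in H$ and $\gamma\in\Phi\sm\Delta$, I would invoke condition $(*)$ to pick an admissible pair $\alpha_1,\alpha_2\in\Delta$ with $\alpha_1\perp\alpha_2$ and $(\alpha_i,\gamma)=-1$. Then $\gamma+\alpha_1$, $\gamma+\alpha_2$ and $\gamma+\alpha_1+\alpha_2$ are all roots, placing $\gamma$ on the $\vpi_{\alpha_1,\alpha_2}$-level $-2$ and giving an $A_1\times A_1$-quadruple straddling the grading. The key extraction lemma would assert the following: using $H$-elements $h(\xi,\zeta)=x_{\alpha_1}(\xi)x_{\alpha_2}(\zeta)\in E(\Delta,R)$, the commutator $[g,h(\xi,\zeta)]$, expanded via the Chevalley commutator formula (trivial between $\alpha_1$ and $\alpha_2$), yields an element of $H$ whose $U'_{\alpha_1,\alpha_2}$-component is an explicit product $\prod_\delta x_\delta(p_\delta(g;\xi,\zeta))$ with coefficients $p_\delta$ linear in $\xi$ or $\zeta$ and polynomial in the entries of $g$. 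Writing such commutators for several parameter values and taking differences in the spirit of the remark after the main lemma of \cite{VavGav}, one isolates a single $x_\delta(c\cdot g_\gamma)$ whose coefficient equals the desired $\gamma$-entry of $g$ times some scalar $c$. The hypothesis "no residue field $\F_2$" enters here as it does in the $A_2$-proof, providing $\tau\in R$ with $\tau(\tau-1)\in R^*$ so that $c$ becomes a unit.

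The second ingredient is to disentangle $\gamma$ from other roots $\gamma'\in\Sigma_{\alpha_1,\alpha_2}\sm\Delta$ sharing its $\vpi_{\alpha_1,\alpha_2}$-level. The admissibility clause of $(*)$ supplies $\beta\in\Delta$ with $\vpi_{\alpha_1,\alpha_2}(\beta)=0$ and $(\beta,\gamma)\ne(\beta,\gamma')$; conjugating the extracted element by $x_\beta(t)\in E(\Delta,R)$ for several values of $t$ and forming $R$-linear combinations separates the contribution of $\gamma$ from that of $\gamma'$. One thus produces $x_\gamma(c\cdot g_\gamma)\in H$ with $c\in R^*$, giving $g_\gamma\in\sigma_\gamma$. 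Iterating over all $\gamma\in\Phi\sm\Delta$ and both $g$ and $g^{-1}$ yields the pseudo-standard description.

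For Part (2), the reduction from arbitrary $R$ to $\F_2$ should proceed by a generic-element / localization argument in the style of \cite{StepUniloc}: membership of the generic element in $S(\sigma)$ is cut out by polynomial conditions in $\Z[G]$, and these can be checked locally, i.e. modulo every maximal ideal $M$ of $R$. Residue fields other than $\F_2$ are handled by Part (1), while residue field $\F_2$ is handled by the hypothesis. The main obstacle I anticipate lies in the extraction lemma of Part (1): unlike the $A_2$-commutator, the $2A_1$-commutator does not collapse onto a single root subgroup, so one must carefully track the middle-level contributions arising when $\vpi_{\alpha_1,\alpha_2}$ takes values $\pm 1$ and systematically clean them out using further $E(\Delta,R)$-commutators. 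Managing this case analysis uniformly over the admissible pairs provided by $(*)$ is what makes the $2A_1$-proof technically delicate, and is the step where the combinatorial condition on $(\Phi,\Delta)$ is really used.
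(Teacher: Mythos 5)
Your overall skeleton (extract root elements into $H\cap U'_{\alpha_1,\alpha_2}$ using orthogonal pairs from $(*)$, separate roots on the same $\vpi_{\alpha_1,\alpha_2}$-level via the admissibility clause, reduce the general ring to residue fields) matches the paper in spirit, but both of your key steps have genuine gaps. First, the extraction lemma as you state it would fail: for an arbitrary $g\in H$ the commutator $[g,x_{\alpha_1}(\xi)x_{\alpha_2}(\zeta)]$ has no reason to lie in, or project onto, $U'_{\alpha_1,\alpha_2}$, and no amount of ``cleaning out middle-level contributions'' fixes this for a general element of $G(\Phi,R)$. The paper circumvents this by never working with arbitrary $g$: by Corollary \ref{enoughfortandems} it suffices to control \emph{tandems} $(\leftact{h}{x_\alpha(\xi)},\leftact{h}{(\xi e_\alpha)})$, and the whole point of Lemmas \ref{tandemsinparabolic}, \ref{parabolicasstab} and \ref{bitandemsinparabolic} is that for these special elements (and the associated special bitandems $\leftact{g}{(x_{\alpha_1}(tl^{-\alpha_2})x_{\alpha_2}(-tl^{-\alpha_1}))}$) one can certify membership in $P_{\alpha_1,\alpha_2}$ via the Lie algebra, after which conjugating $x_{\alpha_2}(1)$ lands in $H\cap U'_{\alpha_1,\alpha_2}\le E(\sigma)$ by Proposition \ref{capwithU}. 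Second, your use of the hypothesis ``no residue field $\F_2$'' is incorrect over a general ring: it does \emph{not} give you a single $\tau$ with $\tau(\tau-1)\in R^*$; it only guarantees that the elements $t^2+t$ generate the unit ideal (this is exactly how the paper uses it, and only to pass to finitely generated subrings). The invertibility you want is available only after reducing to a field $K$ with $|K|>2$, where it is used to choose $t$ avoiding the roots of a quadratic polynomial $at+bt^2$.

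The reduction in your Part (2) is also substantially underpowered. Membership in $S(\sigma)$ is not a condition that can simply be ``checked modulo every maximal ideal'': the level itself transforms under reduction (Lemma \ref{reduction}), and recovering $\sigma_\alpha$ from its images requires reducing modulo all \emph{powers} $\M^k$ of maximal ideals together with the Krull Intersection Theorem (Proposition \ref{powersofmaximal}), which forces a preliminary passage to Noetherian rings by inductive limit. This leaves you over a local ring with nilpotent maximal ideal, and lifting pseudo-standardness from the residue field back to such a ring is itself nontrivial: the paper needs Lemma \ref{nilpotent} (an induction on the nilpotency degree controlling $HT(\Phi,R)\cap G(\Phi,R,I)$) and, crucially, the normality of $E(\sigma)$ in $S(\sigma)$ over fields (Lemmas \ref{connectedcomponent}--\ref{normalforfields}, proved by algebraic-group arguments about maximal tori and connected components), combined with the transporter description of $S(\sigma)$ in Corollary \ref{transporter}. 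None of these ingredients appears in your sketch, and without them the passage from residue fields to $R$ does not go through.
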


\section{The intersection with $U'_{\alpha_1,\alpha_2}$}

\begin{prop}
	\label{capwithU} 
	Let $H$ be an overgroup of $E(\Delta,R)$ of level $\sigma$. Let $\alpha_1,\alpha_2\in\Delta$ be an admissible pair of orthogonal roots. Then
	$$
	H\cap U'_{\alpha_1,\alpha_2}\le E(\sigma).
	$$
\end{prop}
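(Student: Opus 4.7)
The plan is to decompose any $u \in H \cap U'_{\alpha_1,\alpha_2}$ uniquely as $u = \prod_{\gamma \in \Sigma_{\alpha_1,\alpha_2}} x_\gamma(\xi_\gamma)$ and to prove $\xi_\gamma \in \sigma_\gamma$ for every $\gamma$, which is precisely the condition $u \in E(\sigma)$. The first observation is that $U'_{\alpha_1,\alpha_2}$ is abelian: for any $\gamma_1, \gamma_2 \in \Sigma_{\alpha_1,\alpha_2}$ one has $\vpi_{\alpha_1,\alpha_2}(\gamma_1 + \gamma_2) = 4$, so $\gamma_1 + \gamma_2 \notin \Phi$ and the root subgroups $X_{\gamma_1}$, $X_{\gamma_2}$ commute. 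This makes both the above product and the computation of conjugates clean.

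I would then proceed by induction on $N(u) := |\mathrm{supp}(u) \cap \Sigma'|$, where $\Sigma' := \Sigma_{\alpha_1,\alpha_2} \setminus \Delta$. Roots $\gamma \in \Delta$ are harmless: $\sigma_\gamma = R$ and $x_\gamma(\xi_\gamma) \in E(\Delta, R) \le H$, so such factors can be absorbed into $H$ at any point without affecting the argument. The base case $N(u) \le 1$ is immediate: after absorbing the $\Delta$-factors one is left either with the identity or with a single $x_{\gamma_0}(\xi_{\gamma_0}) \in H$, whence $\xi_{\gamma_0} \in \sigma_{\gamma_0}$ by the very definition of the level.

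For the inductive step $N(u) \ge 2$, pick distinct $\gamma_1, \gamma_2 \in \mathrm{supp}(u) \cap \Sigma'$; admissibility provides $\beta \in \Delta$ with $\vpi_{\alpha_1,\alpha_2}(\beta) = 0$ and $(\beta, \gamma_1) \ne (\beta, \gamma_2)$. Since both inner products lie in $\{-1, 0, 1\}$, a short case check (using the freedom to swap $\gamma_1 \leftrightarrow \gamma_2$ and to replace $\beta$ by $-\beta$) allows me to assume $(\beta, \gamma_1) = -1$ and $(\beta, \gamma_2) \in \{0, 1\}$. Because $\vpi_{\alpha_1,\alpha_2}(\beta) = 0$, whenever $\beta + \gamma \in \Phi$ one also has $\beta + \gamma \in \Sigma_{\alpha_1,\alpha_2}$; combining this with the simply-laced Chevalley commutator formula and the abelian-ness of $U'_{\alpha_1,\alpha_2}$ yields
\[
v := u^{-1} \cdot \leftact{x_\beta(1)}{u} = \prod_{\gamma \in S_\beta} x_{\beta+\gamma}\bigl(N_{\beta,\gamma}\xi_\gamma\bigr) \in H \cap U'_{\alpha_1,\alpha_2},
\]
where $S_\beta := \{\gamma \in \mathrm{supp}(u) : (\beta, \gamma) = -1\}$. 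Since $\gamma_1 \in S_\beta$, $\gamma_2 \notin S_\beta$, and $\beta + \gamma \in \Delta$ iff $\gamma \in \Delta$ (as $\Delta$ is a closed subsystem), the inductive parameter strictly decreases: $N(v) = |S_\beta \cap \Sigma'| < N(u)$.

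Applied to $v$, the inductive hypothesis yields $N_{\beta,\gamma}\xi_\gamma \in \sigma_{\beta+\gamma}$ for every $\gamma \in S_\beta$. Since $N_{\beta,\gamma} = \pm 1$ and $s_\beta(\gamma) = \gamma + \beta$ with $s_\beta \in W(\Delta)$, Lemma~\ref{Weylorbits} gives $\sigma_{\beta+\gamma} = \sigma_\gamma$, so $\xi_\gamma \in \sigma_\gamma$ for all $\gamma \in S_\beta$, and each $x_\gamma(\xi_\gamma)$ with $\gamma \in S_\beta$ therefore lies in $H$. Forming $u' := u \cdot \prod_{\gamma \in S_\beta} x_\gamma(\xi_\gamma)^{-1} \in H \cap U'_{\alpha_1,\alpha_2}$ produces an element with $N(u') < N(u)$ (because $\gamma_1$ has been removed from the support), and a second application of the inductive hypothesis finishes the proof. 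The one delicate point to check is really just the combinatorial arrangement $(\beta, \gamma_1) = -1$, $(\beta, \gamma_2) \ne -1$; everything else is a clean induction driven by the Chevalley commutator formula and the Weyl-orbit identity $\sigma_{\beta+\gamma} = \sigma_\gamma$.
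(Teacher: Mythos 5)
Your proof is correct and takes essentially the same route as the paper's: both hinge on conjugating by $x_\beta(1)$ for the root $\beta$ supplied by admissibility (normalised so that $(\beta,\gamma_1)=-1$, $(\beta,\gamma_2)\ne -1$), using that $U'_{\alpha_1,\alpha_2}$ is abelian and normalised by $x_\beta(1)$, and invoking Lemma~\ref{Weylorbits} to identify $\sigma_{\beta+\gamma}$ with $\sigma_\gamma$. The paper merely packages the induction as a minimal-counterexample argument, in which all coefficients of the minimal element are simultaneously outside their ideals, so it suffices to track the single surviving factor $x_{\beta+\gamma_1}(\pm\xi_1)$ instead of stripping off factors and applying the inductive hypothesis twice as you do; the two organisations are interchangeable.
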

\begin{proof}
	Let $g\in H\cap U'_{\alpha_1,\alpha_2}$. Then 
	$
	g=\prod_{i=1}^k x_{\gamma_i}(\xi_i),
	$
	where $\gamma_i$ are distinct roots from $\Sigma_{\alpha_1,\alpha_2}$. We prove that $\xi_i\in\sigma_i$ for all $i$. Assume that for some $g$ the converse is true. Among all such $g$ we chose the one with the minimal $k$. Then $\xi_i\notin\sigma_{\gamma_i}$ for any~$i$ because otherwise, since $U'_{\alpha_1,\alpha_2}$ is Abelian, it follows that we can remove this factor, and make $k$ smaller. In particular, we have $\gamma_i\notin\Delta$.
	
	If $k=1$, then, in view of the above, we obtain a contradiction with the fact that $\lev(H)=\sigma$. Hence $k\ge 2$. By definition of an admissible pair there is a root $\beta\in \Delta$ such that $\vpi_{\alpha_1,\alpha_2}(\beta)=0$, but $(\beta, \gamma_1)\ne(\beta,\gamma_2)$. Either $(\beta, \gamma_1)$ or $(\beta,\gamma_2)$ is not zero. Without loss of generality, we may assume that it is the first one. Next, replacing, if necessary, $\beta$ by $-\beta$, we may assume that $(\beta, \gamma_1)=-1$. Therefore, $\gamma_1+\beta\in\Phi$, but $\gamma_2+\beta\notin\Phi$.
	
	Set
	$$
	g_1=[x_{\beta}(1),g]=\bigg[x_{\beta}(1),\prod_{i=1}^k x_{\gamma_i}(\xi_i)\bigg]= \prod_{i=1}^k [x_{\beta}(1), x_{\gamma_i}(\xi_i)].
	$$
	The last equality follows from the fact that $x_{\beta}(1)\in P_{\alpha_1,\alpha_2}$ (and hence it normalises $U'_{\alpha_1,\alpha_2}$), and the fact that the group $U'_{\alpha_1,\alpha_2}$ is Abelian. 
	
	Each factor in this expansion is equal either to one, or to an elementary root element for a root from $\Sigma_{\alpha_1,\alpha_2}$. The first factor is equal to $x_{\beta+\gamma_1}(\pm\xi_1)$, where $\xi_1\notin\sigma_{\gamma_1}$. Hence by Lemma \ref{Weylorbits} we have $\xi_1\notin\sigma_{\gamma_1+\beta}$. In addition, since the second factor is equal to one, it follows that the number of non-trivial factors is less than $k$, which contradicts the assumption about minimality of $k$.
\end{proof}

\section{Tandems}

We call an element of the set $G(\Phi,R)\times L(\Phi,R)$ a {\it tandem} if it can be writen as $(\leftact{h}{(x_\alpha(\xi))},\leftact{h}{(\xi e_\alpha)})$, where $\alpha\in\Phi$, $\xi\in R$ and $h\in G(\Phi,R)$. 

Actually, it can be shown that the first component of a tandem can be recovered from the second one, but this is inessential for our purposes.

\begin{rem}
	\label{rootchange} 
	For a given root $\beta\in\Phi$ any tandem $(\leftact{h}{(x_\alpha(\xi))},\leftact{h}{(\xi e_\alpha)})$ can be written as $(\leftact{h'}{(x_\beta(\xi'))},\leftact{h'}{(\xi' e_\beta)})$. Moreover, $h'$ can be obtain from $h$ by multiplication by certain element of the extended  Weyl group, and $\xi'=\pm\xi$.
\end{rem}

\begin{lem}
	\label{tandemactpre} The first component of a tandem acts on the element $v\in L(\Phi,R)$ in a following way\textup:
	$$
	\leftact{(\leftact{h}{x_{\alpha}(\xi)})}{v}=v+[\leftact{h}{(\xi e_\alpha)},v]-\xi(\leftact{h^{-1}}{v})^{-\alpha}\cdot\leftact{h}{(\xi e_{\alpha})}.
	$$
\end{lem}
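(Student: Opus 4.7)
The plan is to reduce the conjugated formula to a computation for $x_\alpha(\xi)$ alone, and then transport the answer back by $h$.

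First I would unfold the conjugation. Since the adjoint representation is a group action,
$$
\leftact{(\leftact{h}{x_{\alpha}(\xi)})}{v}=\leftact{h}{\bigl(\leftact{x_\alpha(\xi)}{(\leftact{h^{-1}}{v})}\bigr)}\tp
$$
Writing $w=\leftact{h^{-1}}{v}$, it suffices to compute $\leftact{x_\alpha(\xi)}{w}$ and then apply $\leftact{h}{\cdot}$.

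Next, I would use that $x_\alpha(\xi)$ acts on $L(\Phi,R)$ by the divided-power exponential of $\xi\ad e_\alpha$. Because $\Phi$ is simply laced, $(\ad e_\alpha)^3$ vanishes on every Chevalley basis element, so
$$
\leftact{x_\alpha(\xi)}{w}=w+\xi[e_\alpha,w]+\xi^2(\ad e_\alpha)^{[2]}(w)\tc
$$
and all that remains is to identify $(\ad e_\alpha)^{[2]}$. I would check this on the Chevalley basis: on a toric $h_i$ it vanishes after two applications; on $e_\alpha$ it is zero; on $e_\beta$ with $\beta\ne\pm\alpha$ it vanishes because simply laced excludes $2\alpha+\beta$ from $\Phi$; and on $e_{-\alpha}$ we have $[e_\alpha,[e_\alpha,e_{-\alpha}]]=[e_\alpha,h_\alpha]=-2e_\alpha$, so the integer operator $(\ad e_\alpha)^{[2]}$ sends $e_{-\alpha}$ to $-e_\alpha$. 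Collecting coefficients in the Chevalley basis,
$$
(\ad e_\alpha)^{[2]}(w)=-w^{-\alpha}e_\alpha\tc\qquad \leftact{x_\alpha(\xi)}{w}=w+\xi[e_\alpha,w]-\xi^2w^{-\alpha}e_\alpha\tp
$$

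Finally, I would apply $\leftact{h}{\cdot}$. Since $h$ is $R$-linear and respects the bracket, and since $w^{-\alpha}\in R$ is a scalar that commutes with the $h$-action,
$$
\leftact{h}{(\leftact{x_\alpha(\xi)}{w})}=\leftact{h}{w}+[\leftact{h}{(\xi e_\alpha)},\leftact{h}{w}]-\xi\,w^{-\alpha}\cdot\leftact{h}{(\xi e_\alpha)}\tc
$$
which upon substituting $\leftact{h}{w}=v$ and $w^{-\alpha}=(\leftact{h^{-1}}{v})^{-\alpha}$ is exactly the asserted formula.

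The main obstacle is the middle step: carefully verifying that $(\ad e_\alpha)^{[2]}$, regarded as an operator defined over $\Z$ via divided powers, is precisely the rank-one map $w\mapsto -w^{-\alpha}e_\alpha$. Once that identification is in place together with the nilpotency $(\ad e_\alpha)^3=0$ (both consequences of the simply laced hypothesis on the $\alpha$-string through any basis vector), the remainder of the argument is a direct application of the $h$-conjugation and linearity.
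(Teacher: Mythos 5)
Your proof is correct and follows essentially the same route as the paper: reduce by conjugation to the action of $x_\alpha(\xi)$ itself, expand the exponential, and identify the quadratic term via the Chevalley-basis relation $[e_\alpha,[e_\alpha,e_{-\alpha}]]=-2e_\alpha$. The only (harmless) difference is how the factor $\tfrac12$ is handled: the paper base-changes to a universal polynomial ring over $\Z$ where $2$ is not a zero divisor, whereas you verify directly on the basis that the divided power $(\ad e_\alpha)^{[2]}$ is the integral rank-one operator $w\mapsto -w^{-\alpha}e_\alpha$.
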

\begin{proof}
	Note that it suffices to prove the formula
	$$
	\leftact{x_{\alpha}(\xi)}{v}=v+[(\xi e_\alpha),v]-v^{-\alpha}\cdot (\xi^2 e_{\alpha}).\eqno(\#)
	$$
	Indeed, if we substitute $\leftact{h^{-1}}{v}$ for $v$ in this formula, and then act by the element $h$ on both sides, then we obtain the required identity.
	
	Moreover, it suffices to prove the equality $(\#)$ in case where $R$ is a ring of polynomials over $\Z$ with $\xi$ and all the coefficients of $v$ being independent variables. Indeed, if we prove this formula for polynomials, then we can prove it for an arbitrary ring~$R$ by applying homomorphism from the polynomial ring to $R$ that sends variables to the corresponding elements. So let $R$ be the ring of polynomials. In this case $2\in R$ is not a zero divisor, and we may write
	$$
	\leftact{x_{\alpha}(\xi)}{v}=v+\big[(\xi e_\alpha),v\big]+{1\over 2}\big[(\xi e_{\alpha}),[(\xi e_{\alpha}),v]\big].
	$$
	It remains to check the relation
	$$
	\big[e_{\alpha},[e_{\alpha},v]\big]=-2 v^{-\alpha}\cdot e_\alpha,
	$$ 
	which follows from the relations in the Chevalley basis.
\end{proof}

\begin{lem}
	\label{tandemact} 
	Let $(g,l)$be a tandem, and $\beta\in\Phi$. Then
	$$
	\leftact{g}{e_\beta}=e_\beta+[l,e_\beta]-l^{-\beta} \cdot l.
	$$
\end{lem}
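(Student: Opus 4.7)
The plan is to apply Lemma~\ref{tandemactpre} directly to $v = e_\beta$, which gives
$$
\leftact{g}{e_\beta} = e_\beta + [l, e_\beta] - \xi\,(\leftact{h^{-1}}{e_\beta})^{-\alpha}\cdot l,
$$
and then to identify the scalar coefficient $\xi\,(\leftact{h^{-1}}{e_\beta})^{-\alpha}$ with $l^{-\beta}$, which immediately yields the claim.

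For this identification, my strategy is to invoke a $G(\Phi, R)$-invariant symmetric $R$-bilinear form $(\,\cdot\,,\,\cdot\,)$ on $L(\Phi,R)$, normalised so that $(e_\alpha, e_{-\alpha}) = 1$ for every $\alpha \in \Phi$, $(e_\alpha, e_\beta) = 0$ whenever $\alpha + \beta \ne 0$, and $(D, Re_\gamma) = 0$ for every root $\gamma$. Under this normalisation, the coefficient of $e_{-\alpha}$ in any element $v$ is exactly $v^{-\alpha} = (v, e_\alpha)$, so by invariance of the form
$$
(\leftact{h^{-1}}{e_\beta})^{-\alpha} = (\leftact{h^{-1}}{e_\beta}, e_\alpha) = (e_\beta, \leftact{h}{e_\alpha}) = (\leftact{h}{e_\alpha})^{-\beta}.
$$
Multiplying by $\xi$ and using the $R$-linearity of the $(-\beta)$-coefficient, the right-hand side becomes $(\leftact{h}{(\xi e_\alpha)})^{-\beta} = l^{-\beta}$, as required. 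Substituting this back into the formula from Lemma~\ref{tandemactpre} completes the argument.

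The main obstacle is justifying the existence and $G(\Phi,R)$-invariance of this bilinear form, since it is not introduced elsewhere in the paper. The cleanest route follows the same reduction used in Lemma~\ref{tandemactpre}: construct the form first over $\Z$ on the Chevalley basis of $L(\Phi, \Z)$ by the prescriptions above, extending it appropriately on the toric part $D$ so as to ensure invariance; verify invariance on the generating elementary root elements $x_\gamma(\tau)$ using the standard Chevalley commutator and adjoint-action formulae; and then obtain the result for an arbitrary commutative ring~$R$ by base change $L(\Phi, \Z) \ox_\Z R$. Equivalently, one may work generically over the ring $\Z[G]$ with the generic element $g_\gen$, where the identity reduces to the classical invariance of a suitably rescaled Killing form on the complex simple Lie algebra $L(\Phi, \C)$, and then specialise to $g$ via the universal property.
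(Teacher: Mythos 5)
Your proof is correct and follows essentially the same route as the paper: reduce via Lemma~\ref{tandemactpre} to identifying a single coefficient, then flip $(\leftact{h^{-1}}{e_\beta})^{-\alpha}$ into $(\leftact{h}{e_\alpha})^{-\beta}$ using an adjoint-invariant bilinear form together with a generic-element reduction. The only real difference is in handling the normalisation: the paper uses the Killing form (whose $G$-invariance is free) and cancels the non-unit constant $\chi(e_{-\beta},e_\beta)$ in the domain $\Z[G][\xi]$, whereas you use a form normalised by $(e_\alpha,e_{-\alpha})=1$, which shifts the burden onto establishing that form's existence and $G(\Phi,R)$-invariance over $\Z$ --- a point your sketch addresses adequately for simply laced $\Phi$.
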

\begin{proof}
	By remark to the definition of a tandem, we may assume that $(g,l)=(\leftact{h}{(x_\beta(\xi))},\leftact{h}{(\xi e_\beta)})$. By previous Lemma, it suffices to prove that
	$$
	l^{-\beta}=\xi(\leftact{h^{-1}}{e_\beta})^{-\beta}.
	$$
	Moreover, we may assume that $R=\Z[G][\xi]$, and $h=g_{\gen}$ is a generic element. Let $\chi$ be the Killing form on $L(\Phi,R)$. Then
	\begin{align*}
		l^{-\beta}\chi(e_{-\beta},e_\beta)&=\chi(l,e_\beta)=\chi(\leftact{h}{(\xi e_\beta)},e_\beta)=\xi\chi(e_\beta,\leftact{h^{-1}}{e_\beta})
		\\&=\xi(\leftact{h^{-1}}{e_\beta})^{-\beta}\chi(e_\beta,e_{-\beta})= \xi(\leftact{h^{-1}}{e_\beta})^{-\beta}\chi(e_{-\beta},e_\beta).
	\end{align*}
	Since the ring $\Z[G][\xi]$ is a domain, we can cancel $\chi(e_{-\beta},e_\beta)$.
\end{proof}

\begin{prop}
	\label{tandemsinSsigma} 
	Let $(g,l)$ be a tandem\textup, and $\sigma$ be a net of ideals. Then
	$
	g\in S(\sigma)\Longleftrightarrow \linebreak l\in L(\sigma).
	$
\end{prop}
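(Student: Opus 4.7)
I plan to prove the two directions of the equivalence separately, using the characterisations of $S(\sigma)$ and $L(\sigma)$ developed in Lemmas~\ref{L'} and~\ref{Ssigmadesrciption}.

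For the \emph{forward direction} ($l\in L(\sigma)\Rightarrow g\in S(\sigma)$), I would invoke Lemma~\ref{Ssigmadesrciption}, reducing the task to the pair of inclusions $\leftact{g^{\pm 1}}{L'(\sigma)}\le L(\sigma)$. The two are symmetric: if $(g,l)=(\leftact{h}{x_\alpha(\xi)},\leftact{h}{(\xi e_\alpha)})$ is a tandem then so is $(g^{-1},-l)=(\leftact{h}{x_\alpha(-\xi)},\leftact{h}{((-\xi)e_\alpha)})$, and $-l\in L(\sigma)$ iff $l\in L(\sigma)$. Since $g$ acts on $L(\Phi,R)$ as an $R$-linear Lie algebra automorphism and $L(\sigma)$ is a Lie subalgebra (Lemma~\ref{lie}), it suffices to verify $\leftact{g}{\xi e_\beta}\in L(\sigma)$ on the Lie-algebra generators $\xi e_\beta$ of $L'(\sigma)$, $\xi\in\sigma_\beta$. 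By Lemma~\ref{tandemact} together with $R$-linearity,
\[
\leftact{g}{\xi e_\beta}=\xi e_\beta+[l,\xi e_\beta]-(\xi l^{-\beta})\cdot l,
\]
and each summand lies in $L(\sigma)$: $\xi e_\beta$ because $\xi\in\sigma_\beta$, $[l,\xi e_\beta]$ because $L(\sigma)$ is a Lie subalgebra containing both factors, and $(\xi l^{-\beta})\cdot l$ because $L(\sigma)$ is an $R$-module containing $l$.

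For the \emph{backward direction} ($g\in S(\sigma)\Rightarrow l\in L(\sigma)$), Lemma~\ref{L'}(2) reduces the claim to $[l,L'(\sigma)]\le L(\sigma)$, hence to verifying $[l,\xi e_\beta]\in L(\sigma)$ on each generator. Rearranging the same formula gives
\[
[l,\xi e_\beta]=\leftact{g}{\xi e_\beta}-\xi e_\beta+(\xi l^{-\beta})\cdot l,
\]
in which the first two terms lie in $L(\sigma)$ (the first by Lemma~\ref{Ssigmadesrciption} applied to $\xi e_\beta\in L'(\sigma)$, the second trivially), so everything collapses to verifying $(\xi l^{-\beta})\cdot l\in L(\sigma)$.

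The \emph{main obstacle} is precisely this last inclusion. My plan is to exploit the iterated-bracket identity $[l,[l,e_\beta]]=-2\,l^{-\beta}\cdot l$, obtained by matching the exponential expansion $\leftact{g}{v}=v+[l,v]+\tfrac12[l,[l,v]]$ -- valid over the generic polynomial ring $\Z[G][\xi]$ where $2$ is a non-zero-divisor, as used in the proof of Lemma~\ref{tandemactpre} -- against Lemma~\ref{tandemact}; since both sides are $\Z$-polynomial in the Chevalley-basis coefficients, the identity transfers to arbitrary $R$. Scaling by $\xi$ and using $[l,l]=0$ then yields $2(\xi l^{-\beta})\cdot l=-[l,y]$ with $y:=\leftact{g}{\xi e_\beta}-\xi e_\beta\in L(\sigma)$. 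Applying Lemma~\ref{tandemactpre} to $y$ (together with the fact that $g$ stabilises $L(\sigma)$) gives a second $R$-linear constraint on $(\xi l^{-\beta})\cdot l$ modulo $L(\sigma)$, and I expect that iterating this -- combined with the orbit invariance of Lemma~\ref{Weylorbits} and the freedom to switch the privileged root of the tandem via Remark~\ref{rootchange} -- will trap $(\xi l^{-\beta})\cdot l$ inside $L(\sigma)$. The most delicate point is characteristic $2$, where the factor of $2$ must be bypassed, likely by a generic-element argument via $g_{\mathrm{gen}}$ in the spirit of~\cite{StepUniloc}.
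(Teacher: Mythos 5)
Your forward direction ($l\in L(\sigma)\Rightarrow g\in S(\sigma)$) is correct and is essentially the paper's argument: the action formula of Lemma~\ref{tandemactpre}/\ref{tandemact} expresses $\leftact{g}{v}$ as $v$ plus a bracket with $l$ plus an $R$-multiple of $l$, and all three terms lie in $L(\sigma)$ once $l$ does. The backward direction, however, contains a genuine gap. Your reduction via Lemma~\ref{L'}(2) correctly collapses the claim to the single inclusion $(\xi l^{-\beta})\cdot l\in L(\sigma)$, but this is where all the difficulty sits (if $l^{-\beta}$ is a unit it is literally equivalent to $l\in L(\sigma)$), and your plan for it does not close. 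The identity $2(\xi l^{-\beta})\cdot l=-[l,y]$ with $y\in L(\sigma)$ gives no information, because $l$ is exactly what is not yet known to lie in $L(\sigma)$, so $[l,y]$ is not known to lie there either. Applying Lemma~\ref{tandemactpre} to $y$ only produces further constraints of the shape ``(some scalar)$\cdot l\in L(\sigma)$ modulo $L(\sigma)$'', and you give no argument that the resulting system of scalars generates the unit ideal or otherwise traps $\xi l^{-\beta}$. Finally, the factor of $2$ cannot be removed by a generic-element argument: $2x\in L(\sigma)$ does not imply $x\in L(\sigma)$ (take $R=\Z$ and $\sigma_\gamma=2\Z$), and specialising from $\Z[G][\xi]$ does not repair this because $L(\sigma)$ is defined only over the target ring.

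For comparison, the paper proves the backward direction by working coefficientwise rather than with whole Lie-algebra elements. For $\gamma\in\Phi\sm\Delta$ one picks, using $(*)$, a root $\alpha\in\Delta$ with $(\alpha,\gamma)=-1$, and extracts the $\gamma$- and $(\gamma+\alpha)$-coefficients of $\leftact{g}{e_{\alpha}}\in L(\sigma)$ and the $\gamma$-coefficient of $\leftact{g}{e_{-\alpha}}\in L(\sigma)$ via Lemma~\ref{tandemact}. Together with Lemma~\ref{Weylorbits} (so that $\sigma_{\gamma+\alpha}=\sigma_\gamma$) this yields the three congruences $l^{-\alpha}l^{\gamma}\in\sigma_\gamma$, $l^{-\alpha}l^{\gamma+\alpha}\pm l^{\gamma}\in\sigma_\gamma$, and $l^{\alpha}l^{\gamma}\pm l^{\gamma+\alpha}\in\sigma_\gamma$; multiplying the third by $l^{-\alpha}$ and combining with the second gives $l^{-\alpha}l^{\alpha}l^{\gamma}\pm l^{\gamma}\in\sigma_\gamma$, and then the first congruence kills the term $l^{-\alpha}l^{\alpha}l^{\gamma}$, leaving $l^{\gamma}\in\sigma_\gamma$. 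This manipulation of products of coefficients is the idea your sketch is missing; no division by $2$ is ever needed. You would need to supply an argument of this kind (or a genuinely different one) to complete your backward direction.
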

\begin{proof}
	The implication $l\!\in\! L(\sigma)\!\Longrightarrow\! g\!\in\! S(\sigma)$ follows from Lemma~\ref{tandemactpre}.
	
	Conversely, let $g\in S(\sigma)$, and let $\gamma\in \Phi\sm \Delta$. We must check that $l^\gamma\in \sigma_\gamma$. Take $\alpha\in \Delta$ such that $\alpha+\gamma\in\Phi$ (it exists by condition $(*)$). Then $\leftact{g}{e_\alpha}\in L(\sigma)$. Hence by Lemmas \ref{tandemact} and \ref{Weylorbits} we have
	\begin{align}
		l^{-\alpha}l^{\gamma}&=-(\leftact{g}{e_\alpha})^{\gamma}\in \sigma_\gamma,\\
		l^{-\alpha}l^{\gamma+\alpha}\pm l^{\gamma}&=-(\leftact{g}{e_\alpha})^{\gamma+\alpha}\in\sigma_{\gamma+\alpha}= \sigma_\gamma.
	\end{align}
	Furthermore, we have $\leftact{g}{e_{-\alpha}}\in L(\sigma)$; hence
	\begin{align}
		l^\alpha l^\gamma\pm l^{\gamma+\alpha}=-(\leftact{g}{e_{-\alpha}})^{\gamma}\in \sigma_\gamma.
	\end{align}
	
	Multiplying $(3)$ by $l^{-\alpha}$ and adding or subtracting $(2)$, we obtain
	\begin{align}
		l^{-\alpha}l^{\alpha}l^{\gamma}\pm l^\gamma\in\sigma_{\gamma}.
	\end{align}
	Now the congruences $(4)$ and $(1)$ show that $l^\gamma\in\sigma_{\gamma}$.
\end{proof}

\begin{cor}
	\label{levelSsigma} If $\sigma$ is a net of ideals\textup, then $\lev(S(\sigma))=\sigma$. In particular\textup, $E(\sigma)\le S(\sigma)$.
\end{cor}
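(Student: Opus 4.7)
The plan is to reduce both statements directly to Proposition~\ref{tandemsinSsigma} by observing that for every $\alpha\in\Phi$ and every $\xi\in R$ the pair $(x_\alpha(\xi),\xi e_\alpha)$ is itself a tandem, obtained from the definition by taking $h=1$. This converts the question of whether $x_\alpha(\xi)\in S(\sigma)$ into a question about membership of $\xi e_\alpha$ in $L(\sigma)$, which by the explicit description $L(\sigma)=D\oplus\bigoplus_{\beta\in\Phi}\sigma_\beta e_\beta$ is nothing but the condition $\xi\in\sigma_\alpha$.

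First I would prove the equality $\lev(S(\sigma))_\alpha=\sigma_\alpha$ for each $\alpha\in\Phi$. Applying Proposition~\ref{tandemsinSsigma} to the trivial tandem above yields the chain of equivalences
$$
x_\alpha(\xi)\in S(\sigma)\iff \xi e_\alpha\in L(\sigma)\iff \xi\in\sigma_\alpha.
$$
Unwinding the definition $\lev(S(\sigma))_\alpha=\{\xi\in R\colon x_\alpha(\xi)\in S(\sigma)\}$ gives precisely $\lev(S(\sigma))_\alpha=\sigma_\alpha$. For $\alpha\in\Delta$ both conditions are automatically true for every $\xi$, in agreement with $\sigma_\alpha=R$.

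The ``in particular'' clause is then immediate. By definition $E(\sigma)=\<x_\alpha(\xi)\colon \alpha\in\Phi,\ \xi\in\sigma_\alpha\>$, and the previous paragraph shows that each such generator lies in $S(\sigma)$; hence $E(\sigma)\le S(\sigma)$.

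I do not anticipate any real obstacle here: the substance has been absorbed into Proposition~\ref{tandemsinSsigma}, and this corollary is essentially a bookkeeping step. The only things worth double-checking are that the choice $h=1$ indeed produces a tandem in the sense of the stated definition (it does, verbatim), and that the decomposition of $L(\sigma)$ lets us read off the condition $\xi e_\alpha\in L(\sigma)$ coefficient-wise as $\xi\in\sigma_\alpha$.
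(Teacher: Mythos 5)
Your proof is correct and is exactly the paper's argument: the paper's one-line proof is ``Apply Proposition~\ref{tandemsinSsigma} to tandems $(x_\alpha(\xi),\xi e_\alpha)$, $\alpha\in\Phi$,'' and your write-up simply spells out the same reduction in detail. No issues.
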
 

\begin{proof}
	Apply Proposition \ref{tandemsinSsigma} to tandems $(x_\alpha(\xi),\xi e_\alpha)$, ${\alpha\!\in\!\Phi}$.
\end{proof}

\begin{cor}\label{transporter} Let $\sigma$ be a net of ideals. Then
	$$
	S(\sigma)=\Tran_{G(\Phi,R)}(E(\sigma),S(\sigma))\cap\left(\Tran_{G(\Phi,R)}(E(\sigma),S(\sigma))\right)^{-1}.
	$$
\end{cor}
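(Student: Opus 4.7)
My plan is to reduce the corollary to Proposition \ref{tandemsinSsigma} applied to tandems of the form $(\leftact{g}{x_\alpha(\xi)},\,\leftact{g}{(\xi e_\alpha)})$, which arise from the definition of a tandem by taking $h=g$. Once this is set up, the transporter condition translates directly into a membership condition for the generators of $L'(\sigma)$.

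The inclusion of $S(\sigma)$ into the right-hand side should be essentially free. By Corollary \ref{levelSsigma} we have $E(\sigma)\le S(\sigma)$; thus any $g\in S(\sigma)$, together with its inverse, carries $E(\sigma)$ into $S(\sigma)$ under conjugation, putting $g$ into both transporters.

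For the reverse inclusion, I will take $g$ lying in both transporters and aim to apply Lemma \ref{Ssigmadesrciption}; it then suffices to show that $\leftact{g}{L'(\sigma)}$ and $\leftact{g^{-1}}{L'(\sigma)}$ are contained in $L(\sigma)$. I first verify this on the generators $\xi e_\alpha$, $\xi\in\sigma_\alpha$, of $L'(\sigma)$. Since $x_\alpha(\xi)\in E(\sigma)$, the first transporter condition gives $\leftact{g}{x_\alpha(\xi)}\in S(\sigma)$; Proposition \ref{tandemsinSsigma} applied to the tandem $(\leftact{g}{x_\alpha(\xi)},\leftact{g}{(\xi e_\alpha)})$ then forces $\leftact{g}{(\xi e_\alpha)}\in L(\sigma)$.

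To pass from generators to all of $L'(\sigma)$, I will use that the adjoint action preserves the Lie bracket: $\leftact{g}{L'(\sigma)}$ is the Lie subalgebra generated by the elements $\leftact{g}{(\xi e_\alpha)}$, all of which lie in $L(\sigma)$ by the previous step, and $L(\sigma)$ is itself a Lie subalgebra by Lemma \ref{lie}; hence their Lie-span is still in $L(\sigma)$. The symmetric argument with $g^{-1}$ completes the hypothesis of Lemma \ref{Ssigmadesrciption}. The only step that calls for any foresight is spotting the tandem with $h=g$; given the preparatory work already done, I do not anticipate any genuine obstacle beyond that.
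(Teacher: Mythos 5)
Your argument is correct and follows essentially the same route as the paper: reduce via Lemma \ref{Ssigmadesrciption} to checking the generators $\xi e_\alpha$ of $L'(\sigma)$, and then apply Proposition \ref{tandemsinSsigma} to the tandem $(\leftact{g}{x_\alpha(\xi)},\leftact{g}{(\xi e_\alpha)})$, using that the right-hand side is closed under inversion. The only difference is that you spell out the passage from generators to all of $L'(\sigma)$ via Lemma \ref{lie}, which the paper leaves implicit.
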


\begin{proof}
	The left-hand side is obviously contained in the right-hand side, we prove the inverse inclusion. By Lemma \ref{Ssigmadesrciption} it suffices to check that
	$$
	\leftact{g}{(\xi e_\alpha)}\in L(\sigma)
	$$
	for all $\alpha\in \Phi$, $\xi\in\sigma_\alpha$ and $g$ belonging to the right-hand side (because the right-hand side is invariant under taking the inverse). But this inclusion holds true by Proposition \ref{tandemsinSsigma} because $(\leftact{g}{x_\alpha(\xi)},\leftact{g}{(\xi e_\alpha)})$ is a tandem, and by assumption its first component is in $S(\sigma)$.
\end{proof}

\begin{cor}
	\label{enoughfortandems} Let $H$ be an overgroup of $E(\Delta,R)$ of level
	$\sigma$. Then the following statements are equivalent\textup:
	\begin{enumerate}
		\item[i)] $H$ is pseudo-standard\textup;		
		\item[ii)] for any tandem $(g,l)$ if $g\in H$, then $l\in L(\sigma)$.
	\end{enumerate}
\end{cor}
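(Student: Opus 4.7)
The implication (i) $\Rightarrow$ (ii) is essentially immediate from what has already been established. Given a tandem $(g,l)$ with $g\in H$ and assuming $H$ is pseudo-standard, we have $g\in S(\sigma)$, and Proposition \ref{tandemsinSsigma} hands us $l\in L(\sigma)$ directly. No further argument is needed.

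For the converse (ii) $\Rightarrow$ (i), the plan is to verify the criterion supplied by Lemma \ref{Ssigmadesrciption}: for an arbitrary $g\in H$ I need to show both $\leftact{g}{L'(\sigma)}\le L(\sigma)$ and $\leftact{g^{-1}}{L'(\sigma)}\le L(\sigma)$. Since $L'(\sigma)$ is by definition generated as a Lie subalgebra by the elements $\xi e_\alpha$ with $\alpha\in\Phi$ and $\xi\in\sigma_\alpha$, and since conjugation by $g$ is a Lie algebra automorphism of $L(\Phi,R)$ while $L(\sigma)$ is a Lie subalgebra (Lemma \ref{lie}), it suffices to check that $\leftact{g}{(\xi e_\alpha)}\in L(\sigma)$ for every such generator.

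To produce this, I observe that the pair $(\leftact{g}{x_\alpha(\xi)},\leftact{g}{(\xi e_\alpha)})$ is a tandem by definition (with $h=g$, playing the role of the conjugating element). Now $\xi\in\sigma_\alpha=\lev(H)_\alpha$ means $x_\alpha(\xi)\in H$, and since $H$ is a subgroup containing $g$, the conjugate $\leftact{g}{x_\alpha(\xi)}$ is also in $H$. Hypothesis (ii) then forces $\leftact{g}{(\xi e_\alpha)}\in L(\sigma)$, as required. Because $g^{-1}$ likewise lies in the subgroup $H$, the identical argument applied to $g^{-1}$ yields $\leftact{g^{-1}}{L'(\sigma)}\le L(\sigma)$, and Lemma \ref{Ssigmadesrciption} delivers $g\in S(\sigma)$.

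There is no serious obstacle here; the statement is a reformulation of pseudo-standardness in a form that will be technically convenient in the subsequent sections, and every ingredient has already been supplied. The only point that requires a moment of care is the reduction from checking $\leftact{g}{L'(\sigma)}\le L(\sigma)$ on all of $L'(\sigma)$ to checking it only on the generating set $\{\xi e_\alpha\}$, which rests on the Lie subalgebra property of $L(\sigma)$ together with the fact that group conjugation respects Lie brackets and $R$-linear combinations in $L(\Phi,R)$.
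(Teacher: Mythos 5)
Your proof is correct and follows essentially the same route as the paper: the forward implication via Proposition \ref{tandemsinSsigma}, and the converse by applying Lemma \ref{Ssigmadesrciption} to the tandems $(\leftact{h}{x_\alpha(\xi)},\leftact{h}{(\xi e_\alpha)})$ with $h\in H$ and $\xi\in\sigma_\alpha$. Your explicit remarks on reducing to the Lie-algebra generators of $L'(\sigma)$ and on handling $g^{-1}$ merely spell out details the paper leaves implicit.
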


\begin{proof}
	The second statement follows from the first one by Pro\-po\-si\-tion~\ref{tandemsinSsigma}, we prove the converse implication. Assume that the second statement is true for $H$. By Lemma \ref{Ssigmadesrciption} it suffices to check that
	$$
	\leftact{h}{(\xi e_\alpha)}\in L(\sigma)
	$$
	for any $\alpha\in \Phi$, $\xi\in\sigma_\alpha$ and any $h\in H$. This is true because $(\leftact{h}{x_\alpha(\xi)},\leftact{h}{(\xi e_\alpha)})$ is a tandem and its first component is in $H$.
\end{proof}

\begin{lem}
	\label{tandemsinparabolic} Suppose that $\alpha_1,\alpha_2\in \Phi,$ $\alpha_1\perp \alpha_2,$ and let $(g,l)$ be a tandem such that $l$ is in the Lie algebra $L_{\alpha_1,\alpha_2},$ where
	$$
	L_{\alpha_1,\alpha_2}=D\oplus\bigoplus_{\vpi_{\alpha_1,\alpha_2}(\alpha)\ge 0} R\cdot e_\alpha\le L(\Phi,R).
	$$
	Then $g\in P_{\alpha_1,\alpha_2}$.
\end{lem}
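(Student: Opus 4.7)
The plan is to establish $\leftact{g}{L_{\alpha_1,\alpha_2}}=L_{\alpha_1,\alpha_2}$ via Lemma~\ref{tandemactpre}, and then invoke the standard identification of the parabolic $P_{\alpha_1,\alpha_2}$ with the stabiliser of its own Lie algebra under the adjoint action.

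First observe that $L_{\alpha_1,\alpha_2}$ is a Lie subalgebra of $L(\Phi,R)$: the set of roots with $\vpi_{\alpha_1,\alpha_2}\ge 0$ is closed under addition because $\vpi_{\alpha_1,\alpha_2}$ is linear, the torus $D$ normalises each root space, and when $\alpha+\beta=0$ with $\vpi(\alpha)=\vpi(\beta)=0$ one has $[e_\alpha,e_{-\alpha}]=\pm h_\alpha\in D\subseteq L_{\alpha_1,\alpha_2}$. Now write the tandem as $(g,l)=(\leftact{h}{x_\alpha(\xi)},\leftact{h}{(\xi e_\alpha)})$. For any $v$ in the natural generating set of the $R$-module $L_{\alpha_1,\alpha_2}$---either $v=h_i$ or $v=e_\beta$ with $\vpi(\beta)\ge 0$---Lemma~\ref{tandemactpre} gives
$$
\leftact{g}{v}=v+[l,v]-\xi(\leftact{h^{-1}}{v})^{-\alpha}\cdot l.
$$
The first summand lies in $L_{\alpha_1,\alpha_2}$ by the choice of $v$, the second because $L_{\alpha_1,\alpha_2}$ is a Lie subalgebra containing $l$, and the third as a scalar multiple of $l\in L_{\alpha_1,\alpha_2}$. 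By $R$-linearity this gives $\leftact{g}{L_{\alpha_1,\alpha_2}}\subseteq L_{\alpha_1,\alpha_2}$. Since $(g^{-1},-l)$ is again a tandem (explicitly $g^{-1}=\leftact{h}{x_\alpha(-\xi)}$ and $-l=\leftact{h}{((-\xi)e_\alpha)}$) whose second component still lies in $L_{\alpha_1,\alpha_2}$, the same reasoning yields $\leftact{g^{-1}}{L_{\alpha_1,\alpha_2}}\subseteq L_{\alpha_1,\alpha_2}$, hence $\leftact{g}{L_{\alpha_1,\alpha_2}}=L_{\alpha_1,\alpha_2}$.

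It then remains to conclude $g\in P_{\alpha_1,\alpha_2}$. The inclusion $P_{\alpha_1,\alpha_2}\subseteq\mathrm{Stab}_{G(\Phi,R)}(L_{\alpha_1,\alpha_2})$ is immediate: each generator of $P_{\alpha_1,\alpha_2}$ (the split torus $T$ and the root subgroups $X_\gamma$ with $\vpi(\gamma)\ge 0$) preserves $L_{\alpha_1,\alpha_2}$ by a direct Chevalley-commutator computation. The converse---that the $\mathrm{Ad}$-stabiliser of $L_{\alpha_1,\alpha_2}$ is exactly $P_{\alpha_1,\alpha_2}$---is the standard identification of a parabolic subgroup with the stabiliser of its own Lie algebra, provable via a Gauss/Bruhat decomposition or by the dynamic characterisation $P_{\alpha_1,\alpha_2}=P(\alpha_1^\vee+\alpha_2^\vee)$. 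The main obstacle in the proof is precisely this last step: the Lie-algebra stabilisation drops out cleanly from the tandem formula, but promoting it to group-level membership in $P_{\alpha_1,\alpha_2}$ over an arbitrary commutative ring, uniformly in the 3-grading and 5-grading cases, is where most of the structural care is needed.
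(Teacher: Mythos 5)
Your first half coincides with the paper's own argument: applying Lemma~\ref{tandemactpre} to $(g,l)$ and to the tandem $(g^{-1},-l)$ shows that $g$ and $g^{-1}$ both map $L_{\alpha_1,\alpha_2}$ into itself, hence $g\in\Stab_{G(\Phi,R)}(L_{\alpha_1,\alpha_2})$. The details you supply there (closure of $L_{\alpha_1,\alpha_2}$ under the bracket, membership of each of the three summands of the formula) are correct.

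The gap is the step you yourself flag as ``where most of the structural care is needed'' and then do not carry out: the equality $\Stab_{G(\Phi,R)}(L_{\alpha_1,\alpha_2})=P_{\alpha_1,\alpha_2}$ over an arbitrary commutative ring. This is not a citable one-liner in the generality required, and the paper devotes a separate lemma (Lemma~\ref{parabolicasstab}) with a nontrivial scheme-theoretic proof to exactly this. Neither of your suggested routes closes it as stated. A Gauss or Bruhat decomposition is not available over a general commutative ring, so one cannot decompose an arbitrary stabilising element into cells. More importantly, the phrase ``the stabiliser of its own Lie algebra'' is not even the correct statement: as the Warning following the lemma points out, when $G(\Phi,R)$ is not simply connected $L_{\alpha_1,\alpha_2}$ is \emph{not} $\Lie(P_{\alpha_1,\alpha_2})$, so the standard identification you invoke does not literally apply. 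The paper's proof first treats the simply connected case by regarding the inclusion $P_{\alpha_1,\alpha_2}\le\Stab(\Lie(P_{\alpha_1,\alpha_2}))$ as a morphism of flat affine group schemes over $\Z$ and verifying Waterhouse's isomorphism criterion on geometric fibres (a dimension bound coming from the self-normalising property of the Lie algebra of a parabolic, injectivity on $L$- and $L[\eps]/(\eps^2)$-points, and the fact that parabolic subgroups are self-normalised), and then deduces the general isogeny type by lifting $g$ to the simply connected group over a faithfully flat ring extension. An argument of this calibre must be supplied before the lemma can be considered proved.
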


\begin{warn} If the group $G(\Phi,R)$ is not simply connected, then the Lie algebra $L_{\alpha_1,\alpha_2}$ is not the same as the tangent Lie algebra $\Lie(P_{\alpha_1,\alpha_2})$ of the subgroup~$P_{\alpha_1,\alpha_2}$.
\end{warn}

\begin{proof}
	It follows from Lemma \ref{tandemactpre} that $g\in\Stab_{G(\Phi,R)}(L_{\alpha_1,\alpha_2})$.
	Therefore, the required statement is a consequence of the next lemma.
\end{proof}

\begin{lem}
	\label{parabolicasstab} 
	Suppose that $\alpha_1,\alpha_2\in \Phi,$ $\alpha_1\perp \alpha_2$. Then $P_{\alpha_1,\alpha_2}=\Stab_{G(\Phi,R)}(L_{\alpha_1,\alpha_2})$.
\end{lem}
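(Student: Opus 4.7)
The proof splits into two inclusions. The easy direction $P_{\alpha_1,\alpha_2}\subseteq\Stab_{G(\Phi,R)}(L_{\alpha_1,\alpha_2})$ follows by checking generators. The maximal torus $T(\Phi,R)$ preserves each $\vpi_{\alpha_1,\alpha_2}$-weight space, and by Lemma \ref{tandemactpre} every root element $x_\gamma(\xi)$ with $\vpi(\gamma)\ge 0$ shifts weights upward only, so each generator of $P_{\alpha_1,\alpha_2}$ preserves the submodule $L_{\alpha_1,\alpha_2}=D\+\bigoplus_{\vpi(\alpha)\ge 0} R\cdot e_\alpha$.

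For the reverse inclusion, the plan is to use Gauss decomposition with respect to the pair $(P_{\alpha_1,\alpha_2},P^-_{\alpha_1,\alpha_2})$ of opposite parabolics, using the unipotent radical $U^-=\<x_{-\gamma}(\xi)\colon \vpi(\gamma)>0,\ \xi\in R\>$ of $P^-_{\alpha_1,\alpha_2}$. The multiplication $U^-\times P_{\alpha_1,\alpha_2}\to G(\Phi,-)$ is an open immersion of Chevalley--Demazure schemes onto the big cell $\Omega$. Given $g\in\Stab_{G(\Phi,R)}(L_{\alpha_1,\alpha_2})$, after an $\fppf$ base change $R\to R'$ one can translate $g$ into $\Omega(R')$ and obtain a factorisation $g=u^-p$ with $u^-\in U^-(R')$ and $p\in P_{\alpha_1,\alpha_2}(R')$. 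Since $p\in\Stab(L_{\alpha_1,\alpha_2})$ by the first inclusion, the problem reduces to showing $U^-\cap\Stab(L_{\alpha_1,\alpha_2})=\{1\}$. Once $u^-=1$ is established we have $g=p\in P_{\alpha_1,\alpha_2}(R')$, and the conclusion descends to $g\in P_{\alpha_1,\alpha_2}(R)$ because $P_{\alpha_1,\alpha_2}$ is a closed subscheme of $G(\Phi,-)$ and membership in it is an $\fppf$-local condition.

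The triviality $U^-\cap\Stab(L_{\alpha_1,\alpha_2})=\{1\}$ is proved by a direct computation. Writing $u^-=\prod_i x_{-\gamma_i}(\xi_i)$ with $\vpi(\gamma_i)>0$ and applying Lemma \ref{tandemactpre} to suitable test-vectors $v\in L_{\alpha_1,\alpha_2}$, one extracts the components of $u^-\cdot v$ in strictly negative $\vpi$-weight layers, which must all vanish; an induction on the $\vpi$-depth of the $\gamma_i$ then forces each $\xi_i=0$. The canonical first test-vector is the coroot $h\in D$ characterised by $\gamma(h)=\vpi_{\alpha_1,\alpha_2}(\gamma)$ for every root $\gamma$, which settles matters whenever $\vpi(\gamma_i)$ is invertible in $R$. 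The main obstacle is the $\vpi(\gamma_i)=2$ layer in characteristic $2$: the $h$-test yields only $2\xi_i e_{-\gamma_i}\equiv 0$ and carries no information. This is handled by additionally testing on vectors $e_\beta$ with $\vpi(\beta)=0$ and $(\beta,\gamma_i)=\pm 1$ --- such $\beta$ exist for any $\gamma_i$ of maximal $\vpi$-depth by the simply-laced root system geometry --- using the fact that the Chevalley structure constants $\pm 1$ appearing in $[e_{-\gamma_i},e_\beta]$ are units in every ring, which yields the required vanishing $\xi_i=0$ in full generality.
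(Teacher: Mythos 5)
The computational core of your argument --- showing that the opposite unipotent radical meets the stabiliser trivially, including the careful treatment of the $\vpi(\gamma)=2$ layer in characteristic $2$ --- is sound and corresponds to the infinitesimal part of the paper's proof (self-normalisation of $\Lie(P_{\alpha_1,\alpha_2})$, used there for the dimension count). The genuine gap is the reduction step: the claim that after an $\fppf$ base change one can ``translate $g$ into $\Omega(R')$'' is unjustified, and in fact cannot be arranged by any legitimate translation. The big cell $\Omega=U^-P_{\alpha_1,\alpha_2}$ is a proper open subscheme of $G$, and an element outside it stays outside it under arbitrary base change; right translation by $P_{\alpha_1,\alpha_2}$ preserves $\Omega$ setwise (since $\Omega P_{\alpha_1,\alpha_2}=\Omega$), so it never moves an element into $\Omega$, while left translation by anything not already in $\Stab_{G}(L_{\alpha_1,\alpha_2})$ destroys your hypothesis. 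Thus what you actually need is the assertion $\Stab_{G}(L_{\alpha_1,\alpha_2})\subseteq\Omega$, which is essentially equivalent to the lemma itself (it already implies $\Stab\subseteq U^-P_{\alpha_1,\alpha_2}$, and combined with your computation gives $\Stab\subseteq P_{\alpha_1,\alpha_2}$). Put differently, your argument as written only proves $\Stab_{G}(L_{\alpha_1,\alpha_2})\cap\Omega\subseteq P_{\alpha_1,\alpha_2}$, which controls a neighbourhood of the identity but cannot exclude ``extra components'' of the stabiliser lying outside the big cell --- for instance Weyl-group representatives in $N(T)$ that permute the weight layers of $\vpi_{\alpha_1,\alpha_2}$ while preserving $L_{\alpha_1,\alpha_2}$ must be ruled out by a separate group-theoretic input.

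This missing input is exactly what the paper supplies: it applies Waterhouse's isomorphism criterion to the closed immersion $P_{\alpha_1,\alpha_2}\hookrightarrow\Stab(L_{\alpha_1,\alpha_2})$ of schemes over $\Z$, checking (i) a Lie-algebra dimension bound via self-normalisation of $\Lie(P_{\alpha_1,\alpha_2})$ (the analogue of your $U^-$ computation), (ii) injectivity, and crucially (iii) that any point of the stabiliser normalising $P_{\alpha_1,\alpha_2}$ already lies in $P_{\alpha_1,\alpha_2}$, using that parabolic subgroups are self-normalising as algebraic groups; the non-simply-connected case is then handled by a faithfully flat lifting. To repair your proof along your own lines you would need either this normaliser argument or, over a field, the theorem that every overgroup of a parabolic is parabolic (so that $\Stab(L_{\alpha_1,\alpha_2})(K)=Q(K)$ for some $Q\supseteq P_{\alpha_1,\alpha_2}$, which your $U^-$ computation then forces to equal $P_{\alpha_1,\alpha_2}$), followed by a smoothness/infinitesimal argument to pass from fields to arbitrary rings.
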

\begin{proof}
	First we consider the case where the group $G(\Phi,R)$ is simply connected. In this case $L_{\alpha_1,\alpha_2}=\Lie(P_{\alpha_1,\alpha_2})$.
	
	Obviously, $P_{\alpha_1,\alpha_2}\sub\Stab_{G(\Phi,R)}(\Lie(P_{\alpha_1,\alpha_2}))$. Denote by
	$$
	\map{\ph}{P_{\alpha_1,\alpha_2}}{\Stab_{G(\Phi,R)}(\Lie(P_{\alpha_1,\alpha_2}))}
	$$
	the inclusion viewed as a morphism of group schemes over $\Z$. Here we should notice that $\Stab_{G(\Phi,R)}(\Lie(P_{\alpha_1,\alpha_2}))$ is defined by equations over $\Z$, namely certain matrix coefficients in the adjoint representation must be equal to zero. We denote the corresponding scheme by $\Stab(\Lie(P_{\alpha_1,\alpha_2}))$. So we must show that the morphism $\ph$ is an isomorphism.
	
	By Theorem 1.6.1 of \cite{WaterhouseAut} and the fact that the parabolic subgroup is smooth (over $\Z$) and, in particular, flat, it suffices to check that for any algebraically closed field $L$ the following statements hold:
	\begin{enumerate}
		\item $\dim((P_{\alpha_1,\alpha_2})_L)\ge \dim_L \Lie ((\Stab(\Lie(P_{\alpha_1,\alpha_2})))_L);$
		
		\item the maps $\ph(L)$ and $\ph(L[\eps]/(\eps^2))$ are injective;
		
		\item all the elements of $\Stab(\Lie(P_{\alpha_1,\alpha_2}))(L)$, that normalise the connected component of identity in $P_{\alpha_1,\alpha_2}$ (i.e. just $P_{\alpha_1,\alpha_2}$), are in $P_{\alpha_1,\alpha_2}$.
	\end{enumerate}
	
	The second item holds true indeed.. The third one follows from the fact that the parabolic subgroup are self-normalised. Since the scheme $P_{\alpha_1,\alpha_2}$ is smooth, it follows that for the first item it suffices to check that
	$$
	\Lie\big((P_{\alpha_1,\alpha_2})_L\big)\ge \Lie \big((\Stab(\Lie(P_{\alpha_1,\alpha_2})))_L\big).
	$$
	The adjoint action of the right-hand side stabilises $\Lie((P_{\alpha_1,\alpha_2})_L)$ (if a group stabilises a subspace, then so do its Lie algebra). Therefore, the inclusion follows from the fact that $\Lie((P_{\alpha_1,\alpha_2})_L)$ is self-normalised.
	
	Now we consider the arbitrary group $G(\Phi,R)$ in the given isogeny class. Let $h\in G(\Phi,R)$. Then there exists a faithfully flat extension $S$ of the ring $R$ and an element $h'\in G_{\SC}(\Phi,S)$ of the simply connected group such that the natural homomorphism maps it to $h$. Therefore,
	\begin{align*}
		h\in P_{\alpha_1,\alpha_2}&\Leftrightarrow h'\in (P_{\alpha_1,\alpha_2})_{\SC}
		\\
		&\Leftrightarrow h'\in \Stab_{G(\Phi,S)}(L_{\alpha_1,\alpha_2})\Leftrightarrow h\in \Stab_{G(\Phi,R)}(L_{\alpha_1,\alpha_2}),
	\end{align*} 
	where $(P_{\alpha_1,\alpha_2})_{\SC}$ is a parabolic subgroup of $G_{\SC}(\Phi,S)$. This concludes the proof.
\end{proof}

\section{Bitandems}
We call an element of the set $G(\Phi,R)\times L(\Phi,R)$ a {\it bitandem} if it can be writen as $(\leftact{h}{(x_{\alpha_1}(\xi)x_{\alpha_2}(\zeta))},\leftact{h}{(\xi e_{\alpha_1}+\zeta e_{\alpha_2})}),$ where $\alpha_1,\alpha_2\in\Phi$, $\alpha_1\perp \alpha_2,$ $\xi,\zeta\in R$ and $h\in G(\Phi,R)$.

Similarly, We call a family $(g(t),l(t))\in G(\Phi,R)\times L(\Phi,R),$ $t\in R$ a {\it bitandem with parameter} if
\begin{align*}
	g(t)&=\leftact{h}{((x_{\alpha_1}(t\xi))x_{\alpha_2}(t\zeta))},\\
	l(t)&=\leftact{h}{(t\xi e_{\alpha_1}+t\zeta e_{\alpha_2})}.
\end{align*}

In this case we use the notation $g=g(1)$, $l=l(1)$ for short.

\begin{lem}
	\label{bitandemact} 
	Let  $(g(t),l(t))$ be a bitandem with parameter, and let $v\in L(\Phi,R)$. Then there exists $w\in L(\Phi,R)$ such that for any $t\in R$ we have
	$$
	\leftact{g(t)}{v}=v+t[l,v]+t^2w.
	$$
	In addition, we have $2w=[l,[l,v]]$.
\end{lem}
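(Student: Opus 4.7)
The plan is to reduce to two applications of Lemma~\ref{tandemactpre}. Write $g(t) = \leftact{h}{y(t)}$ where $y(t) = x_{\alpha_1}(t\xi) x_{\alpha_2}(t\zeta)$, and set $u = \leftact{h^{-1}}{v}$ and $\tilde l = \xi e_{\alpha_1} + \zeta e_{\alpha_2}$. Since the adjoint action of $h$ is a Lie algebra automorphism, it suffices to produce $\tilde w \in L(\Phi,R)$ with
$$\leftact{y(t)}{u} = u + t[\tilde l, u] + t^2 \tilde w \quad \text{for all } t\in R, \qquad 2\tilde w = [\tilde l, [\tilde l, u]];$$
then $w = \leftact{h}{\tilde w}$ does the job.

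First I would apply Lemma~\ref{tandemactpre} to $x_{\alpha_2}(t\zeta)$ acting on $u$, obtaining
$$u' = \leftact{x_{\alpha_2}(t\zeta)}{u} = u + t[\zeta e_{\alpha_2}, u] - t^2 \zeta^2 u^{-\alpha_2} e_{\alpha_2},$$
and then apply the lemma again to $x_{\alpha_1}(t\xi)$ acting on $u'$. Two simplifications force the expansion to collapse to degree two in $t$, and both rely on $\Phi$ being simply laced with $\alpha_1 \perp \alpha_2$: firstly $\alpha_1+\alpha_2\notin\Phi$, since a third root of the same length as $\alpha_1,\alpha_2$ would force $(\alpha_1,\alpha_2)=-1$, whence $[e_{\alpha_1},e_{\alpha_2}]=0$; secondly no $\gamma\in\Phi$ satisfies $\alpha_2+\gamma = -\alpha_1$ (as $-\alpha_1-\alpha_2$ is likewise not a root), so direct inspection of the three summands in $u'$ shows $(u')^{-\alpha_1} = u^{-\alpha_1}$. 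Collecting terms yields the desired polynomial in $t$ with
$$\tilde w = [\xi e_{\alpha_1}, [\zeta e_{\alpha_2}, u]] - \xi^2 u^{-\alpha_1} e_{\alpha_1} - \zeta^2 u^{-\alpha_2} e_{\alpha_2}.$$

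To verify $2\tilde w = [\tilde l, [\tilde l, u]]$, I would expand the right-hand side into four summands. The two diagonal ones equal $-2\xi^2 u^{-\alpha_1} e_{\alpha_1}$ and $-2\zeta^2 u^{-\alpha_2} e_{\alpha_2}$ by the identity $[e_\alpha,[e_\alpha,v]] = -2 v^{-\alpha} e_\alpha$ already invoked in the proof of Lemma~\ref{tandemactpre}. The off-diagonal summands $[\xi e_{\alpha_1}, [\zeta e_{\alpha_2}, u]]$ and $[\zeta e_{\alpha_2}, [\xi e_{\alpha_1}, u]]$ coincide by the Jacobi identity applied with $[e_{\alpha_1},e_{\alpha_2}]=0$. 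Summing and halving recovers $\tilde w$.

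The argument is essentially mechanical. The only place that requires care is the bookkeeping of quadratic-in-$t$ contributions in the first step; the simply laced orthogonality of $\alpha_1,\alpha_2$ is precisely what eliminates the terms that would otherwise break either the polynomial shape in $t$ or the factor of two relating $\tilde w$ and $[\tilde l,[\tilde l,u]]$.
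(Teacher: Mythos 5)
Your proof is correct, and it takes a genuinely different route from the paper's. The paper argues exactly as in Lemma~\ref{tandemactpre}: reduce to $h=1$ and to the universal polynomial ring where $2$ is not a zero divisor and $t$ is a free variable, expand the two exponentials (which commute since $[e_{\alpha_1},e_{\alpha_2}]=0$), read off $w=\tfrac12[l,[l,v]]$ there, and obtain $w$ over a general $R$ by specialization. You instead stay over the given ring $R$ and apply the already-proved Lemma~\ref{tandemactpre} twice as a black box, which yields an \emph{explicit} formula
$$
\tilde w = [\xi e_{\alpha_1},[\zeta e_{\alpha_2},u]]-\xi^2 u^{-\alpha_1}e_{\alpha_1}-\zeta^2 u^{-\alpha_2}e_{\alpha_2},
$$
and you then verify $2\tilde w=[\tilde l,[\tilde l,u]]$ by the identity $[e_\alpha,[e_\alpha,v]]=-2v^{-\alpha}e_\alpha$ together with Jacobi and $[e_{\alpha_1},e_{\alpha_2}]=0$. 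Your bookkeeping is right: the only potentially troublesome contributions are the cubic term $-t^3\zeta^2u^{-\alpha_2}[\xi e_{\alpha_1},e_{\alpha_2}]$, killed because $\alpha_1+\alpha_2\notin\Phi$ in a simply laced system, and the correction $(u')^{-\alpha_1}\ne u^{-\alpha_1}$, excluded because $-\alpha_1-\alpha_2$ is likewise not a root (and the $D$- and $e_{-\alpha_2}$-components of $u$ land in $Re_{\alpha_2}\oplus D$ under $\ad(e_{\alpha_2})$). What your approach buys is a concrete closed form for $w$ and no recourse to the generic-element specialization; what the paper's buys is brevity, since once in the universal ring the coefficient of $t^2$ is forced and nothing needs to be computed by hand.
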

\begin{proof}
	As in Lemma \ref{tandemactpre}, it suffices to prove this formula for $h=1$ and for such a ring that $2$ is not a zero divizor in it (we may assume that $t$ is a free variable). In this case, the statement is trivial.
\end{proof}

Let $\alpha_1,\alpha_2\in \Phi$, $\alpha_1\perp \alpha_2$, and let $(g,l)$ be a tandem. Set
$$
(g_1(t),l_1(t))=\big(\leftact{g}{(x_{\alpha_1}(tl^{-\alpha_2})x_{\alpha_2}(- tl^{-\alpha_1}))},\leftact{g}{(tl^{-\alpha_2}e_{\alpha_1}- tl^{-\alpha_1}e_{\alpha_2})}\big).
$$

A bitandem with parameter that can be obtain in such a way and the corresponding bitandem $(g_1,l_1)=(g_1(1),l_1(1))$ will be called {\it special} with respect to the pair $\alpha_1$, $\alpha_2$. 

\begin{lem}
	\label{bitandemsinparabolic} 
	Let $(g_1(t),l_1(t))$ be a special with respect to the pair $\alpha_1, \alpha_2$ bitandem with parameter. Then $g_1(t)\in P_{\alpha_1,\alpha_2}$ for all $t\in R$.
\end{lem}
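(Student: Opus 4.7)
The plan is to mirror the proof of Lemma \ref{tandemsinparabolic}: first show that the Lie component $l_1(t)$ of the special bitandem already lies in the subalgebra $L_{\alpha_1,\alpha_2}$, and then deduce, via the action formula of Lemma \ref{bitandemact} together with Lemma \ref{parabolicasstab}, that $g_1(t)$ stabilises $L_{\alpha_1,\alpha_2}$ and hence belongs to $P_{\alpha_1,\alpha_2}$.

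For the first step I would substitute Lemma \ref{tandemact} into $l_1(t)=tl^{-\alpha_2}\leftact{g}{e_{\alpha_1}}-tl^{-\alpha_1}\leftact{g}{e_{\alpha_2}}$. The two summands involving $l^{-\alpha_1}l^{-\alpha_2}l$ carry opposite signs and cancel, leaving $l_1(t)=tl^{-\alpha_2}(e_{\alpha_1}+[l,e_{\alpha_1}])-tl^{-\alpha_1}(e_{\alpha_2}+[l,e_{\alpha_2}])$. A root-by-root check then shows $[l,e_{\alpha_i}]\in L_{\alpha_1,\alpha_2}$: the nontrivial root contributions occur at $\alpha_i$ and at $\gamma+\alpha_i$ for roots $\gamma\neq-\alpha_i$ with $\gamma+\alpha_i\in\Phi$. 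Because $\Phi$ is simply laced and $\alpha_1\perp\alpha_2$, we have $\alpha_1-\alpha_2\notin\Phi$, so $\gamma=-\alpha_{3-i}$ is excluded as well; hence $(\alpha_1,\gamma),(\alpha_2,\gamma)\in\{-1,0,1,2\}$ and $\vpi_{\alpha_1,\alpha_2}(\gamma+\alpha_i)=2+(\alpha_1,\gamma)+(\alpha_2,\gamma)\geq 0$. The toric part of $[l,e_{\alpha_i}]$ sits in $D\subseteq L_{\alpha_1,\alpha_2}$, so $l_1(t)\in L_{\alpha_1,\alpha_2}$.

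With this established, Lemma \ref{bitandemact} gives, for any $v\in L_{\alpha_1,\alpha_2}$, the expansion $\leftact{g_1(t)}{v}=v+t[l_1,v]+t^2w$ together with $2w=[l_1,[l_1,v]]$. Since $L_{\alpha_1,\alpha_2}$ is a Lie subalgebra containing $l_1$ and $v$, both $v+t[l_1,v]$ and $[l_1,[l_1,v]]$ lie in $L_{\alpha_1,\alpha_2}$; letting $\pi$ be the $R$-linear projection of $L(\Phi,R)$ onto the complementary summand $\bigoplus_{\vpi(\gamma)<0}Re_\gamma$, we obtain $2\pi(w)=0$.

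The principal obstacle is upgrading $2\pi(w)=0$ to $\pi(w)=0$ when $R$ has $2$-torsion (for instance when $R=\F_2$). I would handle this by a universal argument: the element $w$ arises as the image, under a ring homomorphism $A\to R$ from the coordinate ring $A$ of the affine scheme that parametrises the data $(g,l,t,v)$, of a universal element $w^\natural\in L(\Phi,A)$. Since $A$ is $\Z$-flat, multiplication by $2$ is injective on the free $A$-module $\bigoplus_{\vpi(\gamma)<0}Ae_\gamma$, so the identity $2\pi(w^\natural)=0$ already forces $\pi(w^\natural)=0$, and this descends to $\pi(w)=0$ in $L(\Phi,R)$. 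Hence $w\in L_{\alpha_1,\alpha_2}$ and $\leftact{g_1(t)}{v}\in L_{\alpha_1,\alpha_2}$. Since $\alpha_1+\alpha_2\notin\Phi$, the root subgroups $X_{\alpha_1}$ and $X_{\alpha_2}$ commute and $g_1(t)^{-1}=g_1(-t)$, so the same argument applied to the inverse yields $\leftact{g_1(t)^{-1}}{v}\in L_{\alpha_1,\alpha_2}$ as well. Consequently $g_1(t)\in\Stab_{G(\Phi,R)}(L_{\alpha_1,\alpha_2})=P_{\alpha_1,\alpha_2}$ by Lemma \ref{parabolicasstab}.
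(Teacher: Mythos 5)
Your proof is correct and follows essentially the same route as the paper's: establish $l_1(t)\in L_{\alpha_1,\alpha_2}$ via Lemma \ref{tandemact} and the cancellation of the $l^{-\alpha_1}l^{-\alpha_2}\,l$ terms, then use Lemma \ref{bitandemact} and Lemma \ref{parabolicasstab} to conclude. The only (immaterial) difference is that the paper passes to the generic situation over $\Z[G][\xi,t]$ once at the outset, whereas you invoke the universal/$\Z$-flatness argument only at the point where the relation $2w=[l_1,[l_1,v]]$ must be divided by $2$; and your root-by-root estimate of $\vpi_{\alpha_1,\alpha_2}$ is the paper's grading argument spelled out.
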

\begin{proof}
	First, it suffices to consider the case where 
	$$
	R=\Z[G][\xi,t],\quad g=\leftact{g_{\gen}}{x_\alpha(\xi)},\quad l=\leftact{g_{\gen}}{(\xi e_\alpha)},
	$$
	and the bitandem with parameter $(g_1(t),l_1(t))$ is obtained from the tandem $(g,l)$ as described above.
	
	Second, note that $l_1\in L_{\alpha_1,\alpha_2}$ (see Lemma \ref{tandemsinparabolic}). Indeed, by Lemma \ref{tandemact} we have
	\begin{align*}
		l_1&=\leftact{g}{(l^{-\alpha_2}e_{\alpha_1}- l^{-\alpha_1}e_{\alpha_2})}
		\\
		&=(l^{-\alpha_2}e_{\alpha_1}- l^{-\alpha_1}e_{\alpha_2})+[l,(l^{-\alpha_2}e_{\alpha_1}- l^{-\alpha_1}e_{\alpha_2})]-l^{-\alpha_2}l^{-\alpha_1}\cdot l+ l^{-\alpha_1}l^{-\alpha_2}\cdot l
		\\
		&=(l^{-\alpha_2}e_{\alpha_1}- l^{-\alpha_1}e_{\alpha_2})+[l,(l^{-\alpha_2}e_{\alpha_1}- l^{-\alpha_1}e_{\alpha_2})].
	\end{align*}
	
	The grading of the root system $\Phi$ by the functional $\vpi_{\alpha_1,\alpha_2}$ induces a grading of the Lie algebra $L(\Phi,R)$. The elements $e_{\alpha_1}$ and $e_{\alpha_2}$ have degree~2. Hence the homogeneous components of both summands in the last expression have non\-ne\-ga\-tive degree, which means exactly that $l_1\in L_{\alpha_1,\alpha_2}$.
	
	Third, $g_1(t)\in \Stab(L_{\alpha_1,\alpha_2})$. Indeed, if $v\in L_{\alpha_1,\alpha_2}$, then by Lemma \ref{bitandemact} we have
	$$
	\leftact{g_1(t)}{v}=v+t[l_1,v]+t^2w.
	$$
	The fist two terms are in $L_{\alpha_1,\alpha_2}$, and about $w$ we know that
	$$
	2w=[l_1,[l_1,v]]\in L_{\alpha_1,\alpha_2}.
	$$
	
	Since $2$ is not a zero divisor in $\Z[G][\xi,t]$, we see that $w\in L_{\alpha_1,\alpha_2}$. Therefore,
	$
	\leftact{g_1(t)}{L_{\alpha_1,\alpha_2}}\le L_{\alpha_1,\alpha_2}.
	$
	Replacing $t$ by $-t$, we obtain the inverse inclusion.
	
	It remains to apply Lemma \ref{parabolicasstab}.
\end{proof}

\section{The case of a field}

\begin{prop}
	\label{field} 
	Let $R=K$ be a field distinct from $\F_2$. Then the pseudo-standard description is available for overgroups of $E(\Delta,K)$.
\end{prop}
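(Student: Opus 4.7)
The plan is to invoke Corollary \ref{enoughfortandems}, reducing the task to showing that for every tandem $(g,l)$ with $g\in H$, one has $l\in L(\sigma)$, where $\sigma=\lev(H)$. Since $l^\alpha\in K=\sigma_\alpha$ is automatic for $\alpha\in\Delta$, the content is to prove $l^\gamma\in\sigma_\gamma$ for each fixed $\gamma\in\Phi\setminus\Delta$.

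Given such a $\gamma$, I would invoke $(*)$ to pick an admissible orthogonal pair $\alpha_1,\alpha_2\in\Delta$ with $(\alpha_i,\gamma)=-1$, so that $\vpi_{\alpha_1,\alpha_2}(\gamma)=-2$ and the $W(\Delta)$-translate $\gamma+\alpha_1+\alpha_2$ lies in $\Sigma_{\alpha_1,\alpha_2}$. Form the special bitandem $(g_1(t),l_1(t))$ with respect to $(\alpha_1,\alpha_2)$ and $(g,l)$. Each $g_1(t)$ lies in $H$ (because $g\in H$ and $x_{\alpha_i}(\cdot)\in E(\Delta,K)\le H$) and, by Lemma \ref{bitandemsinparabolic}, in $P_{\alpha_1,\alpha_2}$; thus $\{g_1(t)\}_{t\in K}\subseteq H\cap P_{\alpha_1,\alpha_2}$. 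The core of the argument is then to use the field hypothesis $K\ne\F_2$ to extract from this family an element of $H\cap U'_{\alpha_1,\alpha_2}$ whose expansion $\prod_\delta x_\delta(\xi_\delta)$ has some $\xi_{\delta_0}$ equal to a unit multiple of $l^\gamma$ for a root $\delta_0$ in the $W(\Delta)$-orbit of $\gamma$. Proposition \ref{capwithU} then places this element in $E(\sigma)$, giving $\xi_{\delta_0}\in\sigma_{\delta_0}=\sigma_\gamma$ (by Lemma \ref{Weylorbits}), and hence $l^\gamma\in\sigma_\gamma$.

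The extraction will proceed by descent through the $\vpi_{\alpha_1,\alpha_2}$-grading: the Levi (grade-$0$) part of $g_1(t)$ gets stripped off by multiplications with grade-$0$ elements from $E(\Delta,K)\le H$, the intermediate grade-$1$ unipotent part by commutators with grade-$1$ root subgroups, until we land in $U'_{\alpha_1,\alpha_2}$. Admissibility of $(\alpha_1,\alpha_2)$ supplies the grade-$0$ separation needed to distinguish distinct grade-$2$ roots outside $\Delta$, while the hypothesis $K\ne\F_2$ provides a scalar $t_0\in K^*\setminus\{1\}$ so that combinations such as $g_1(t_0)g_1(1)^{-1}$ let polynomial identities in $t$ isolate individual grade components. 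The main obstacle is the concrete execution of this descent: tracking how the $\gamma$-related coefficients propagate through each cancellation and verifying that $l^\gamma$ survives as a nontrivial coefficient rather than being killed. The case $K=\F_2$ is exactly where these scalar manipulations degenerate, and is why it is deferred to part (2) of Theorem \ref{sandwich}.
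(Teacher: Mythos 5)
Your skeleton (Corollary \ref{enoughfortandems}, the admissible pair from $(*)$, the special bitandem landing in $P_{\alpha_1,\alpha_2}$ by Lemma \ref{bitandemsinparabolic}, and finally Proposition \ref{capwithU} plus Lemma \ref{Weylorbits}) matches the paper, but the step you yourself flag as ``the main obstacle'' --- passing from $H\cap P_{\alpha_1,\alpha_2}$ to an element of $H\cap U'_{\alpha_1,\alpha_2}$ carrying $l^\gamma$ --- is exactly the content of the proof, and the descent you sketch for it does not work as stated. You propose to strip off the Levi (grade-$0$) part of $g_1(t)$ ``by multiplications with grade-$0$ elements from $E(\Delta,K)$'' and the grade-$1$ part by commutators; but the Levi component of $g_1(t)$ is an arbitrary element of the Levi factor and need not lie in $E(\Delta,K)$ (nor in $H$ in any controlled way), so there is no licence to cancel it while staying inside $H$. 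The paper avoids any descent: since $\vpi_{\alpha_1,\alpha_2}(\alpha_1)=2$, the root element $x_{\alpha_1}(1)$ already lies in $U'_{\alpha_1,\alpha_2}$, and $P_{\alpha_1,\alpha_2}$ normalises $U'_{\alpha_1,\alpha_2}$; hence the single conjugate $g_2=\leftact{g_1(t)}{x_{\alpha_1}(1)}$ is automatically in $H\cap U'_{\alpha_1,\alpha_2}$, and Lemmas \ref{tandemact}/\ref{bitandemact} give $l_2^{\gamma+\alpha_1+\alpha_2}=\pm l^{-\alpha_1}l^{\gamma}\,t+w^{\gamma+\alpha_1+\alpha_2}t^{2}$. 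The hypothesis $K\ne\F_2$ enters only here: this is a nonzero polynomial of degree $\le 2$ with zero constant term, so $|K|>2$ lets one choose $t$ where it does not vanish. Your suggested use of $K\ne\F_2$ (forming $g_1(t_0)g_1(1)^{-1}$ to ``isolate grade components'') is not what is needed and is not obviously executable.

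A second gap: your argument runs entirely through the special bitandem, whose parameters are $l^{-\alpha_2}$ and $-l^{-\alpha_1}$. When $l^{-\alpha_1}=0$ the coefficient $\pm l^{-\alpha_1}l^{\gamma}$ above vanishes (and if also $l^{-\alpha_2}=0$ the bitandem is trivial), so no choice of $t$ detects $l^\gamma$. The paper therefore splits into cases: for $l^{-\alpha_1}=0$ it uses the plain tandem $(\leftact{g}{x_{\alpha_1}(1)},\leftact{g}{e_{\alpha_1}})$, whose Lie component lies in $L_{\alpha_1,\alpha_2}$ precisely because the term $-l^{-\alpha_1}\cdot l$ in Lemma \ref{tandemact} drops out, and then conjugates $x_{\alpha_2}(1)$ into $U'_{\alpha_1,\alpha_2}$; this case needs no hypothesis on $|K|$. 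You would need to add this case (or an equivalent workaround) for the proof to close.
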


\begin{proof}
	
	Let $H$ be an overgroup of $E(\Delta,K)$ of level $\sigma$. By Corollary \ref{enoughfortandems} it suffices to prove that for any tandem $(g,l)$, if $g\in H$, then $l\in L(\sigma)$. 
	
	So let $(g,l)$ be a tandem, and let $g\in H$. Assume that $l\notin L(\sigma)$. This means that there exists a root $\gamma\in\Phi\sm\Delta$ such that $\sigma_\gamma=(0)$, but $l^\gamma\ne 0$. By $(*)$ there exists an admissible pair of orthogonal roots $\alpha_1,\alpha_2\in \Delta$ such that $(\gamma,\alpha_1)=(\gamma,\alpha_2)=-1$.
	
	\subsection*{Case 1} $l^{-\alpha_1}=0$. Consider the tandem $(g_1,l_1)=(\leftact{g}{x_{\alpha_1}(1)},\leftact{g}{e_{\alpha_1}})$.
	Note that by Lemma~\ref{tandemact} we have $l_1=e_{\alpha_1}+[l,e_{\alpha_1}]\in \Lie(P_{\alpha_1,\alpha_2})$. Therefore, by construction and by Lemma~\ref{tandemsinparabolic} we have $g_1\in H\cap P_{\alpha_1,\alpha_2}$.
	
	Next, consider the tandem $(g_2,l_2)=(\leftact{g_1}{x_{\alpha_2}(1)},\leftact{g_1}{e_{\alpha_2}})$. Since the subgroup $U'_{\alpha_1,\alpha_2}$ is normal in $P_{\alpha_1,\alpha_2}$, Proposition~\ref{capwithU} shows that
	$$
	g_2\in H\cap U'_{\alpha_1,\alpha_2}\le E(\sigma)\le S(\sigma).
	$$ 
	Then $l_2\in L(\sigma)$ by Proposition \ref{tandemsinSsigma}.
	
	On the other hand $\gamma+\alpha_1$ and $\gamma+\alpha_1+\alpha_2\in \Phi$; hence by Lemma \ref{Weylorbits} we have 
	$$
	\sigma_{\gamma+\alpha_1+\alpha_2}=\sigma_{\gamma}=0,
	$$
	but
	$$
	l_2^{\gamma+\alpha_1+\alpha_2}=(e_{\alpha_2}+[l_1,e_{\alpha_2}])^{\gamma+\alpha_1+\alpha_2}=\pm l_1^{\gamma+\alpha_1}=\pm(e_{\alpha_1}+[l,e_{\alpha_1}])^{\gamma+\alpha_1}=\pm l^\gamma\ne 0
	$$
	(in the first identity we use Lemma \ref{tandemact}, and the fact that $l_1\in\Lie(P_{\alpha_1,\alpha_2})$ hence $l_1^{-\alpha_2}=0$). This is a contradiction.
	
	\subsection*{Case 2} $l^{-\alpha_1}\ne 0$. We build the following bitandem with parameter special with respect to the pair $\alpha_1,\alpha_2$ 
	$$
	(g_1(t),l_1(t))=\big(\leftact{g}{(x_{\alpha_1}(tl^{-\alpha_2})x_{\alpha_2}(- tl^{-\alpha_1}))},\leftact{g}{(tl^{-\alpha_2}e_{\alpha_1}- tl^{-\alpha_1}e_{\alpha_2})}\big).
	$$
	After that we build the tandem $(g_2,l_2)=(\leftact{g_1(t)}{x_{\alpha_1}(1)},\leftact{g_1(t)}{e_{\alpha_1}})$,
	where $t$ is an element of $K$ to be defined later.
	
	Regardless of $t$, by Lemma \ref{bitandemsinparabolic} we have $g_1(t)\in P_{\alpha_1,\alpha_2}$, and, as in the previous case, we have $g_2\in H\cap U'_{\alpha_1,\alpha_2}\le S(\sigma)$, and $l_2\in L(\sigma)$.
	
	On the other hand, by Lemma \ref{bitandemact} we have
	$$
	l_2^{\gamma+\alpha_1+\alpha_2}=(e_{\alpha_1}+t[l_1,e_{\alpha_1}]+t^2w)^{\gamma+\alpha_1+\alpha_2}=\pm l_1^{\gamma+\alpha_2}t+w^{\gamma+\alpha_1+\alpha_2}t^2.
	$$ 
	This is a polynomial on $t$ of degree at most 2. This polynomial is not zero because from the proof of Lemma \ref{bitandemsinparabolic}, it follows that
	$$
	l_1^{\gamma+\alpha_2}=((l^{-\alpha_2}e_{\alpha_1}- l^{-\alpha_1}e_{\alpha_2})+[l,(l^{-\alpha_2}e_{\alpha_1}- l^{-\alpha_1}e_{\alpha_2})])^{\gamma+\alpha_2}=\pm l^{-\alpha_1}l^{\gamma}\ne 0.
	$$
	Therefore, since $|K|>2$, it follows that we can chose~$t$ to be distinct from the roots of this polynomial, and, as in to the previous case, we obtain a contradiction.
\end{proof}

\section{Reduction lemma}

The following Lemma will help us to reduce our problem for the ring $R$ first to its local quotients, and then to its residue fields. 
\begin{lem}
	\label{reduction}
	Let $H$ be an overgroup of $E(\Delta,R)$, and let $I\unlhd R$ be an ideal. Then
	$
	\rho_I(\lev(H))=\lev(\rho_I(H)),
	$
	where $\rho_I(\sigma)_{\alpha}=\rho_I(\sigma_{\alpha})\unlhd R/I$.
\end{lem}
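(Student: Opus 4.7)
The inclusion $\rho_I(\lev(H)) \subseteq \lev(\rho_I(H))$ is immediate, since $\rho_I$ commutes with the parametrization $\xi \mapsto x_\alpha(\xi)$: if $x_\alpha(\xi)\in H$, then $x_\alpha(\rho_I(\xi)) = \rho_I(x_\alpha(\xi)) \in \rho_I(H)$. The content of the lemma is the reverse inclusion: given $\bar\xi \in \lev(\rho_I(H))_\alpha$, I must exhibit $\eta \in R$ with $\rho_I(\eta) = \bar\xi$ and $x_\alpha(\eta) \in H$. The case $\alpha \in \Delta$ is trivial, since $\lev(H)_\alpha = R$ and both sides equal $R/I$, so the interesting case is $\alpha \notin \Delta$, for which I fix some $h \in H$ with $\rho_I(h) = x_\alpha(\bar\xi)$.

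The plan is to exploit condition $(*)$: pick an admissible orthogonal pair $\alpha_1, \alpha_2 \in \Delta$ with $(\alpha_1, \alpha) = (\alpha_2, \alpha) = -1$, and set $\beta = \alpha + \alpha_1 + \alpha_2$. A direct computation yields $\vpi_{\alpha_1,\alpha_2}(\beta) = 2$, so $\beta \in \Sigma_{\alpha_1,\alpha_2}$; moreover $\beta = s_{\alpha_2} s_{\alpha_1}(\alpha)$ lies in the $W(\Delta)$-orbit of $\alpha$, so Lemma~\ref{Weylorbits} yields $\lev(H)_\beta = \lev(H)_\alpha$. I then form the double commutator
$$
h' = [x_{\alpha_2}(1),[x_{\alpha_1}(1),h]] \in H,
$$
which by the Chevalley commutator formula reduces modulo $I$ to $x_\beta(\pm \bar\xi)$. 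If I can further produce an element $h'' \in H \cap U'_{\alpha_1,\alpha_2}$ with the same reduction, then Proposition~\ref{capwithU} expands $h''$ as $\prod_\gamma x_\gamma(\xi_\gamma)$ with $\xi_\gamma \in \lev(H)_\gamma$, and the $\beta$-component $\xi_\beta \in \lev(H)_\beta = \lev(H)_\alpha$ provides the desired lift of $\pm\bar\xi$.

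The main obstacle is exactly this passage into the abelian group $U'_{\alpha_1,\alpha_2}$: the element $h'$ is a priori only congruent to a root element modulo $I$, so pieces outside $U'_{\alpha_1,\alpha_2}$ (both non-top graded pieces of $P_{\alpha_1,\alpha_2}$ and pieces outside $P_{\alpha_1,\alpha_2}$ altogether) must be stripped off without leaving $H$. I plan to handle this by combining the tandem/bitandem machinery of Sections~4 and~5 with further commutators against elements of $E(\Delta,R) \subseteq H$: the tandem associated to $h$ has its Lie component controlled by Lemma~\ref{tandemact}, while Lemmas~\ref{tandemsinparabolic} and~\ref{parabolicasstab} confine suitable modifications inside $P_{\alpha_1,\alpha_2}$. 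The admissibility of $(\alpha_1,\alpha_2)$ from $(*)$ — which separates distinct roots of $\Sigma_{\alpha_1,\alpha_2} \setminus \Delta$ via a $\vpi_{\alpha_1,\alpha_2}$-zero root of $\Delta$ — is then precisely the combinatorial input needed to isolate the $\beta$-contribution and discard the rest, landing the result inside $X_\beta \cap H$ and yielding the required lift $\eta$.
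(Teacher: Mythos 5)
Your setup (the trivial inclusion, fixing $h\in H$ with $\rho_I(h)=x_\alpha(\bar\xi)$, invoking $(*)$, and aiming at $H\cap U'_{\alpha_1,\alpha_2}$ via Proposition~\ref{capwithU} and Lemma~\ref{Weylorbits}) matches the paper's strategy, and you correctly identify the crux: $h$ is only a root element modulo $I$, so some device is needed to land an element of $H$ carrying the coefficient $\bar\xi$ inside $U'_{\alpha_1,\alpha_2}$. But at exactly that point the proposal stops being a proof: ``I plan to handle this by combining the tandem/bitandem machinery \dots with further commutators'' is a statement of intent, not an argument, and the entire content of the lemma lives in that missing step. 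Moreover, the object you start from, the double commutator $h'=[x_{\alpha_2}(1),[x_{\alpha_1}(1),h]]$, is not one to which Lemmas~\ref{tandemsinparabolic} or \ref{bitandemsinparabolic} apply: a commutator of a unipotent root element with an arbitrary $h\in H$ has no reason to lie in $P_{\alpha_1,\alpha_2}$, and nothing in Sections~4--5 confines it there.

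The paper's actual mechanism goes the other way around: one conjugates a root element \emph{by} $h$ rather than commutating $h$ with root elements. Choosing the admissible pair as $-\alpha_1,\alpha_2$ with $(\gamma,-\alpha_1)=(\gamma,\alpha_2)=-1$, one forms the tandem $(g,l)=(\leftact{h}{x_{-\alpha_1}(1)},\leftact{h}{e_{-\alpha_1}})$, then the special bitandem $(g_1,l_1)$ with respect to $\alpha_1,\alpha_2$; Lemma~\ref{bitandemsinparabolic} guarantees $g_1\in P_{\alpha_1,\alpha_2}$ over \emph{any} ring, which is precisely the step your sketch lacks. A single further commutator $g_2=[g_1,x_{\alpha_1}(1)]$ then lies in $H\cap U'_{\alpha_1,\alpha_2}\le E(\sigma)\le S(\sigma)$ since $x_{\alpha_1}(1)\in U'_{\alpha_1,\alpha_2}$ and $U'_{\alpha_1,\alpha_2}\trianglelefteq P_{\alpha_1,\alpha_2}$. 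The explicit computation modulo $I$ gives $\ovl{g_2}=x_{\gamma+\alpha_2}(\pm\bar\xi)$, and the conclusion is drawn not by exhibiting a lift $\eta$ with $x_\beta(\eta)\in H$ (which the proof never produces) but by reducing: $\ovl{g_2}\in S(\ovl\sigma)$ forces $\bar\xi\in\ovl\sigma_{\gamma+\alpha_2}=\ovl\sigma_\gamma$ by Corollary~\ref{levelSsigma}. Without this concrete chain --- in particular without the special-bitandem step that gets you into the parabolic over an arbitrary ring --- the proposal has a genuine gap.
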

\begin{proof}
	Let $\lev(H)=\sigma$. For simplicity, we will write 
	$$
	\ovl{X}=\rho_I(X)
	$$
	regardless of the nature of $X$. Fix $\gamma\in \Phi\sm\Delta$. We should prove that $\ovl{\sigma_\gamma}=\lev(\ovl{H})_\gamma$. 
	The inclusion of the left-hand side to the right-hand side is obvious, we prove the inverse inclusion. Let $\xi\in \lev(\ovl{H})_\gamma$. This means that there exists $h\in H$ such that $\ovl{h}=x_\gamma(\xi)$. By $(*)$ there exists an admissible pair of orthogonal roots $-\alpha_1, \alpha_2\in\Delta$ such that $(\gamma,-\alpha_1)=(\gamma,\alpha_2)=-1$. Note that in this case the pair $\alpha_1,\alpha_2$ is also admissible, because it is obtained from the original pair by reflection with respect to $\alpha_1\in\Delta$.
	
	We build the tandem $(g,l)=(\leftact{h}{x_{-\alpha_1}(1)},\leftact{h}{e_{-\alpha_1}})$.
	After that we build the bitandem
	$$
	(g_1,l_1)=\big(\leftact{g}{(x_{\alpha_1}(l^{-\alpha_2})x_{\alpha_2}(- l^{-\alpha_1}))}, \leftact{g}{(l^{-\alpha_2}e_{\alpha_1}- l^{-\alpha_1}e_{\alpha_2})}\big),
	$$
	which is special with respect to the pair $\alpha_1,\alpha_2$.
	
	Finally, we set $g_2=[g_1,x_{\alpha_1}(1)]$.
	
	From Lemma \ref{bitandemsinparabolic}, normality of the subgroup $U'_{\alpha_1,\alpha_2}$ in $P_{\alpha_1,\alpha_2}$ and also Pro\-po\-si\-tion $\ref{capwithU}$ we have
	$g_2\in H\cap U'_{\alpha_1,\alpha_2}\le S(\sigma)$. It is easy to see that this implies $\ovl{g_2}\in S(\ovl{\sigma})$.
	
	On the other hand, making calculation modulo the ideal $I$, we obtain
	\begin{align*}
		\ovl{g}&=\leftact{x_\gamma(\xi)}{x_{-\alpha_1}(1)}=x_{\gamma-\alpha_1}(\pm\xi)x_{-\alpha_1}(1),\\
		\ovl{l}&=\leftact{x_\gamma(\xi)}{e_{-\alpha_1}}=e_{-\alpha_1}\pm \xi e_{\gamma-\alpha_1},
	\end{align*}
	this implies
	\begin{align*}
		\ovl{g_1}&=\leftact{x_{\gamma-\alpha_1}(\pm\xi)x_{-\alpha_1}(1)}{(x_{\alpha_1}(0)x_{\alpha_2}(-1))}=\leftact{x_{\gamma-\alpha_1}(\pm\xi)x_{-\alpha_1}(1)}{x_{\alpha_2}(-1)}
		\\
		&=\leftact{x_{\gamma-\alpha_1}(\pm\xi)}{x_{\alpha_2}(-1)}=x_{\gamma-\alpha_1+\alpha_2}(\pm\xi)x_{\alpha_2}(-1),
	\end{align*}
	and, finally,
	\begin{gather*}
		\ovl{g_2}=[x_{\gamma-\alpha_1+\alpha_2}(\pm\xi)x_{\alpha_2}(-1),x_{\alpha_1}(1)]=[x_{\gamma-\alpha_1+\alpha_2}(\pm\xi),x_{\alpha_1}(1)]=x_{\gamma+\alpha_2}(\pm\xi).
	\end{gather*}
	Therefore, $x_{\gamma+\alpha_2}(\pm\xi)\in S(\ovl{\sigma})$, i.e. by Corollary \ref{levelSsigma} and Lemma \ref{Weylorbits} we have $\xi\in \ovl{\sigma}_{\gamma+\alpha_2}=\ovl{\sigma}_{\gamma}$. This concludes the proof.
\end{proof}

\section{Reduction to local rings with nilpotent\\ maximal ideal}

For Noetherian rings Lemma \ref{reduction} allows to immediately reduce our problem to local rings with nilpotent maximal ideal. All we need is the following ring-theoretic observation.

By $\Max(R)$ we denote the set of maximal ideals of the ring~$R$.

\begin{prop}
	\label{powersofmaximal} 
	Let $R$ be a Noetherian ring\textup, $\sigma$ be a net of ideals. Then
	$$
	S(\sigma)=\bigcap_{\M\in\Max(R),\, k\in \N} \rho^{-1}_{\M^k}\big(S(\rho_{\M^k}(\sigma))\big).
	$$
\end{prop}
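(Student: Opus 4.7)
The plan is to prove the two inclusions separately. The inclusion ``$\subseteq$'' is immediate from functoriality: if $g \in S(\sigma)$ then $\leftact{g}{L(\sigma)} \le L(\sigma)$, and applying the reduction $\rho_{\M^k}$ (which sends $L(\sigma)$ onto $L(\rho_{\M^k}(\sigma))$ since this is true coefficient by coefficient in the Chevalley basis), we get $\leftact{\rho_{\M^k}(g)}{L(\rho_{\M^k}(\sigma))} \le L(\rho_{\M^k}(\sigma))$, and the same for $g^{-1}$; thus $\rho_{\M^k}(g) \in S(\rho_{\M^k}(\sigma))$ for all $\M$ and $k$.

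For the converse, the key ring-theoretic input is the identity
$$
I = \bigcap_{\M \in \Max(R),\, k \in \N} (I + \M^k)
$$
for every ideal $I \unlhd R$ of a Noetherian ring $R$. This is standard: localising at $\M$, Krull's intersection theorem applied to the finitely generated $R_\M$-module $R_\M/IR_\M$ gives $\bigcap_k (IR_\M + \M^k R_\M) = IR_\M$, so any element of the intersection lies in $IR_\M \cap R$ for every $\M$; but for Noetherian $R$ the map $R/I \to \prod_{\M} (R/I)_\M$ is injective, so $\bigcap_\M (IR_\M \cap R) = I$.

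Now assume $g$ lies in the right-hand side. By Lemma \ref{Ssigmadesrciption} it suffices to show that $\leftact{g^{\pm 1}}{v} \in L(\sigma)$ for every $v \in L(\sigma)$ (indeed we need this only for $v \in L'(\sigma)$, but there is no harm in checking it for all $v \in L(\sigma)$). Fix such $v$ and a root $\gamma \in \Phi$; we must show that $(\leftact{g}{v})^\gamma \in \sigma_\gamma$. For each $\M \in \Max(R)$ and $k \in \N$, since $\rho_{\M^k}(v) \in L(\rho_{\M^k}(\sigma))$ and $\rho_{\M^k}(g) \in S(\rho_{\M^k}(\sigma))$ by hypothesis, we have $\rho_{\M^k}(\leftact{g}{v}) \in L(\rho_{\M^k}(\sigma))$, and reading off the $\gamma$-coefficient gives $(\leftact{g}{v})^\gamma \in \sigma_\gamma + \M^k$. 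Intersecting over all $\M$ and $k$ and applying the identity above to $I = \sigma_\gamma$ yields $(\leftact{g}{v})^\gamma \in \sigma_\gamma$. The argument for $g^{-1}$ is identical, so $g \in S(\sigma)$.

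I do not anticipate a real obstacle here; the only substantive ingredient is the Noetherian intersection identity, everything else is essentially bookkeeping using that $L(\sigma)$ is defined coefficient-wise in the Chevalley basis and that the adjoint action is compatible with base change.
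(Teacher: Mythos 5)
Your proposal is correct and follows essentially the same route as the paper: both directions rest on the identity $\sigma_\gamma=\bigcap_{\M,k}(\sigma_\gamma+\M^k)$, obtained from Krull's intersection theorem together with injectivity of the map to the product of localisations, and then on reading off $L(\sigma)=\bigcap_{\M,k}\rho^{-1}_{\M^k}\bigl(L(\rho_{\M^k}(\sigma))\bigr)$ coefficientwise in the Chevalley basis. The paper merely phrases the last step as ``the right-hand side stabilises each $\rho^{-1}_{\M^k}(L(\rho_{\M^k}(\sigma)))$, hence their intersection,'' which is your coefficient-by-coefficient argument in condensed form.
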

\begin{proof}
	The inclusion of the left-hand side to the right-hand side is obvious, we prove the inverse inclusion.	
	For any Noetherian ring $S$, the natural map
	$$
	S\to \prod_{\M\in\Max(S),\, k\in \N} S/\M^k
	$$ 
	is injective (this follows from the injectivity of the map to the product of localisations and the Krull Intersection Theorem). Applying this to the rings $S=R/\sigma_{\alpha}$ and using that $\M^k\sub \rho_{\sigma_{\alpha}}^{-1}(\rho_{\sigma_{\alpha}}(\M)^k)$, we obtain 
	$$
	\sigma_{\alpha}=\bigcap_{\M\in\Max(R),\, k\in \N} \rho^{-1}_{\M^k}(\rho_{\M^k}(\sigma_{\alpha})).
	$$
	Hence
	$$
	L(\sigma)=\bigcap_{\M\in\Max(R),\, k\in \N} \rho^{-1}_{\M^k}\big(L(\rho_{\M^k}(\sigma))\big).
	$$
	The right hand side of the identity to be proved, stabilises each of the subalgebras $\rho^{-1}_{\M^k}(L(\rho_{\M^k}(\sigma)))$; hence it stabilises $L(\sigma)$, i.e. is contained in $S(\sigma)$.
\end{proof}

\begin{cor}
	\label{tolocal} Let $R$ be a Noetherian ring\textup, and let $H$ be an overgroup of $E(\Delta,R)$. Assume that $\rho_{\M^k}(H)$ is pseudo-standard for any $\M\in\Max(R)$ and any $k\in\N$. Then $H$ is pseudo-standard.
\end{cor}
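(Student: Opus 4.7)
The plan is to combine Proposition \ref{powersofmaximal} with Lemma \ref{reduction}; everything fits together with essentially no extra work, so the ``proof'' is really just a diagram chase.

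Set $\sigma=\lev(H)$. We need to show $H\le S(\sigma)$. By Proposition \ref{powersofmaximal},
$$
S(\sigma)=\bigcap_{\M\in\Max(R),\,k\in\N}\rho^{-1}_{\M^k}\bigl(S(\rho_{\M^k}(\sigma))\bigr),
$$
so it is enough to check, for each $\M$ and $k$, that $\rho_{\M^k}(H)\le S(\rho_{\M^k}(\sigma))$.

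Fix $\M$ and $k$. By Lemma \ref{reduction} applied to the ideal $\M^k$, we have $\rho_{\M^k}(\sigma)=\rho_{\M^k}(\lev(H))=\lev(\rho_{\M^k}(H))$. Hence the desired inclusion $\rho_{\M^k}(H)\le S(\rho_{\M^k}(\sigma))$ is exactly the pseudo-standard description of $\rho_{\M^k}(H)$, which holds by hypothesis.

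There is no real obstacle here: Proposition \ref{powersofmaximal} does the ring-theoretic work of reducing $S(\sigma)$ to the family of $S(\rho_{\M^k}(\sigma))$, while Lemma \ref{reduction} guarantees that the relevant levels match up on the nose. All one has to do is assemble these two statements in the correct order.
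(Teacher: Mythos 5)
Your proof is correct and is exactly the argument the paper intends: the paper's own proof is the one-line citation ``Proposition \ref{powersofmaximal} $+$ Lemma \ref{reduction}'', and you have simply written out how those two results combine. No gaps.
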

\begin{proof}
	Proposition \ref{powersofmaximal} $+$ Lemma \ref{reduction}.
\end{proof}

\section{Reduction to fields}

In order to reduce our problem to the case of a field, we need several technical lemmas.
\begin{lem}
	\label{nilpotent} 
	Let $H$ be an overgroup of $E(\Delta,R)$ of level $\sigma,$ and let $I\unlhd R$ be a nilpotent ideal. Then
	$$
	(HT(\Phi,R))\cap G(\Phi,R,I)\le T(\Phi,R)E(\sigma).
	$$
\end{lem}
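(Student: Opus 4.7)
The plan is to argue by induction on the nilpotency index $n$ of $I$. The case $n=1$ (that is, $I=0$) is vacuous, as $G(\Phi,R,0)=\{1\}\le TE(\sigma)$.

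For the inductive step with $n\ge 2$, I would reduce $g=ht\in HT\cap G(\Phi,R,I)$ modulo $I^{n-1}$: by Lemma~\ref{reduction} the level of $\rho_{I^{n-1}}(H)$ is $\rho_{I^{n-1}}(\sigma)$, and since $(I/I^{n-1})^{n-1}=0$ the inductive hypothesis in the ring $R/I^{n-1}$ yields $\rho_{I^{n-1}}(g)\in\rho_{I^{n-1}}(T)\cdot E(\rho_{I^{n-1}}(\sigma))$. Lifting torus elements (possible because $I^{n-1}$ sits inside the Jacobson radical) and elementary generators of $E(\sigma)$ back to $R$, I produce $t_1\in T$ and $e_1\in E(\sigma)$ with $g_1:=g(t_1e_1)^{-1}\in G(\Phi,R,I^{n-1})$. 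Using $E(\sigma)\le H$ and the identity $TE(\sigma)=E(\sigma)T$ (which holds since $T$ normalises each root subgroup $X_\alpha$ and $\chi_\alpha(T)\sigma_\alpha=\sigma_\alpha$ because $\chi_\alpha(t)$ is a unit), a short rearrangement places $g_1$ in $HT$. Since $(I^{n-1})^2\subseteq I^n=0$, the ideal $I^{n-1}$ has nilpotency index at most $2$, so for $n\ge 3$ a second application of the inductive hypothesis yields $g_1\in TE(\sigma)$, and thus $g\in TE(\sigma)$.

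The argument therefore reduces to the base case $I^2=0$, which is the essential content. Here $G(\Phi,R,I)$ is abelian, since $[G(\Phi,R,I),G(\Phi,R,I)]\le G(\Phi,R,I^2)=\{1\}$, and the identification $G(\Phi,R,I)\cong L(\Phi,R)\otimes_R I=(D\otimes I)\oplus\bigoplus_\alpha (Re_\alpha\otimes I)$ (in the simply connected case; other isogeny types reduce to it by a faithfully flat descent as in the proof of Lemma~\ref{parabolicasstab}) gives each element a unique normal form
$$
g\;=\;t_0\prod_{\alpha\in\Phi}x_\alpha(\xi_\alpha),\qquad t_0\in T\cap G(\Phi,R,I),\ \xi_\alpha\in I.
$$
Conjugating by $t^{-1}$ rewrites
$$
h\;=\;gt^{-1}\;=\;(t_0t^{-1})\prod_\alpha x_\alpha(\chi_\alpha(t)\xi_\alpha)\;\in\;H,
$$
and since $\chi_\alpha(t)\in R^*$ it suffices to show $\xi_\alpha\in\sigma_\alpha$ for every $\alpha\in\Phi$. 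For $\alpha\in\Delta$ this is automatic, while for $\alpha\in\Phi\sm\Delta$ I would invoke condition $(*)$ to select an admissible orthogonal pair $\alpha_1,\alpha_2\in\Delta$ with $(\alpha_1,\alpha)=(\alpha_2,\alpha)=1$ (so that $\alpha\in\Sigma_{\alpha_1,\alpha_2}$), and then perform successive commutator manipulations of $h$ with $x_{-\alpha_1}(1)$ and $x_{-\alpha_2}(1)$ in the spirit of the passage to $g_2$ in the proof of Proposition~\ref{field}; in the linearised setting $I^2=0$ this produces an element of $H\cap U'_{\alpha_1,\alpha_2}$ whose coefficient on the root $\alpha$ equals $\pm\xi_\alpha$, so Proposition~\ref{capwithU} forces $\xi_\alpha\in\sigma_\alpha$.

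The main obstacle is the extraction step in the base case: only the aggregate product $\prod_\alpha x_\alpha(\xi_\alpha)$ is known (up to a torus factor) to lie in $H$, and one must isolate each coefficient. The hypothesis $I^2=0$ is crucial here, since it linearises commutators and conjugations and decouples the various root subgroups, thereby allowing the tandem-style landing in $U'_{\alpha_1,\alpha_2}$ so that Proposition~\ref{capwithU} delivers the conclusion. A secondary bookkeeping concern in the inductive step is verifying that $g_1$ remains in $HT$; this uses $E(\sigma)\le H$ together with the $T$-normalisation of each root subgroup.
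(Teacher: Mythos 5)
Your proposal follows the paper's proof essentially step for step: induction on the nilpotency index, with the inductive step reducing to the square‑zero case via Lemma~\ref{reduction} together with the surjectivity of $T(\Phi,R)\to T(\Phi,R/I^{k-1})$ and of $E(\sigma)\to E(\rho_{I^{k-1}}(\sigma))$, and, in the base case $I^2=0$, the Abe--Suzuki normal form of $G(\Phi,R,I)$ followed by double commutators with root elements from $\Delta$ that land in $H\cap U'_{\alpha_1,\alpha_2}$, where Proposition~\ref{capwithU} applies. One wrinkle: by passing from $ht\in G(\Phi,R,I)$ to $h=gt^{-1}$ you make the torus component of the normal form equal to $t_0t^{-1}$, which need not lie in $T(\Phi,R,I)$; the paper instead decomposes $hg\in G(\Phi,R,I)$ itself, so its torus part is congruent to $1$, the double commutator splits cleanly into a product over roots, and the membership of the commutator in $H$ is checked separately via $[x_{\alpha_2}(1),g]\in X_{\alpha_2}$. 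With your normalisation the term $[x_{-\alpha_2}(1),t_0t^{-1}]$ is a root element with a non‑nilpotent parameter whose conjugation action on the congruence subgroup must then be tracked (it turns out only to shift coefficients to non‑roots or scale them by units, so this is repairable, but it is precisely the bookkeeping your sketch leaves implicit). Finally, to pass from ``the product lies in $H\cap U'_{\alpha_1,\alpha_2}$'' to ``each coefficient lies in the corresponding ideal'' you need the \emph{proof} of Proposition~\ref{capwithU}, not merely its statement, as the paper itself notes.
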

\begin{proof}
	The proof is by induction on $k$, where $k$ is the smallest number such that $I^k=0$. 
	The base of induction is $k=2$. Let $hg\in G(\Phi,R,I)$, where $h\in H$ and $g\in T(\Phi,R)$. Since the ideal $I$ is nilpotent, it follows that for an arbitrary choice of an order on the system $\Phi$ we have
	$$
	G(\Phi,R,I)\le T(\Phi,R,I)U(\Phi,R,I)U^-(\Phi,R,I),
	$$
	where
	\begin{align*}
		T(\Phi,R,I)&=T(\Phi,R)\cap G(\Phi,R,I),\\
		U(\Phi,R,I)&=\<x_{\alpha}(\xi) \colon \alpha\in\Phi^+,\, \xi\in I \>,\\
		U^-(\Phi,R,I)&=\<x_{\alpha}(\xi) \colon \alpha\in\Phi^-,\, \xi\in I \>.
	\end{align*}
	This well-known statement can be found, for example, in \cite[Proposition 2.3]{AbeSuzuki}. Therefore, we have
	$$
	hg=g'\prod_{\beta\in\Phi}x_{\beta}(\xi_{\beta}),
	$$
	where $g'\in T(\Phi,R,I)$ and $\xi_{\beta}\in I$. We show that $\xi_{\gamma}\in \sigma_{\gamma}$ for any $\gamma\in \Phi\sm\Delta$. By $(*)$ there exists an admissible pair of orthogonal roots ${\alpha_1, \alpha_2\in\Delta}$ such that $(\gamma,\alpha_1)=(\gamma,\alpha_2)=-1$. Set $g_1=[x_{\alpha_1}(1),[x_{\alpha_2}(1),hg]]$.
	
	Note that $g_1\in H$. 
	Indeed,
	$$
	[x_{\alpha_2}(1),hg]=[x_{\alpha_2}(1),h]\cdot \leftact{h}{[x_{\alpha_2}(1),g]}=[x_{\alpha_2}(1),h]\cdot \leftact{h}{x_{\alpha_2}(\eps)}\in H.
	$$ 
	
	On the other hand, since $I^2=0$, it follows that the subgroup $G(\Phi,R,I)$ is normal and Abelian. Hence
	\begin{align*}
		g_1&\!=\![x_{\alpha_1}(1),[x_{\alpha_2}(1),g']]\!\cdot\!\prod_{\beta\in\Phi}\!\big[x_{\alpha_1}(1),[x_{\alpha_2}(1),x_{\beta}(\xi_{\beta})]\big]
		\\
		&\!=\!\!\prod_{\beta\in\Phi}\!\big[x_{\alpha_1}(1),[x_{\alpha_2}(1),x_{\beta}(\xi_{\beta})]\big]\!=\!x_{\alpha_1}(\zeta)\!\!\prod_{\{\beta\in\Phi\colon (\beta,\alpha_1)=(\beta,\alpha_2)=-1\}}\!\!x_{\beta+\alpha_1+\alpha_2}(\pm\xi_{\beta}).
	\end{align*}
	
	Here the first factor is $[x_{\alpha_1}(1),[x_{\alpha_2}(1),x_{-\alpha_2}(\xi_{-\alpha_2})]]$, because, using teh relation $\xi_{-\alpha_2}^2=0$, we obtain 
	$$
	[x_{\alpha_2}(1),x_{-\alpha_2}(\xi_{-\alpha_2})]\in x_{\alpha_2}(\pm\xi_{-\alpha_2})T(\Phi,R,I).
	$$
	This can be verified by direct calculation in $\SL(2,R)$.
	
	Therefore, we have $g_1\in H\cap U'_{\alpha_1,\alpha_2}$, and from the proof of Proposition \ref{capwithU} it follows that $\xi_\gamma\in\sigma_{\gamma+\alpha_1+\alpha_2}=\sigma_{\gamma}$ (Lemma \ref{Weylorbits}).
	
	Now we do the induction step from $k-1$ to $k$. By the inductive hypothesis, using Lemma \ref{reduction}, we obtain
	\begin{align*}
		\rho_{I^{k-1}}&(HT(\Phi,R)\cap G(\Phi,R,I))
		\\
		&\le\big(\rho_{I^{k-1}}(H)T(\Phi,R/I^{k-1})\big)\cap G\big(\Phi,R/I^{k-1},\rho_{I^{k-1}}(I)\big)
		\\
		&\le T(\Phi,R/I^{k-1})E(\rho_{ I^{k-1}
		}(\sigma)).
	\end{align*}
	
	Since $E(\sigma)$ maps surjectively onto $E(\rho_{I^{k-1}}(\sigma)
	)$, and $T(\Phi,R)\!$ maps surjectively onto $T(\Phi,R/I^{k-\!1})$ (because $I$ is nilpotent), it follows that
	$$
	HT(\Phi,R)\cap G(\Phi,R,I)\le G(\Phi,R,I^{k-1})T(\Phi,R)E(\sigma).
	$$
	Therefore,
	\begin{align*}
		HT(\Phi,R)\!\cap\! G(\Phi,R,I)&\!\le\!\! G(\Phi,R,I^{k\!-\!1})T(\Phi,R)E(\sigma)\!\cap\! HT(\Phi,R)
		\\
		&\!\le\! \!\big(G(\Phi,R,I^{k\!-\!1})\!\cap\! HT(\Phi,R)E(\sigma)T(\Phi,R)\big)T(\Phi,R)E(\sigma).
	\end{align*}
	Using that $T(\Phi,R)$ normalises $E(\sigma)$, we see that the last expression is equal to
	\begin{align*}
		(G(\Phi,R,I^{k-1})&\cap HE(\sigma)T(\Phi,R))T(\Phi,R)E(\sigma)
		\\
		&=(G(\Phi,R,I^{k-1})\cap HT(\Phi,R))T(\Phi,R)E(\sigma).
	\end{align*}
	Since $(I^{k-1})^2=0$, it follows that we can use the base of induction to conclude that the last expression is contained in
	\begin{equation*}
		T(\Phi,R)E(\sigma)T(\Phi,R)E(\sigma)=T(\Phi,R)E(\sigma).\qedhere
	\end{equation*}
\end{proof}

Let $\Delta'\le\Phi$ be a closed set of roots (i.e if $\alpha,\beta\in\Delta'$ and $\alpha+\beta\in\Phi$, then $\alpha+\beta\in\Delta'$), and let $\Delta\le \Delta'$. For any ring $R$ consider the net of ideals $\sigma_{\Delta'}$ defined as follows
$$
(\sigma_{\Delta'})_\gamma=\begin{cases}
	R,&\gamma\in\Delta',\\
	(0),&\gamma\notin\Delta'.
\end{cases}
$$ 
Set $E(\Delta',R)=E(\sigma_{\Delta'})$. Then the Zariski sheafification of the presheaf $T(\Phi,-)E(\Delta',-)$ is a semidirect product of the extended Chevalley group that corresponds to the subsystem $\Delta'\cap(-\Delta')$ and the unipotent subgroup that corresponds to set $\Delta'\sm(-\Delta')$. We denote this group by $GG(\Delta',R)$. 

Here by extended Chevalley group we mean the group
$$
GG(\Delta'\cap(-\Delta'),R)=T(\Phi,R)G(\Delta'\cap(-\Delta'),R).
$$
We show that the group $GG(\Delta',R)$ is the group of points of a smooth affine group scheme. Note that there exists a closed subtorus $T_0(R)\le T(\Phi,R)$ (possibly trivial), such that
$$
T(\Phi,R)=T(\Delta'\cap(-\Delta'),R)\times T_0(R),
$$
where the factor $T(\Delta'\cap(-\Delta'),R)$ is a toric subgroup of the subsystem subgroup $G(\Delta'\cap(-\Delta'),R)\le G(\Phi,R)$. This follows from the fact that any surjective homomorphism of lattices splits, and from the duality between tori and lattices. Therefore, we have $GG(\Delta'\cap(-\Delta'),R)=G(\Delta'\cap(-\Delta'),R)\leftthreetimes T_0(R)$. In particular, this  is the group of points of a smooth affine group scheme, and the same is true for the group $GG(\Delta',R)$, because its second factor is isomorphic as a scheme to an affine space. 

We also set $\tilde{G}(\Delta',R)=S(\sigma_{\Delta'})$. It is clear that $GG(\Delta',R)\le \tilde{G}(\Delta',R)$.

\begin{lem}
	\label{connectedcomponent}
	Let $R=L$ be an algebraically closed field. Then the subgroups $GG(\Delta',L)$ and $ \tilde{G}(\Delta',L)$ are closed in $G(\Phi,L),$ and the subgroup $GG(\Delta',L)$ is a connected component of identity of the group  $\tilde{G}(\Delta',L)$.
\end{lem}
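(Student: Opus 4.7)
The plan is to verify the three assertions separately: closedness of $\tilde G(\Delta',L)$, closedness and connectedness of $GG(\Delta',L)$, and the identification $GG(\Delta',L)=\tilde G(\Delta',L)^\circ$.

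First, $\tilde G(\Delta',L)=\Stab_{G(\Phi,L)}(L(\sigma_{\Delta'}))$ is the stabiliser of a finite-dimensional subspace under the algebraic adjoint action, so it is Zariski closed, being cut out by vanishing of the appropriate matrix coefficients. For $GG(\Delta',L)$, the discussion just before the lemma exhibits $GG(\Delta',-)$ as a smooth affine group scheme admitting a morphism into $G(\Phi,-)$; over the algebraically closed field $L$ the image of a homomorphism of affine algebraic groups is Zariski closed (a standard fact, cf.\ Borel, \emph{Linear Algebraic Groups}), so $GG(\Delta',L)$ is closed. Connectedness of $GG(\Delta',L)$ follows from its semidirect product structure: the extended Chevalley group $T(\Phi,L)\cdot G(\Delta'\cap(-\Delta'),L)$ is connected (product of a connected torus and a connected Chevalley group over the algebraically closed field $L$), and the unipotent subgroup associated to $\Delta'\sm(-\Delta')$ is isomorphic as a variety to an affine space.

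Since $GG(\Delta',L)$ is connected and contained in $\tilde G(\Delta',L)$, it lies inside $\tilde G(\Delta',L)^\circ$. To obtain the reverse inclusion it suffices to check equality of dimensions, equivalently, that the corresponding Lie algebras agree inside $\Lie G(\Phi,L)$. The Lie algebra of $GG(\Delta',L)$ --- after the faithfully flat descent to the simply connected case used in the proof of Lemma~\ref{parabolicasstab} --- is canonically identified with $L(\sigma_{\Delta'})=D\oplus\bigoplus_{\alpha\in\Delta'}L\cdot e_\alpha$. Differentiating the stabiliser condition shows that the (scheme-theoretic) Lie algebra of $\tilde G(\Delta',L)$, which contains the tangent space of $\tilde G(\Delta',L)^\circ$, is the normaliser of $L(\sigma_{\Delta'})$, namely $\{v\in\Lie G(\Phi,L)\colon [v,L(\sigma_{\Delta'})]\le L(\sigma_{\Delta'})\}$.

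The main obstacle is therefore the self-normalisation of $L(\sigma_{\Delta'})$ in $L(\Phi,L)$. This should follow in the spirit of Lemma~\ref{L'}(2): given $v$ in the normaliser and a root $\gamma\in\Phi\sm\Delta'$, by $\Delta^\perp=\emp$ one finds $\alpha\in\Delta$ with $(\alpha,\gamma)=-1$, so that $\alpha+\gamma=s_\alpha(\gamma)\in\Phi$. Since $\sigma_{\Delta'}$ is a net of ideals (because $\Delta'$ is closed and $\Delta\le\Delta'$), Lemma~\ref{Weylorbits} gives $(\sigma_{\Delta'})_{\alpha+\gamma}=(\sigma_{\Delta'})_\gamma=(0)$, i.e.\ $\alpha+\gamma\notin\Delta'$; extracting the $(\alpha+\gamma)$-coefficient of the bracket $[v,e_\alpha]\in L(\sigma_{\Delta'})$ then forces $v^\gamma=0$, proving $v\in L(\sigma_{\Delta'})$. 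The isogeny reduction mimicking the second half of the proof of Lemma~\ref{parabolicasstab} (together with the fact that under the central isogeny $G_{\SC}\to G$ the identity components match up along $\pi(\tilde G_{\SC}^\circ)=\tilde G^\circ$) completes the identification $GG(\Delta',L)=\tilde G(\Delta',L)^\circ$.
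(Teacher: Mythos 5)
Your proof is correct, and the overall skeleton (reduce to comparing the Lie algebras of the smooth group $GG(\Delta',L)$ and of the stabiliser $\tilde{G}(\Delta',L)$, then conclude by a dimension/openness argument) is the same as the paper's. But the way you compute $\Lie(\tilde{G}(\Delta',L))$ is genuinely different. The paper identifies this Lie algebra with $\tilde{G}(\Delta',L[\eps]/(\eps^2))\cap G(\Phi,L[\eps]/(\eps^2),(\eps))$ and then applies the purely group-theoretic Lemma~\ref{nilpotent} (with $H=\tilde G(\Delta',L[\eps]/(\eps^2))$ and $I=(\eps)$, using $\lev(\tilde G)=\sigma_{\Delta'}$ from Corollary~\ref{levelSsigma}) to squeeze it into $GG(\Delta',L[\eps]/(\eps^2))\cap G(\Phi,L[\eps]/(\eps^2),(\eps))=\Lie(GG(\Delta',L))$. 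This reuses machinery the paper needs anyway and, importantly, works uniformly for every isogeny type, sidestepping the Warning that $L(\Phi,R)$ need not be the tangent algebra of $G(\Phi,R)$. You instead differentiate the stabiliser condition to get the normaliser $\{v\colon [v,L(\sigma_{\Delta'})]\le L(\sigma_{\Delta'})\}$ and prove self-normalisation of $L(\sigma_{\Delta'})$ by the same coefficient-extraction trick as in Lemma~\ref{L'}(2); this is a clean, more classical Lie-theoretic computation, but it forces you to first pass to the simply connected cover (so that $d\,\mathrm{Ad}=\ad$ on $L(\Phi,L)$) and then transport the conclusion along the central isogeny, which over an algebraically closed field does work as you sketch ($\pi$ is surjective on $L$-points, carries $GG_{\SC}$ onto $GG$ and identity components onto identity components). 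Both arguments are sound; the paper's buys isogeny-independence for free, yours makes the Lie-algebra identity $\Lie(\tilde G)=L(\sigma_{\Delta'})$ explicit and self-contained.
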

\begin{proof}
	The statement about being closed in $G(\Phi,L)$ is obvious. It is also obvious that $GG(\Delta',L)$ is connected. It remains to show that $GG(\Delta',L)$ is open in $\tilde{G}(\Delta',L)$. Since $GG(\Delta',L)$ is smooth, by Corollary 5.6 in the book \cite{DemazureGabriel} it follows that it suffices to show that the Lie algebras of these groups coincide. 
	
	Note that $\tilde{G}(\Delta',-)$ is a subscheme (even over $\Z$) in $G(\Phi,-)$. Thus the Lie algebra in question can be identified with the group
	$$
	\tilde{G}(\Delta',L[\eps]/(\eps^2))\cap G(\Phi,L[\eps]/(\eps^2),(\eps)).
	$$
	By Corollary \ref{levelSsigma} we have
	$
	\lev(\tilde{G}(\Delta',L[\eps]/(\eps^2)))=\sigma_{\Delta'}.
	$
	Then by Lemma~\ref{nilpotent} we have
	\begin{align*}
		\tilde{G}(\Delta',&L[\eps]/(\eps^2))\cap G(\Phi,L[\eps]/(\eps^2),(\eps))
		\\
		&\le \big(T(\Phi,L[\eps]/(\eps^2))E(\Delta',L[\eps]/(\eps^2))\big)\cap G(\Phi,L[\eps]/(\eps^2),(\eps))
		\\
		&\le GG(\Delta',L[\eps]/(\eps^2))\cap G(\Phi,L[\eps]/(\eps^2),(\eps)).
	\end{align*}
	This concludes the proof.
\end{proof}

We denote by $N(\Phi,-)\le G(\Phi,-)$ the scheme normaliser of the group subscheme $T(\Phi,-)$. There is a natural homomorphism from the scheme $N(\Phi,-)$ to the Weyl group $W(\Phi)$ viewed as constant scheme. This homomorphism is surjective on point, and its kernel is a subsheme~$T(\Phi,R)$.

We denote by $\ovl{W}(\Phi)$ the image of the group $N(\Phi,\Z)$ under homomorphism $G(\Phi,\Z)\to G(\Phi,R)$ induced by the unique ring homomorphism $\Z\to R$. We have dropped the letter $R$ in this notation because this group almost independent of the ring: the group $\ovl{W}(\Phi)$ is isomorphic to the group $N(\Phi,\Z)$ if $2\ne 0$ on $R$; otherwise, it is isomorphic to the group $W(\Phi)$. 

Next, let $\ovl{W}(\Phi,\Delta')\le \ovl{W}(\Phi)$ be the preimage of the group 
$$
\Stab_{W(\Phi)}(\Delta')\le W(\Phi)
$$
under the natural homomorphism.
\begin{lem}
	\label{decompose} 
	Let $R=K$ be a field. Then $\tilde{G}(\Delta',K)=GG(\Delta',K)\ovl{W}(\Phi,\Delta')$.
\end{lem}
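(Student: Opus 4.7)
Plan. The inclusion $GG(\Delta',K)\,\ovl{W}(\Phi,\Delta')\subseteq \tilde{G}(\Delta',K)$ is routine: elements of $GG(\Delta',K)$ stabilise $L(\sigma_{\Delta'})$ by the definition of $\tilde{G}$, while a representative $n_w\in\ovl{W}(\Phi,\Delta')$ of $w\in\Stab_{W(\Phi)}(\Delta')$ acts on the Chevalley basis by $e_\alpha\mapsto\pm e_{w\alpha}$ and preserves $D$; since $w\Delta'=\Delta'$, this action stabilises $L(\sigma_{\Delta'})$.

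For the reverse inclusion I would first treat the case $K=\ovl{K}$ algebraically closed. By Lemma~\ref{connectedcomponent}, $GG(\Delta',K)$ is the identity component of $\tilde{G}(\Delta',K)$, hence is a normal subgroup of finite index. The split torus $T=T(\Phi,K)$ lies in $GG(\Delta',K)$ and is a maximal torus there (being maximal already in the ambient $G(\Phi,K)$). For $g\in \tilde{G}(\Delta',K)$ the conjugate $gTg^{-1}$ lies in $GG(\Delta',K)$ by normality and is again a maximal torus, so by the standard conjugacy of maximal tori in a connected algebraic group over an algebraically closed field there exists $h\in GG(\Delta',K)$ with $hgTg^{-1}h^{-1}=T$, that is, $hg\in N_{G(\Phi,K)}(T)$. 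Writing $hg=t\cdot n_w$ with $t\in T$ and $n_w\in \ovl{W}(\Phi)$ a representative of some $w\in W(\Phi)$, the containment $hg\in\tilde{G}(\Delta',K)$ forces $n_w\in\tilde{G}(\Delta',K)$; since $n_w L(\sigma_{\Delta'})=L(\sigma_{w\Delta'})$, we conclude $w\Delta'=\Delta'$ and therefore $n_w\in\ovl{W}(\Phi,\Delta')$. Hence $g=h^{-1}t\cdot n_w\in GG(\Delta',K)\,\ovl{W}(\Phi,\Delta')$.

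For an arbitrary field $K$, apply this decomposition over $\ovl{K}$ to $g\in\tilde{G}(\Delta',K)$, obtaining $g=h'n_w$ with $h'\in GG(\Delta',\ovl{K})$ and $n_w\in\ovl{W}(\Phi,\Delta')$. Since $n_w\in G(\Phi,\Z)$ is fixed by every $\sigma\in\mathrm{Gal}(\ovl{K}/K)$ and $g$ is $K$-rational, the element $h'=gn_w^{-1}$ is Galois-invariant. The scheme $GG(\Delta',-)$ is a smooth affine group scheme over $\Z$ by its explicit description as the semidirect product stated immediately before the lemma, so $GG(\Delta',\ovl{K})^{\mathrm{Gal}(\ovl{K}/K)}=GG(\Delta',K)$ by standard Galois descent, giving $h'\in GG(\Delta',K)$ as required.

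The main obstacle is the passage from the algebraically closed case to arbitrary $K$: the structural inputs (Lemma~\ref{connectedcomponent} on identity components, and conjugacy of maximal tori) are only available over an algebraically closed base, so the whole decomposition has to be produced over $\ovl{K}$ first, and only then the $K$-rationality of the particular factor $gn_w^{-1}$ is recovered via smoothness of the scheme $GG(\Delta',-)$. One technical point to double-check is that $T(\Phi,\ovl{K})$ is in fact a maximal torus of $GG(\Delta',\ovl{K})$ and not merely a torus, which follows because it is already maximal in the larger group $G(\Phi,\ovl{K})$.
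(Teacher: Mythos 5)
Your treatment of the algebraically closed case is essentially identical to the paper's: normality of the identity component $GG(\Delta',L)$ from Lemma \ref{connectedcomponent}, conjugacy of maximal tori to land in $N_{G(\Phi,L)}(T)=T(\Phi,L)\ovl{W}(\Phi)$, and the observation that a Weyl-group representative stabilising $L(\sigma_{\Delta'})$ must stabilise $\Delta'$. The only divergence is the descent to an arbitrary field $K$. The paper writes $\tilde{G}(\Delta',K)=\tilde{G}(\Delta',L)\cap G(\Phi,K)$ and distributes the intersection, using that $\ovl{W}(\Phi,\Delta')$ consists of $K$-points and that $GG(\Delta',-)$ is a closed subscheme, so that $GG(\Delta',L)\cap G(\Phi,K)=GG(\Delta',K)$. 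Your route reaches the same conclusion, but the justification ``$GG(\Delta',\ovl{K})^{\mathrm{Gal}(\ovl{K}/K)}=GG(\Delta',K)$ by standard Galois descent'' is not valid as stated when $K$ is imperfect: the fixed field of $\mathrm{Aut}(\ovl{K}/K)$ is the purely inseparable closure of $K$, so Galois invariance of a point only yields rationality over that larger field (already for $\mathbb{G}_a$ this fails to recover $K$). Fortunately the detour is unnecessary: $h'=gn_w^{-1}$ is a product of two elements of $G(\Phi,K)$, hence lies in $G(\Phi,K)$ outright, and then $h'\in GG(\Delta',\ovl{K})\cap G(\Phi,K)=GG(\Delta',K)$ simply because $GG(\Delta',-)$ is a closed subscheme of $G(\Phi,-)$ --- no smoothness or descent is needed. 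With that one correction your argument coincides with the paper's.
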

\begin{proof}
	The inclusion of the left-hand side into the right-hand side is obvious, we prove the inverse inclusion. First, let $K=L$ be an algebraically closed field, and let $g\in \tilde{G}(\Delta',L)$. Note that $T(\Phi,L)$ is a maximal torus of the group $GG(\Delta',L)$. By Lemma \ref{connectedcomponent} the subgroup $GG(\Delta',L)$ is normal in $\tilde{G}(\Delta',L)$; hence $\leftact{g}{T(\Phi,L)}\le GG(\Delta',L)$ is another maximal torus. Since the field is algebraically closed, it follows that these tori are conjugate in $GG(\Delta',L)$, i.e. for some $g_1\in GG(\Delta',L)$ we have 
	$$
	T(\Phi,R)=\leftact{g_1g}{T(\Phi,L)}\le GG(\Delta',L).
	$$
	Hence
	\begin{align*}
		g_1g&\in N_{G(\Phi,L)}(T(\Phi,L))\cap \tilde{G}(\Delta',L)=(T(\Phi,L)\ovl{W}(\Phi))\cap \tilde{G}(\Delta',L)
		\\
		&=T(\Phi,L)(\ovl{W}(\Phi)\cap \tilde{G}(\Delta',L))=T(\Phi,L)\ovl{W}(\Phi,\Delta').
	\end{align*}
	
	Consequently, $g\in GG(\Delta',L)T(\Phi,L)\ovl{W}(\Phi,\Delta')=GG(\Delta',L)\ovl{W}(\Phi,\Delta')$.
	Now let $K$ be an arbitrary field, and let $L$ be its algebraically closure. Then
	\begin{align*}
		\tilde{G}(\Delta',K)&=\tilde{G}(\Delta',L)\cap G(\Phi,K)=(GG(\Delta',L)\ovl{W}(\Phi,\Delta'))\cap G(\Phi,K)
		\\
		&=(GG(\Delta',L)\cap G(\Phi,K))\ovl{W}(\Phi,\Delta')=GG(\Delta',K)\ovl{W}(\Phi,\Delta').
	\end{align*}
	This concludes the proof.
\end{proof}

\begin{lem}
	\label{normalforfields} 
	Let $R=K$ be a field, and let $\sigma$ be a net of ideals. Then the subgroup $E(\sigma)$ is normal in the group $S(\sigma)$. 
\end{lem}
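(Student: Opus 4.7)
The plan is to leverage the fact that over a field the net $\sigma$ is determined by a closed subset of roots, reducing the question to the explicit decomposition provided by Lemma \ref{decompose}.

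First I would note that over $R=K$ each ideal $\sigma_\alpha$ is either $(0)$ or $K$, so setting $\Delta'=\{\alpha\in\Phi\colon \sigma_\alpha=K\}$, the net axioms force $\Delta\subseteq\Delta'$ and $\Delta'$ to be closed under addition inside $\Phi$. Consequently $\sigma=\sigma_{\Delta'}$, and unwinding the definitions gives $E(\sigma)=E(\Delta',K)$ and $L(\sigma)=L(\sigma_{\Delta'})$, whence $S(\sigma)=\tilde{G}(\Delta',K)$.

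Next I would apply Lemma \ref{decompose} to write
$$
S(\sigma)=GG(\Delta',K)\cdot\ovl{W}(\Phi,\Delta').
$$
Since $E(\Delta',K)$ is generated by the root subgroups $X_\alpha$ with $\alpha\in\Delta'$, normality is equivalent to checking that conjugation by each of the two factors sends every such $X_\alpha$ back into $E(\Delta',K)$.

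For $GG(\Delta',K)$, I would use that over a field Zariski sheafification introduces no new sections, so by the semidirect-product description preceding Lemma \ref{connectedcomponent}, combined with the fact that the $h_\alpha(c)$ belong to $T(\Phi,K)$, the group $GG(\Delta',K)$ is generated by $T(\Phi,K)$ together with the $X_\alpha$ for $\alpha\in\Delta'$; the root subgroups conjugate $E(\Delta',K)$ into itself tautologically, while a torus element $t$ acts on $x_\alpha(\xi)$ as $x_\alpha(\chi_\alpha(t)\xi)\in X_\alpha$. For $\ovl{W}(\Phi,\Delta')$, any element lifts some $w\in W(\Phi)$ that stabilises $\Delta'$, and the standard Chevalley relation $wx_\alpha(\xi)w^{-1}=x_{w\alpha}(\pm\xi)$ places the conjugate in $X_{w\alpha}\subseteq E(\Delta',K)$ since $w\alpha\in\Delta'$. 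I do not foresee a serious obstacle: the heavy lifting has already been packaged into Lemmas \ref{decompose} and \ref{connectedcomponent}, and the only care required is to pin down the identification $GG(\Delta',K)=T(\Phi,K)\cdot E(\Delta',K)$ at the level of $K$-points before reading off the generators.
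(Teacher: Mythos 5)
Your proof is correct and follows the same route as the paper: over a field the only nets are $\sigma_{\Delta'}$ for closed sets of roots $\Delta'$, so $S(\sigma)=\tilde{G}(\Delta',K)=GG(\Delta',K)\ovl{W}(\Phi,\Delta')$ by Lemma \ref{decompose}, and $GG(\Delta',K)=E(\Delta',K)T(\Phi,K)$, after which normality is the routine check on torus and Weyl conjugation. The paper states this in two lines; your write-up just makes the generator-by-generator verification explicit.
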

\begin{proof}
	The only nets of ideals in a field are $\sigma=\sigma_{\Delta'}$ for closed sets of roots $\Delta'$. Therefore, the statement follows from Lemma \ref{decompose} and the fact that $GG(\Delta',K)=E(\Delta',K)T(\Phi,K)$.
\end{proof}

\begin{prop}
	\label{fromlocaltofield} 
	Let $R$ be a local ring with nilpotent maximal ideal $\M,$ and let $H$ be an overgroup of $E(\Delta,R)$. If $\rho_{\M}(H)$ is pseudo-standard, then so is $H$.
\end{prop}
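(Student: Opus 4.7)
Let $\sigma = \lev(H)$, set $\Delta' = \{\gamma \in \Phi : \sigma_\gamma = R\}$, and write $K = R/\M$. Because $R$ is local, $\rho_\M(\sigma) = \sigma_{\Delta'}$, and Lemma~\ref{reduction} combined with the hypothesis rewrites the pseudo-standardness of $\rho_\M(H)$ as the inclusion $\rho_\M(H) \le S(\sigma_{\Delta'}) = \tilde G(\Delta', K)$. Fix $h \in H$. The plan is to verify the criterion $\leftact{h^{\pm 1}}{L'(\sigma)} \le L(\sigma)$ from Lemma~\ref{Ssigmadesrciption}. Since $L'(\sigma)$ is generated as a Lie subalgebra by the elements $\xi e_\alpha$ with $\alpha \in \Phi$ and $\xi \in \sigma_\alpha$, and since the adjoint action is a Lie algebra automorphism, it suffices to prove $\leftact{h}{(\xi e_\alpha)} \in L(\sigma)$ for every such pair (and the analogue for $h^{-1}$). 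Because $x_\alpha(\xi) \in E(\sigma) \le H$, the pair $(\leftact{h}{x_\alpha(\xi)}, \leftact{h}{(\xi e_\alpha)})$ is a tandem with first component in $H$, so by Proposition~\ref{tandemsinSsigma} the task further reduces to showing $\leftact{h}{x_\alpha(\xi)} \in S(\sigma)$.

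I will split according to whether $\alpha \in \Delta'$. If $\alpha \notin \Delta'$ then $\sigma_\alpha \le \M$, hence $x_\alpha(\xi) \in G(\Phi, R, \M)$, and its $h$-conjugate lies in $H \cap G(\Phi, R, \M) \le HT(\Phi, R) \cap G(\Phi, R, \M)$; Lemma~\ref{nilpotent}, applied with the nilpotent ideal $\M$, places this intersection inside $T(\Phi, R) E(\sigma) \le S(\sigma)$.

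The main obstacle is the case $\alpha \in \Delta'$, since then $x_\alpha(\xi)$ lies in no proper congruence subgroup. Here the idea is to first arrange things modulo $\M$. By Lemma~\ref{connectedcomponent}, $GG(\Delta', K) = T(\Phi, K)E(\Delta', K)$ is the connected component of identity of $\tilde G(\Delta', K)$ and hence a normal subgroup of it; since $x_\alpha(\bar\xi) \in E(\Delta', K) \le GG(\Delta', K)$, this forces $\leftact{\rho_\M(h)}{x_\alpha(\bar\xi)} \in GG(\Delta', K)$. Because $R$ is local, both $T(\Phi, R) \twoheadrightarrow T(\Phi, K)$ and $E(\Delta', R) \twoheadrightarrow E(\Delta', K)$, so this conjugate lifts to some $te \in T(\Phi, R) E(\Delta', R) \le S(\sigma)$. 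The discrepancy $d = \leftact{h}{x_\alpha(\xi)} \cdot (te)^{-1}$ then belongs to $G(\Phi, R, \M)$; moreover, using $E(\Delta', R) \le H$ and $(te)^{-1} \in E(\Delta', R) T(\Phi, R)$, we see $d \in HT(\Phi, R)$. A second invocation of Lemma~\ref{nilpotent} places $d$ in $S(\sigma)$, and multiplying back by $te \in S(\sigma)$ gives $\leftact{h}{x_\alpha(\xi)} \in S(\sigma)$.

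Running the same argument with $h^{-1}$ in place of $h$, Lemma~\ref{Ssigmadesrciption} will then deliver $h \in S(\sigma)$. The conceptual point is that the residue-field hypothesis controls $\rho_\M(h)$ only up to the finite quotient $\tilde G(\Delta', K)/GG(\Delta', K)$, but this Weyl-like ambiguity never obstructs the argument because the elements we need to conjugate always sit inside the normal subgroup $GG(\Delta', K)$.
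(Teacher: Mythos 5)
Your proof is correct and follows essentially the same route as the paper's: the paper conjugates all of $E(\sigma)$ by $H$ at once, controls the conjugate modulo $\M$ via Lemma \ref{normalforfields} (normality of $E(\ovl{\sigma})$ in $S(\ovl{\sigma})$ over the residue field), absorbs the resulting congruence-subgroup discrepancy with Lemma \ref{nilpotent}, and concludes with Corollary \ref{transporter} --- which is precisely your argument, unwound generator by generator through Lemma \ref{Ssigmadesrciption} and Proposition \ref{tandemsinSsigma}. One citation should be repaired: Lemma \ref{connectedcomponent} is stated only for algebraically closed fields, so to get normality of $GG(\Delta',K)$ in $\tilde{G}(\Delta',K)$ for an arbitrary residue field $K$ you should appeal to Lemma \ref{decompose} (or directly to Lemma \ref{normalforfields}) rather than to the connected-component statement.
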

\begin{proof}
	For convenience, we will write $\ovl{X}=\rho_{\M}(X)$ regardless of the nature of $X$. Let $\lev(H)=\sigma$. Then by Lemma \ref{reduction} we have $\lev(\ovl{H})=\ovl{\sigma}$. Assume that $\ovl{H}$ is pseudo-standard. Then by Lemma \ref{normalforfields} we have $\leftact{\ovl{H}}{E(\ovl{\sigma})}\le E(\ovl{\sigma})$. Since $E(\sigma)$ maps surjectively onto $E(\ovl{\sigma})$, using Lemma \ref{nilpotent}, we obtain
	\begin{align*}
		\leftact{H}{E(\sigma)}&\le (E(\sigma)G(\Phi,R,\M))\cap H=E(\sigma)(G(\Phi,R,\M)\cap H)
		\\
		&\le E(\sigma)T(\Phi,R)\le S(\sigma).
	\end{align*}
	Then, since $H$ is close with respect to taking inverses, by Corollary~\ref{transporter} it follows that $H\le S(\sigma)$. This concludes the proof.
\end{proof}

\begin{cor}
	\label{tofields} Let $R$ be a Noetherian ring\textup, and let $H$ be an overgroup of $E(\Delta,R)$. Assume that $\rho_{\M}(H)$ is pseudo-standard for any $\M\in\Max(R)$. Then $H$ is pseudo-standard.
\end{cor}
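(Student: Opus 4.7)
The plan is a straightforward two-step reduction, chaining the two reduction results already established in the excerpt, namely Corollary \ref{tolocal} (reduction to local rings with nilpotent maximal ideal) and Proposition \ref{fromlocaltofield} (reduction from such a local ring to its residue field).

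First I would fix an arbitrary $\M\in\Max(R)$ and $k\in\N$, and consider the quotient ring $R/\M^k$. This ring is local, and its maximal ideal $\rho_{\M^k}(\M)$ is nilpotent (its $k$-th power is zero). Hence Proposition \ref{fromlocaltofield} applies to it: provided the further reduction $\rho_{\M}\circ\rho_{\M^k}$ of $H$ to the residue field $R/\M$ is pseudo-standard, the overgroup $\rho_{\M^k}(H)\le G(\Phi,R/\M^k)$ is itself pseudo-standard. But the composed reduction $\rho_\M\circ\rho_{\M^k}$ is just $\rho_\M$, so the hypothesis that $\rho_\M(H)$ is pseudo-standard for every maximal ideal gives us exactly what is needed. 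Therefore $\rho_{\M^k}(H)$ is pseudo-standard for every $\M\in\Max(R)$ and every $k\in\N$.

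Second, I invoke Corollary \ref{tolocal}: since the hypothesis of that corollary, namely pseudo-standardness of $\rho_{\M^k}(H)$ for all $\M$ and $k$, has just been verified, we conclude that $H$ itself is pseudo-standard, as required.

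There is no real obstacle here; the only very small point to make precise is that the composition of reduction homomorphisms behaves functorially, so that $\rho_\M$ viewed as a homomorphism $G(\Phi,R)\to G(\Phi,R/\M)$ factors as $\rho_\M\circ\rho_{\M^k}$, and therefore its image on $H$ coincides with the image of $\rho_{\M^k}(H)$ under the further reduction modulo $\rho_{\M^k}(\M)$. Everything else is just quoting the two previous reduction results in the correct order.
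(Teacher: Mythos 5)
Your proposal is correct and is exactly the paper's argument: the paper's proof is the one-line ``Corollary \ref{tolocal} $+$ Proposition \ref{fromlocaltofield}'', and you have simply spelled out the chaining of the two reductions, including the (routine but worth noting) functoriality point that $\rho_\M$ factors through $\rho_{\M^k}$.
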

\begin{proof}
	Corollary \ref{tolocal} $+$ Proposition \ref{fromlocaltofield}.
\end{proof}

\begin{cor}
	\label{tandemreduction} Let $R$ be a local ring with nilpotent maximal ideal $\M$. Let $H$ be an overgroup of $E(\Delta,R)$ of level $\sigma.$ If $\rho_{\M}(l)\in L(\rho_\M(\sigma))$ for any tandem $(g,l)$ such that $g\in H,$ then $H$ is pseudo-standard.
\end{cor}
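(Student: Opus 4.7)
The plan is to reduce to Proposition~\ref{fromlocaltofield} by showing that $\rho_\M(H)$ is pseudo-standard. By Lemma~\ref{reduction}, $\lev(\rho_\M(H)) = \rho_\M(\sigma)$, so this amounts to checking $\rho_\M(H) \le S(\rho_\M(\sigma))$. Using Lemma~\ref{Ssigmadesrciption} over the residue field $R/\M$, I only need to prove that $\leftact{\tilde{h}^{\pm 1}}{L'(\rho_\M(\sigma))} \le L(\rho_\M(\sigma))$ for every $\tilde{h} \in \rho_\M(H)$. Since conjugation commutes with the Lie bracket and $L(\rho_\M(\sigma))$ is a Lie subalgebra (Lemma~\ref{lie}), it suffices to verify this inclusion on the Lie-algebra generators $\tilde{\xi}\,e_\alpha$ of $L'(\rho_\M(\sigma))$, where $\alpha \in \Phi$ and $\tilde{\xi} \in \rho_\M(\sigma_\alpha)$.

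The key observation is that each such generator can be realised as the reduction of the second component of a tandem whose first component lies in $H$. Given $\tilde{h} \in \rho_\M(H)$, choose $h \in H$ lifting it, and choose $\xi \in \sigma_\alpha$ lifting $\tilde{\xi}$. Because $\sigma = \lev(H)$ gives $E(\sigma) \le H$, we have $x_\alpha(\xi) \in H$, hence $\leftact{h}{x_\alpha(\xi)} \in H$, which makes $(\leftact{h}{x_\alpha(\xi)},\leftact{h}{(\xi e_\alpha)})$ a tandem with first component in $H$. The hypothesis then yields
\[
\leftact{\tilde{h}}{(\tilde{\xi}\, e_\alpha)} = \rho_\M(\leftact{h}{(\xi e_\alpha)}) \in L(\rho_\M(\sigma)).
\]
The $\tilde{h}^{-1}$ case is identical after replacing $h$ by $h^{-1} \in H$. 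This establishes $\rho_\M(H) \le S(\rho_\M(\sigma))$, so $\rho_\M(H)$ is pseudo-standard, and Proposition~\ref{fromlocaltofield} now concludes.

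I do not anticipate any serious obstacle. The two delicate points are simply that stabilisation of $L'(\rho_\M(\sigma))$ by a group element can be tested on Lie-algebra generators (because the adjoint action is a Lie-algebra automorphism), and that the needed tandem in $H$ can be constructed thanks to the containment $E(\sigma) \le H$ built into the definition of the level.
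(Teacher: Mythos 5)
Your proof is correct. It reaches the same destination --- Proposition \ref{fromlocaltofield} applied after showing that the reduction modulo $\M$ is pseudo-standard --- but by a genuinely more direct route than the paper. The paper cannot conclude $\rho_{\M}(H)\le S(\rho_{\M}(\sigma))$ immediately from the hypothesis, because the hypothesis (via Proposition \ref{tandemsinSsigma}) only controls those elements of $H$ that are first components of tandems; it therefore introduces the auxiliary subgroup $H'\le H$ generated by such elements, shows $\rho_{\M}(H')\le S(\rho_{\M}(\sigma))$, deduces via Proposition \ref{fromlocaltofield} that $H'\le S(\sigma)$, and only then recovers the statement for all of $H$ by a final application of Corollary \ref{enoughfortandems} together with Proposition \ref{tandemsinSsigma}. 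You instead test membership of an \emph{arbitrary} $\tilde h\in\rho_{\M}(H)$ in $S(\rho_{\M}(\sigma))$ directly through Lemma \ref{Ssigmadesrciption}, reducing to the generators $\tilde\xi e_\alpha$ of $L'(\rho_{\M}(\sigma))$ and realising $\leftact{\tilde h^{\pm1}}{(\tilde\xi e_\alpha)}$ as the reduction of the second component of a tandem whose first component lies in $H$ (which uses $E(\sigma)\le H$, exactly as in the proof of Corollary \ref{enoughfortandems}). In effect you inline the argument of Corollary \ref{enoughfortandems} one level down, which lets you dispense with $H'$ and with the final back-and-forth; the paper's version is slightly more modular in that it reuses the stated corollary rather than reproving its mechanism. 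Both arguments rest on the same two observations you flag: the adjoint action is a Lie algebra automorphism, so stabilisation of $L'$ can be checked on generators, and the needed tandems exist because $\lev(H)=\sigma$ forces $E(\sigma)\le H$.
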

\begin{proof}
	Let $H'\le H$ be a subgroup generated by elements $g\in H$ that are the first components of tandems. It is Clear that $\lev(H')=\sigma$. From assumption and Proposition \ref{tandemsinSsigma} it follows that for any tandem $(g,l)$ such that $g\in H$ we have $\rho_{\M}(g)\in S(\rho_{\M}(\sigma))$. Then by definition of $H'$ we obtain $\rho_{\M}(H')\le S(\rho_{\M}(\sigma))$. Moreover, by Lemma \ref{reduction} we have $\lev(\rho_{\M}(H'))=\rho_{\M}(\sigma)$, i.e. the subgroup $\rho_{\M}(H')$ is pseudo-standard. Then by Proposition \ref{fromlocaltofield} the subgroup $H'$ is pseudo-standard. Hence by Corollary \ref{enoughfortandems}, using Proposition \ref{tandemsinSsigma}, we see that $H$ is pseudo-standard. 
\end{proof}

\section{Finishing the proof of Theorem \ref{sandwich}}

For Noetherian rings, the theorem follows from Corollary \ref{tofields} and Proposition~\ref{field}. For arbitrary rings it can be obtain by passing to the inductive limit by finitely generated subrings. It should be noted here that if the ring $R$ has no residue field of two elements, then it is an industive limit of its finitely generated subrings with the same property. Indeed, in this case the elements that can be expressed as $t^2+t$ generate the unit ideal in $R$ (otherwise, the ideal generated by them is contained in some maximal ideal, and in the corresponding residue field all elements are roots of the equation $t^2+t=0$). Hence there exist $a_i, t_i\in R$ such that
$
\sum_{i=1}^n a_i(t_i^2+t_i)=1.
$
Then all finitely generated subrings that contain all the $a_i$ and $t_i$ have no residue field of two elements, and~$R$ is the iductive limit of these subrings. 

\section{Examples of sufficiently large subsystems}
\label{examples}

\begin{lem}
	\label{nA1} 
	Let $\Phi$ be an irreducible simply lased root system, and let $\rk\Phi=n$. Let $\Delta\le\Phi$ be a subsystem that contains a subsystem of type $nA_1$. Then the pair $(\Phi,\Delta)$ satisfy the condition $(*)$.
\end{lem}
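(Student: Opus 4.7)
The plan is to use the $nA_1$ subsystem inside $\Delta$ as an orthogonal root basis of the ambient space, and then read off both the candidate pair $(\alpha_1,\alpha_2)$ and the witnesses to admissibility directly from coordinates in that basis. Let $\beta_1,\ldots,\beta_n\in\Delta$ be pairwise orthogonal roots forming an $nA_1$; since $\rk\Phi=n$, they are (after rescaling) an orthogonal basis of the ambient Euclidean space. Normalising $(\beta_i,\beta_i)=2$, any root $\gamma\in\Phi$ expands as $\gamma=\sum c_i\beta_i$ with $2c_i=(\gamma,\beta_i)\in\{-2,-1,0,1,2\}$ by the simply laced hypothesis and $\sum c_i^2=1$ since $(\gamma,\gamma)=2$. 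The only solutions are either (i) a single $c_i=\pm 1$ with the rest zero, giving $\gamma=\pm\beta_i\in\Delta$; or (ii) exactly four of the $c_i$ equal to $\pm\tfrac12$ and the rest zero, giving
$$
\gamma=\tfrac12(\epsilon_1\beta_{i_1}+\epsilon_2\beta_{i_2}+\epsilon_3\beta_{i_3}+\epsilon_4\beta_{i_4})
$$
with distinct indices $i_j$ and signs $\epsilon_j\in\{\pm 1\}$.

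For $\gamma\in\Phi\sm\Delta$, only case (ii) is possible, and I would set $\alpha_1=-\epsilon_1\beta_{i_1}$ and $\alpha_2=-\epsilon_2\beta_{i_2}$. Both lie in $\Delta$, they are orthogonal, and one checks directly that $(\alpha_1,\gamma)=(\alpha_2,\gamma)=-1$. It remains to verify admissibility of this pair. Applying the same classification, any $\delta\in\Sigma_{\alpha_1,\alpha_2}\sm\{\alpha_1,\alpha_2\}$ must satisfy $(\beta_{i_j},\delta)=-\epsilon_j$ for $j=1,2$, which rules out case (i) for $\delta$, so
$$
\delta=\tfrac12(-\epsilon_1\beta_{i_1}-\epsilon_2\beta_{i_2}+\mu_1\beta_{k_1}+\mu_2\beta_{k_2})
$$
for some distinct $k_1,k_2\notin\{i_1,i_2\}$ and signs $\mu_1,\mu_2\in\{\pm 1\}$. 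In particular every element of $\Sigma_{\alpha_1,\alpha_2}\sm\Delta$ has this shape.

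To finish, given two distinct such roots $\delta,\delta'$, I would choose $\beta=\beta_k\in\Delta$ for a suitable $k\notin\{i_1,i_2\}$: if the two-element index sets $\{k_1,k_2\}$ and $\{k_1',k_2'\}$ differ, take $k$ in their symmetric difference; if the index sets agree, the signs must differ and I take $k$ to be an index where $\mu$ and $\mu'$ disagree. Either way $(\beta_k,\delta)\ne(\beta_k,\delta')$, while $\vpi_{\alpha_1,\alpha_2}(\beta_k)=0$ by orthogonality of $\beta_k$ to $\beta_{i_1}$ and $\beta_{i_2}$, which witnesses admissibility. The argument is essentially a classification followed by bookkeeping; the potentially subtle-looking admissibility check reduces to this index-distinguishing observation, and I do not foresee a genuine obstacle, since once the expansion in the $\beta_i$-basis is in hand every ingredient required by $(*)$ is a direct readout from coordinates.
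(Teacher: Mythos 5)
Your proof is correct and follows essentially the same route as the paper: both arguments draw the admissible pair $\alpha_1,\alpha_2$ and the distinguishing root $\beta$ from the $nA_1$ subsystem, and both ultimately rest on the fact that the $n$ pairwise orthogonal roots span the ambient space. Your explicit classification of the roots of $\Phi$ into $\pm\beta_i$ and half-sums of four $\beta_i$'s is just a concrete coordinate version of the paper's more abstract observation that $\gamma_1-\gamma_2$ cannot be orthogonal to all of $nA_1$ unless $\gamma_1=\gamma_2$.
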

\begin{proof}
	Firstly, note that any pair of orthogonal roots from the above subsystem $nA_1$ is admissible. Indeed, let $\alpha_1,\alpha_2$ be such a pair, and let $\gamma_1,\gamma_2\in\Sigma_{\alpha_1,\alpha_2}\sm\Delta$ be distinct roots. Then $\beta$ can be taken to be one of the roots from the subsystem $nA_1$ orthogonal to  $\alpha_1$ and $\alpha_2$. Indeed, if $\gamma_1-\gamma_2$ is orthogonal to all such roots, then since $\gamma_1,\gamma_2\in\Sigma_{\alpha_1,\alpha_2}$ (hence $\gamma_1-\gamma_2\perp \alpha_1,\alpha_2$), it follows that $\gamma_1-\gamma_2$ is orthogonal to the whole subsystem $nA_1$, i.e. $\gamma_1=\gamma_2$.
	
	Now let $\gamma\in\Phi\sm\Delta$. Since $\gamma$ can not be orthogonal to the whole subsystem $nA_1$, it follows that there exists $\alpha_1\in nA_1$ such that $(\gamma,\alpha_1)=-1$. Assume that we can not find second such a root $\alpha_2\in nA_1$ orthogonal to $\alpha_1$. This means that $\gamma$ is orthogonal to the orthogonal complement to the root $\alpha_1$. Hence $\gamma=\pm\alpha_1\in\Delta$, which contradicts the assumption.
\end{proof}

\vspace{-1mm}
The next Proposition shows that Theorem \ref{sandwich} solves most of the problems formulated in the paper \cite{VSch}.

We denote $i$-th simple root by $\eps_i$, and the maximal root by $\delta$. The enumeration of simple roots is the same as in \cite{Bourbaki4-6}.
\vspace{-1mm}
\begin{prop}
	For the following embeddings of root systems, condition $(*)$ is fulfilled {\rm (}here in parenthesis we point out the simple roots of the subsystem $\Delta$ if we need it in the proof{\rm )}.
	\renewcommand{\theenumi}{\alph{enumi}}
	\begin{enumerate}
		\item $A_7\le E_7\quad (-\delta,\eps_1,\eps_3,\ldots,\eps_7)$.
		
		\item $A_5+A_1\le E_6\quad (-\delta,\eps_1,\eps_3,\ldots,\eps_6)$.
		
		\item $A_8\le E_8\quad (\eps_1,\eps_3,\ldots,\eps_8,-\delta)$.
		
		\item $D_5\le E_6\quad (\eps_1,\ldots,\eps_5)$.
		
		\item $E_6\le E_7\quad (\eps_1,\ldots,\eps_6)$.
		
		\item $A_5+A_2\le E_7\quad (-\delta,\eps_1,\eps_2,\eps_4,\ldots,\eps_7)$.
		
		\item $2A_3+A_1\le E_7\quad (-\delta,\eps_1,\eps_2,\eps_3,\eps_5,\eps_6,\eps_7)$.
		
		\item $A_1+A_7\le E_8\quad (-\delta,\eps_1,\eps_2,\eps_4,\ldots,\eps_8)$.
		
		\item $D_5+A_3\le E_8\quad (\eps_1,\ldots,\eps_5,\eps_7,\eps_8,-\delta)$.
		
		\item $2A_4\le E_8\quad (\eps_1,\ldots,\eps_4,\eps_6,\eps_7,\eps_8,-\delta)$.
		
		\item $4A_2\le E_8$.
		
		\item $3A_2\le E_6$.
		
		\item $E_6+A_2\le E_8$.
		
		\item $D_6+A_1\le E_7$.
		
		\item $D_8\le E_8$.
		
		\item $E_7+A_1\le E_8$.
		
		\item $D_4+3A_1\le E_7$.
		
		\item $D_6+2A_1\le E_8$.
		
		\item $2D_4\le E_8$.
		
		\item $7A_1\le E_7$.
		
		\item $2mA_1\le D_{2m}$.
		
		\item $8A_1\le E_8$.
	\end{enumerate}
	\renewcommand{\theenumi}{\arabic{enumi}}
\end{prop}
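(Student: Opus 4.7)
The plan is to verify condition $(*)$ for each of the 22 listed embeddings, splitting the list into two groups according to whether Lemma~\ref{nA1} applies. Items (n)--(v) are settled immediately: in each of them $\rk\Delta=\rk\Phi$ and $\Delta$ contains a subsystem of type $nA_1$ with $n=\rk\Phi$. Indeed $D_{2k}$ supports $2kA_1$ through the orthogonal pairs $e_i\pm e_{i+1}$, item (t) provides $7A_1\le E_7$ directly, and the other embeddings obtain $nA_1$ by adjoining the visible $A_1$-summands (or a second $D_4$-block in item (s)) to a $D_{2k}$ or $E_7$ already contained in $\Delta$. Lemma~\ref{nA1} then yields $(*)$.

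The remaining items (a)--(m) require an explicit combinatorial check, and for each $(\Phi,\Delta)$ I would follow the same template. By Lemma~\ref{Weylorbits} one first reduces to a single representative $\gamma$ per $W(\Delta)$-orbit in $\Phi\sm\Delta$; in all listed cases the number of such orbits is at most two. From the explicit simple roots of $\Delta$ recorded in the statement one reads off an orthogonal pair $\alpha_1,\alpha_2\in\Delta$ with $(\alpha_1,\gamma)=(\alpha_2,\gamma)=-1$. Finally, for any distinct $\gamma_1,\gamma_2\in\Sigma_{\alpha_1,\alpha_2}\sm\Delta$ one must exhibit a separating root $\beta\in\Delta$ with $\vpi_{\alpha_1,\alpha_2}(\beta)=0$ and $(\beta,\gamma_1)\ne(\beta,\gamma_2)$.

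The admissibility check is the main obstacle. The key observation is that both $\gamma_1-\gamma_2$ and every level-zero root lie in the hyperplane $(\alpha_1+\alpha_2)^\perp$, so it suffices to choose $(\alpha_1,\alpha_2)$ so that the level-zero roots $\{\beta\in\Delta:\vpi_{\alpha_1,\alpha_2}(\beta)=0\}$ rationally span this hyperplane. When $\rk\Delta<\rk\Phi$ (items (b), (d), (e)) the level-zero roots span only the intersection with the real span of $\Delta$, so one additionally uses the hypothesis $\Delta^\perp\cap\Phi=\emp$ together with direct inspection of root coordinates to exclude $\gamma_1-\gamma_2\in\Delta^\perp$. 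Choosing $\alpha_1,\alpha_2$ as two appropriately separated simple roots of $\Delta$ from the listed data, the rank arithmetic of the remaining components of $\Delta$ supplies the required span in items (a), (c), (f)--(j).

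The most delicate items are (k) $4A_2\le E_8$, (l) $3A_2\le E_6$ and (m) $E_6+A_2\le E_8$, where $\Delta$ has no $A_1$-summand: here $\alpha_1,\alpha_2$ must be chosen inside two distinct $A_2$-components, and the level-zero roots must still span the orthogonal hyperplane, which they do because of the rigid embedding $kA_2\le \Phi$ supplied by the extended Dynkin diagram of the ambient exceptional system. Throughout, the work per item is modest; the main challenge is the organizational one of correctly enumerating $W(\Delta)$-orbits on $\Phi\sm\Delta$ and exhibiting a uniform pair $(\alpha_1,\alpha_2)$ for each orbit.
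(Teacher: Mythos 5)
Your treatment of items (n)--(v) is correct and coincides with the paper's: each of these subsystems contains a subsystem of type $(\rk\Phi)A_1$, so Lemma~\ref{nA1} applies. The problem is the other half of the list, where your proposal is essentially a plan rather than a proof, and the one concrete criterion you offer for admissibility is false in exactly the cases where the work lies. You claim it suffices to choose $\alpha_1,\alpha_2$ so that the level-zero roots of $\Delta$ rationally span the hyperplane $(\alpha_1+\alpha_2)^\perp$. Already in case (a), $A_7\le E_7$ with the paper's pair $\alpha_1=(0,0,0,-1,1,0,0,0)$, $\alpha_2=(0,0,-1,0,0,1,0,0)$, the level-zero roots of $A_7$ are an $A_3$ supported on coordinates $\{1,2,7,8\}$ together with $\pm(e_3-e_4)$ and $\pm(e_5-e_6)$; they span a $5$-dimensional space inside a $6$-dimensional hyperplane. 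The verification nevertheless succeeds, but only because one checks explicitly that every element of $\Sigma_{\alpha_1,\alpha_2}\sm\Delta$ has fixed entries $(-\tfrac12,-\tfrac12,\tfrac12,\tfrac12)$ in positions $3$--$6$, so that $\gamma_1-\gamma_2$ is forced into the span of that $A_3$. That coordinate inspection of $\Sigma_{\alpha_1,\alpha_2}\sm\Delta$ is the actual content of cases (a)--(j), and your proposal does not supply it.

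The failure is worse in (k) and (l). For $4A_2\le E_8$ with $\alpha_1,\alpha_2$ in distinct $A_2$-components, no root of an $A_2$ is orthogonal to a given root of the same component, so the level-zero roots of $\Delta$ are just the two untouched $A_2$'s: they span a $4$-dimensional space inside a $7$-dimensional hyperplane, and no choice of $\alpha_1,\alpha_2$ does better. Your appeal to "the rigid embedding supplied by the extended Dynkin diagram" does not repair this. What the paper does instead is a metric argument: if $\gamma_1-\gamma_2$ were orthogonal to the two remaining components, then its orthogonal projection onto the plane of each $A_2$-component has squared length $0$ or $\tfrac23$, whence $|\gamma_1-\gamma_2|^2\le\tfrac43<2$, contradicting the fact that a difference of distinct roots in a simply laced system has squared length at least $2$. (Case (m) then follows formally from (k), since $3A_2\le E_6$ gives $4A_2\le E_6+A_2$ and condition $(*)$ is inherited by larger subsystems.) Without either the explicit coordinate computations for (a)--(j) or the length estimate for (k)--(l), the admissibility of the chosen pairs is not established, so the proof is incomplete where it matters.
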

\begin{proof}
	(a) In this case the group $W(\Delta)$ acts on the set $\Phi\sm\Delta$ transitively. Hence we may assume that $\gamma=\eps_2$. Then we can set $\alpha_1=\eps_4$ and $\alpha_2=\eps_3+\eps_4+\eps_5$. We verify that this pair is admissible.
	
	We represent $E_7$ as the set of eight-dimensional vectors that can be obtain by permutation of coordinates in the vectors
	$$
	(1,-1,0,0,0,0,0,0,)\quad\text{and}\quad \left({1\over 2},{1\over 2},{1\over 2},{1\over 2},-{1\over 2},-{1\over 2},-{1\over 2},-{1\over 2}\right).
	$$
	The first type of vectors forms our subsystem $A_7$. In this representation we have
	\begin{align*}
		\alpha_1&=(0,0,0,-1,1,0,0,0),\\
		\alpha_2&=(0,0,-1,0,0,1,0,0).
	\end{align*}
	Then the elements of the set $\Sigma_{\alpha_1,\alpha_2}\sm\Delta$ have the form
	$$
	\left(\,\cdot\,,\,\cdot\,,-{1\over 2},-{1\over 2},{1\over 2},{1\over 2},\,\cdot\,,\,\cdot\,\right).
	$$
	If two such vectors $\gamma_1,\gamma_2$ are distinct, then there exist $i,j\in\{1,2,7,8\}$ such that $\gamma_1$ have $\tfrac{1}{2}$ and $-\tfrac{1}{2}$ at these positions respectively, and vice versa for $\gamma_2$. Then $\beta$ can be taken to be the vector that has $1$ in position $i$ and $-1$ in position $j$.
	
	(b) Similarly.
	
	(c) Similarly. However, here the set $\Phi\sm\Delta$ has two $W(\Delta)$-orbits, but one of them consist of roots that are opposite to the roots of another one; hence it suffices to consider one of the orbits. The verification of the fact that the pair is admissible is similar, but a bit longer.
	
	(d) Similarly.
	
	(e) Similarly.
	
	(f) It is easy to see that for any $\gamma\in\Phi\sm\Delta$ there exist $\alpha_1\in A_2$ and $\alpha_2\in A_5$ such that $(\gamma,\alpha_1)=(\gamma,\alpha_2)=-1$. We check that any such a pair is admissible. Since the group $W(\Delta)$ acts transitively on the set of such pairs, we may assume that $\alpha_1=\eps_1$ and $\alpha_2=\eps_2$. We embed $E_7$ into $\R^9$ so that the first three coordinates correspond to the standard realisation of~$A_2$, and the remaining six coordinates correspond to the standard realisation of~$A_5$. Then we have
	\begin{align*}
		\alpha_1&=(0,-1,1,0,0,0,0,0,0),\\
		\alpha_1&=(0,0,0,-1,1,0,0,0,0).
	\end{align*}
	Let $\gamma_1,\gamma_2\in\Sigma_{\alpha_1,\alpha_2}\sm\Delta$ be a distinct roots. The set $\Phi\sm\Delta$ has two $W(\Delta)$-orbits. Depending on the orbit, the first three coordinates of the root from $\Phi\sm\Delta$ are either permutation of $(\tfrac{1}{3},\tfrac{1}{3},-\tfrac{2}{3})$, or permutation of $(-\tfrac{1}{3},-\tfrac{1}{3},\tfrac{2}{3})$, and the remaining six coordinates are either permutation of $(\tfrac{2}{3},\tfrac{2}{3},-\tfrac{1}{3},-\tfrac{1}{3},-\tfrac{1}{3},-\tfrac{1}{3})$, or permutation of $(-\tfrac{2}{3},-\tfrac{2}{3},\tfrac{1}{3},\tfrac{1}{3},\tfrac{1}{3},\tfrac{1}{3})$. Assume that we can not take $\beta$ to be the root from subsystem~$A_3$ supported in the last four coordinates, i.e. $\gamma_1-\gamma_2$ is orthogonal to all such roots. It is easy to see that this is posible only if the last four coordinates of $\gamma_1$ and $\gamma_2$ coincide (here we use that $(\gamma_1,\alpha_2)=-1$). In particular, this means that $\gamma_1$ and $\gamma_2$ are in the same $W(\Delta)$-orbit. For a given orbit, the condition $\gamma_1,\gamma_2\in\Sigma_{\alpha_1,\alpha_2}$ defines uniquely the coordinates from the second to the fourth. Then the first coordinate is uniquely defined as well, i.e. $\gamma_1=\gamma_2$. 
	
	(g) Similarly, it suffices to show that the pair $\alpha_1\!=\!\eps_3$, $\alpha_2\!=\!\eps_5$ is admissible. This can easily be done case by case, using the corresponding embedding into~$\R^{10}$.
	
	(h) Similarly, considering the corresponding embedding into~$\R^{10}$, it can be shown that any orthogonal pair of roots from $A_7$ is admissible, and this gives sufficiently many admissible pairs.
	
	(i) Similarly, considering the corresponding embedding into~$\R^9$, it can be shown that any pair $\alpha_1\in D_5$, $\alpha_2\in A_3$ is admissible, and this gives sufficiently many admissible pairs. 
	
	(j) Similarly, considering the corresponding embedding into~$\R^{10}$, it can be shown that if the roots $\alpha_1,\alpha_2\in \Delta$ are in distinct components, then such a pair is admissible, and this gives sufficiently many admissible pairs.  
	
	(k) It suffices to show that if $\alpha_1$ and $\alpha_2$ are from distinct components, then such a pair is admissible. Let $\gamma_1,\gamma_2\in\Sigma_{\alpha_1,\alpha_2}\sm\Delta$ be distinct roots. Assume that $\gamma_1-\gamma_2$ is orthogonal to all the remaining components. Considering all possible positions of $\gamma_i$ with respect ot the remaining components, we see that the projection of the vector $\gamma_1-\gamma_2$ to the plane of a component has the square of the length equal either to $0$, or to $\tfrac{2}{3}$. Hence the vector $\gamma_1-\gamma_2$ has the square of the length not greater than~$\tfrac{4}{3}$. However, the difference of two distinct roots has the square of the length always at least 2. This is a contradiction.
	
	(l) Similarly.
	
	(m) Follows formally from the case $4A_2\le E_8$ because $3A_2\le E_6$.
	\renewcommand{\theenumi}{\arabic{enumi}}
	
	All the remaining cases follow from Lemma \ref{nA1}.
\end{proof}

\section{Overgroups of $4A_1$ in $D_4$}

As we already noted, overgroups of subsystem subgroups in orthogonal and symplectic groups were described by Vavilov and Schegolev (see references in \cite{VavMIAN} and \cite{SchegDiss}). However, none of those papers cover the case $2mA_1\le D_{2m}$ because it requires a $2A_1$-proof.

From Lemma \ref{nA1}, it follows that condition $(*)$ is fulfilled in this case. Therefore, Theorem \ref{sandwich} solves this problem provided that the ring has no residue field of two elements.    

Now let $\Phi=D_4$ and $\Delta=4A_1$. Already in this case, a pseudo-standard description is not available over $\F_2$. Nevertheless, a weaker form of sandwich classification can be proven for an arbitrary ring. 

Here the set $\Phi\sm\Delta$ has only one $W(\Delta)$-orbit. Hence by Lemma \ref{Weylorbits} we can identify a net of ideals $\sigma$ with the ideal that correspond to this orbit, which makes things easier.

\begin{prop}
	Let $R$ be an arbitrary commutative ring\textup, and let $\sigma\unlhd R$ be an ideal. Then there exists an overgroup 
	$$
	E(4A_1,R)\le\hat{G}(D_4,4A_1,R,\sigma)=\hat{G}(\sigma)\le G(D_4,R)
	$$
	that is maximal among all the overgroups of level $\sigma$.
\end{prop}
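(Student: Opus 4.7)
The plan is to apply Zorn's Lemma to the partially ordered set $\mathcal{F}$ of all overgroups $H$ with $E(4A_1,R)\le H\le G(D_4,R)$ and $\lev(H)=\sigma$, ordered by inclusion.

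I would first verify that $E(\sigma)\in\mathcal{F}$, which shows $\mathcal{F}\ne\emp$. The containment $E(4A_1,R)\le E(\sigma)$ is immediate from the definition of a net of ideals, since $\sigma_\alpha=R$ for every $\alpha\in 4A_1$. The chosen generating set of $E(\sigma)$ gives the obvious inclusion $\sigma\le\lev(E(\sigma))$, while Corollary~\ref{levelSsigma} provides both $E(\sigma)\le S(\sigma)$ and $\lev(S(\sigma))=\sigma$. Combining these, $\sigma\le\lev(E(\sigma))\le\lev(S(\sigma))=\sigma$, so $\lev(E(\sigma))=\sigma$.

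Next, I would verify the chain hypothesis of Zorn's Lemma. Given a chain $\{H_i\}_{i\in I}\sub\mathcal{F}$, the directed union $H=\bigcup_i H_i$ is a subgroup containing $E(4A_1,R)$, and for every $\alpha\in\Phi$ the equality
$$
\lev(H)_\alpha=\{\xi\in R\colon x_\alpha(\xi)\in H\}=\bigcup_{i\in I}\lev(H_i)_\alpha=\sigma_\alpha
$$
holds because membership of a single element $x_\alpha(\xi)$ in the directed union is detected at some stage of the chain. Hence $H\in\mathcal{F}$, and Zorn's Lemma yields a maximal element which we denote by $\hat G(\sigma)=\hat G(D_4,4A_1,R,\sigma)$.

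There is no real obstacle at the level at which the statement is phrased, since only the existence of \emph{some} maximal element is asserted. The substantive difficulty, which this proposition deliberately sidesteps, would be the uniqueness question: to show that the subgroup generated by any two overgroups of level $\sigma$ is itself of level $\sigma$, so that $\hat G(\sigma)$ is in fact a greatest element and the notation is canonical. Over a ring with a residue field of two elements, the stabiliser $S(\sigma)$ no longer contains every overgroup of level $\sigma$ (pseudo-standard description fails), so the uniqueness statement cannot be read off from the earlier sections and presumably requires an explicit analysis of the anomalous $\F_2$-elements, which I would expect to occupy the remainder of the paper.
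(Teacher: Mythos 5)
Your Zorn's-lemma argument is internally correct but proves the wrong statement. As the paper's proof and the surrounding discussion make clear, ``maximal among all the overgroups of level $\sigma$'' here means a \emph{greatest} element: a single subgroup $\hat G(\sigma)$ containing \emph{every} overgroup $H$ with $\lev(H)=\sigma$, which is exactly what is needed for the promised weaker form of sandwich classification $E(\sigma)\le H\le\hat G(\sigma)$. A poset-maximal element exists trivially by Zorn (the set of overgroups of level $\sigma$ is nonempty and closed under unions of chains, as you verify), but this carries essentially no information: you have not shown that the set is directed, so your $\hat G(\sigma)$ need not contain a second given overgroup of the same level, and the notation $\hat G(D_4,4A_1,R,\sigma)$ would not denote a well-defined object. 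The issue you flag in your final paragraph as something the proposition ``deliberately sidesteps'' is in fact its entire content.

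The paper's proof is devoted precisely to the greatest-element property. It first reduces to finitely generated (hence Noetherian) rings, then to local rings with nilpotent maximal ideal $\M$ by setting $\hat G(\sigma)$ equal to the intersection of the preimages $\rho_{\M^k}^{-1}\bigl(\hat G(D_4,4A_1,R/\M^k,\rho_{\M^k}(\sigma))\bigr)$ as in Proposition \ref{powersofmaximal}, using Lemma \ref{reduction} to see that every overgroup of level $\sigma$ lands in each preimage. For such local rings: if $R/\M\ne\F_2$, Theorem \ref{sandwich} lets one take $\hat G(\sigma)=S(\sigma)$; if $\sigma\ne\M$, one again shows that every overgroup of level $\sigma$ lies in $S(\sigma)$, by running the special-bitandem computation from the proof of Proposition \ref{field} and applying Nakayama's lemma to the values $at+bt^2$. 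The only genuinely new case is $R=\F_2$, $\sigma=(0)$, where one shows, using the ATLAS list of maximal subgroups of $G(D_4,\F_2)$, that $N_{G(D_4,\F_2)}\bigl([G(4A_1,\F_2),G(4A_1,\F_2)]\bigr)$ is the unique maximal subgroup containing $G(4A_1,\F_2)$; this is where the failure of the pseudo-standard description over $\F_2$ is actually confronted. None of these steps is optional, and none of them appears in your proposal.
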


\begin{proof}
	(1) Note that it suffices to prove this proposition for Noetherian rings. Indeed, if for Noetherian rings the proposition is proved, then for an arbitrary ring $R$ we can set:
	$$
	\hat{G}(\Phi,\Delta,R,\sigma)=\bigcup_{S_0\in\FG(R)}\;\bigcap_{\substack{S\in\FG(R) \\ S_0\le S}} \hat{G}(\Phi,\Delta,S,\sigma\cap S)\le G(\Phi,R),
	$$
	where $\FG(S)$ is the set of finitely generated subrings in $R$.
	
	(2) Note that it suffices to prove this proposition for local rings with nilpotent maximal ideal. Indeed, if for such rings the proposition is proved, then for an arbitrary Noetherian ring we can set:
	$$
	\hat{G}(\Phi,\Delta,R,\sigma)=\bigcap_{\M\in\Max R,\, k\in \N} \rho^{-1}_{\M^k}\big(\hat{G}(\Phi,\Delta,R/\M^k,\rho_{\M^k}(\sigma))\big).
	$$
	Here the level of the right-hand side is equal to
	$$
	\bigcap_{\M\in\Max R,\, k\in \N} \rho^{-1}_{\M^k}(\rho_{\M^k}(\sigma))=\sigma.
	$$
	We discuss it in the proof of Proposition \ref{powersofmaximal}. Then by Lemma~\ref{reduction} this subgroup is maximal among all the overgroups of level $\sigma$.
	
	Starting with this moment, we assume that $R$ is a local ring with nilpotent maximal ideal $\M$. We will write $\ovl{X}=\rho_{\M}(X)$ regardless of the nature of $X$.
	
	(3) If \!$R/\M$ is distinct from $\F_2$, then by Theorem \ref{sandwich} we can set ${\hat{G}(\sigma)\!\!=\!\!S(\sigma)}$.
	
	(4) If $\sigma\ne \M$, then we also can set $\hat{G}(\sigma)=S(\sigma)$, i.e. any overgroup of $E(\Delta,R)$ of level $\sigma$ is pseudo-standard. Indeed, for $\sigma=R$ there is nothing to prove, it remains to consider the case $\sigma< \M$. Let $H$ be our overgroup, we verify that the conditions of Corollary \ref{tandemreduction} hold true. Let $(g,l)$ be a tandem, and let $g\in H$. We must verify that $\ovl{l}\in L(\ovl{\sigma})=L(0)$. 
	
	Assume that there is a root $\gamma\in \Phi\sm\Delta$ such that $\ovl{l^{\gamma}}\ne 0$. As we know, there exists an admissible pair of orthogonal roots $\alpha_1,\alpha_2\in\Delta$ such that $(\alpha_1,\gamma)=(\alpha_2,\gamma)=-1$. 
	
	By Lemma \ref{reduction} we have $\lev(\ovl{H})=(0)$. Hence if $\ovl{l}^{-\alpha_1}=0$, then we can argue as in the proof of Proposition \ref{field} to obtain a contradiction (in the case where $\ovl{l}^{-\alpha_1}=0$ we did not use the assumption that the field is distinct from~$\F_2$). Let $\ovl{l}^{-\alpha_1}\ne 0$.
	
	We build a bitandem with parameter special with respect to the pair $\alpha_1\!,\alpha_2$:
	$$
	(g_1(t),l_1(t))=\big(\leftact{g}{(x_{\alpha_1}(tl^{-\alpha_2})x_{\alpha_2}(- tl^{-\alpha_1}))},\leftact{g}{(tl^{-\alpha_2}e_{\alpha_1}- tl^{-\alpha_1}e_{\alpha_2})}\big).
	$$
	Next, we build the following tandem that depend on $t\in R$, 
	$$
	(g_2,l_2)=\big(\leftact{g_1(t)}{x_{\alpha_1}(1)},\leftact{g_1(t)}{e_{\alpha_1}}\big).
	$$
	
	Regardless of $t$, by Lemma~\ref{bitandemsinparabolic} we have $g_1(t)\in P_{\alpha_1,\alpha_2}$, and $g_2\in H\cap U'_{\alpha_1,\alpha_2}\le S(\sigma)$. Then $l_2\in L(\sigma)$. Therefore, ${l_2^{\gamma+\alpha_1+\alpha_2}\in\sigma}$. On the other hand, as in the proof of the Proposition~\ref{field}, we can write an equation of the form $l_2^{\gamma+\alpha_1+\alpha_2}=at+bt^2$, where $a,b\in R$ are independent of~$t$; and from this proof it follows that $\ovl{a}\ne 0$, i.e. $a\in R^*$. Then by Nakayama's lemma the elements $at+bt^2$, where $t$ runs through $\M$, generate the ideal $\M$ (because their images generate $\M/\M^2$). Hence $\sigma=\M$, which contradicts the assumption.
	
	(5) To finish the last case, it suffices to consider the case where $R=\F_2$ and $\sigma=(0)$. Indeed, if $R$ is a local ring with nilpotent maximal ideal $\M$, $R/\M=\F_2$ and $\sigma=\M$, then we can set $$\hat{G}(\Phi,\Delta,R,\sigma)=\rho_{\M}^{-1}\big(\hat{G}(\Phi,\Delta,R/\M,(0))\big).$$
	
	(6) So it remains to show that in $G(D_4,\F_2)$ there is only one maximal subgroup containing $E(4A_1,\F_2)=G(4A_1,\F_2)$. This subgroup is $N_{G(\D_4,\F_2)}([G(4A_1,\F_2),G(4A_1,\F_2)])$.
	
	Suppose that $N$ is a maximal subgroup that contains $G(4A_1,\F_2)$. We observe that $(\Z/3\Z)^4\simeq [G(4A_1,\F_2),G(4A_1,\F_2)]\le N$. In the book \cite{ConwayAtlas} all the conjugacy classes of maximal subgroups in $G(D_4,\F_2)$ were listed. After we reject subgroups that have order not divisible by $3^4$, and also subgroups that have $3$-Sylow subgroup of order $3^4$ that is not isomorphic to $(\Z/3\Z)^4$, only one conjugacy class remains. This class consist of normalisers of subgroups that are isomorphic to $(\Z/3\Z)^4$. We must show that the subgroup $(\Z/3\Z)^4$ normalised by $N$ coincides with $[G(4A_1,\F_2),G(4A_1,\F_2)]$. It suffices to show that $N$ contains only one subgroup isomorphic to $(\Z/3\Z)^4$. Suppose the contrary, then the other subgroup (the one that is not necessaryly normal) is contained in some $3$-Sylow subgroup $S$ of the group $N$, the normal subgroup $(\Z/3\Z)^4$ is also contained in $S$ because it is normal. The order of $S$ is equal to $3^5$, and it is a $3$-Sylow subgroup in $G(D_4,\F_2)$. By \cite{ConwayAtlas} the group $G(D_4,\F_2)$ contains a subgroup that is isomorphic to the alternating group $A_9$. Hence~$S$ contains a subgroup isomorphic to $3$-Sylow subgroup of the group $A_9$. Now it could be obtained by elementary means that $S$ can not contain two distinct subgroups isomorphic to $(\Z/3\Z)^4$. This is a contradiction.
\end{proof}

In his next paper, the author plans to suggest a way to enlarge the subgroup $E(\Delta,R)$ so that pseudo-standard description became standard (i.e. the lower bound of a sandwich became a normal subgroup of an upper bound), and to show that for subsystems that have no component of type $A_1$ such an enlargement is not required.


\vspace{-1mm}

\end{document}